\tikzstyle{line} = [draw]
\tikzset{label/.style={draw=gray, ultra thin, rounded corners=.25ex, fill=gray!20,text width=4cm, text badly centered,  inner sep=2ex, anchor=east, minimum height=4em}}
\newcommand{\commentout}[1]{}
\newcommand{\weaks}{\overset{\ast}{\rightharpoonup}}
\newcommand{\R}{\mathbb{R}}
\newcommand{\ie}{\emph{i.e.}\;}
\newcommand {\al} {\alpha}
\newcommand {\eps}  {\varepsilon}
\newcommand {\dv}  { {\rm div} }
\newcommand {\cae} { {\mathcal E} }
\newcommand {\caf} { {\mathcal F} }
\newcommand {\f}   {\frac}
\newcommand {\p}   {\partial}
\newcommand{\beq}{\begin{equation}}
\newcommand{\eeq}{\end{equation}}
\newcommand{\diff}{\mathop{}\!\mathrm{d}} 
\DeclarePairedDelimiter{\norm}{\lVert}{\rVert}
\newtheorem{theorem}{Theorem}
\newtheorem{lemma}[theorem]{Lemma}
\newtheorem{remark}[theorem]{Remark}
\newtheorem{proposition}[theorem]{Proposition}
\title{From Vlasov equation to degenerate nonlocal Cahn-Hilliard equation}
\author{Charles Elbar\footnotemark[1]\thanks{Sorbonne Universit\'{e}, CNRS, Universit\'{e} de Paris, Inria, Laboratoire Jacques-Louis Lions (LJLL), F-75005 Paris, France } \thanks{email: charles.elbar@sorbonne-universite.fr}
\and Marco Mason\thanks{Sapienza Università di Roma, Dipartimento di Matematica "Guido Castelnuovo", Rome, Italy} \thanks{email: mason.1820840@studenti.uniroma1.it}
\and Beno\^it Perthame\footnotemark[1]  \thanks{email: benoit.perthame@sorbonne-universite.fr}
\and Jakub Skrzeczkowski\thanks{Institute of Mathematics Polish Academy of Sciences, Warsaw, Poland; Faculty of Mathematics, Informatics and Mechanics, University of Warsaw, Warsaw, Poland} \thanks{email: jakub.skrzeczkowski@student.uw.edu.pl}
}
\date{\today}
\begin{document}
\maketitle
\pagestyle{plain}
\pagenumbering{arabic}

\begin{abstract} 
We provide a rigorous mathematical framework to establish the hydrodynamic limit of the Vlasov model introduced in~\cite{takata2018simple} by Noguchi and Takata in order to describe phase transition of fluids by  kinetic equations. We prove that, when the scale parameter tends to 0, this model converges to a nonlocal Cahn-Hilliard equation with degenerate mobility. For our analysis, we introduce apropriate forms of the short and long range potentials which allow us  to derive Helmhotlz free energy estimates. Several compactness properties follow from the energy, the energy dissipation and kinetic averaging lemmas. In particular we prove a new weak compactness bound on the flux.  
\end{abstract} 
\vskip .7cm

\noindent{\makebox[1in]\hrulefill}\newline
2010 \textit{Mathematics Subject Classification.} 35B40; 35B45; 35Q92; 82B40; 92C15.  
\newline\textit{Keywords and phrases.} Vlasov equation; Degenerate Cahn-Hilliard equation; Nonlocal Cahn-Hilliard equation; Hydrodynamic limit. 
%
\section{Introduction}
\label{sec:intro}

We consider the following Vlasov-Cahn-Hilliard equation (VCH in short) 
\beq \begin{cases}
\eps^2\p_t f_\eps +\eps  \xi.\nabla_x f_\eps + \eps F_\eps.\nabla_\xi f_\eps = \varrho_\eps (t,x)  M(\xi)  - f_\eps, \qquad t \geq 0, \; x \in \R^d, \; \xi  \in \R^d,
\\[4pt]
\varrho_\eps (t,x)= \int_{\R^d} f_\eps (t,x, \xi) \diff \xi,
\end{cases}
\label{eq:V}
\eeq
with an initial data $f_{\eps}(0,x,\xi)=f^0(x, \xi) \geq 0$.   \noindent The unknown is the 
function
\begin{equation*}
f_{\eps} \equiv f_{\eps}(t,x,\xi), \quad t \in (0,T), \ x\in \R^d, \ \xi \in \R^d,
\end{equation*}
such that, for every infinitesimal volume $\diff x \diff \xi$ around the point $(x,\xi)$ in the phase space, the quantity $f_{\eps}(t,x,\xi) \diff x \diff \xi$ is the number of particles which have position $x$ and velocity $\xi$ at fixed time $t$. The small parameter $\eps> 0$ arises from physical dimensions of the system and we are interested in the limit when it tends to 0.
Following~\cite{takata2018simple}, the force field $F_\eps(t,x)$ is decomposed as long-range attractive and short-range repulsive
\begin{equation} \label{eq:potential}
F_\eps = F_\eps^L+F_\eps^S, \qquad F_\eps^{L,S}(t,x) = -\nabla \Phi_\eps^{L,S}(t,x).
\end{equation}
We define the convolution in the space variable as $f\star g =  \int_{\R^d} f(y) g(x-y) \diff y$ and set
\[
\Phi_{\alpha, \eps}^S(t,x) =\frac{1}{\al^{2}} \omega^S \star  \omega^S \star \varrho_\eps,
\]
where $\omega^S\geq 0$ is a function that may be thought of as a centered Gaussian. We use a double convolution in order to enforce positivity of the corresponding operator as it appears in energy considerations. We assume that $\omega^S$ satisfies  
\begin{equation} \label{as:omega_S}
\int_{\R^d}\omega^S(y) \diff y =1 , \quad \int_{\R^d}y \omega^S(y) \diff y = 0, \quad \int_{\R^d}|y|^2 \omega^S(y) \diff y < \infty.
\end{equation}
The long-range potential is of the form 
\begin{equation} \label{as:alpha}
\Phi_{\alpha, \eps}^L(t,x) = -\frac{1}{\al^{2}} \omega^L_{\al}\star \omega^S \star \omega^S \star \varrho_\eps, 
\end{equation}
where $\omega^L_\alpha(x) = \frac{1}{\alpha^d} \, \omega^L\left(\frac{x}{\alpha}\right)$ may be thought of as a high temperature Gaussian and $\omega^L$ is a smooth, nonnegative, symmetric, compactly supported function such that, for some $\delta>0$,
\begin{equation}
 \int_{\R^d}\omega^L(y) \diff y =1, \quad \int_{\R^d} y \omega^L(y) \diff y =0,  \quad \int_{\R^d}   y_i y_j \omega^L \diff y = \delta_{i,j}\, \delta, \quad \int_{\R^d} \omega^L(y) |y|^3 \diff y < \infty.
 \label{as:omega}
\end{equation}

\noindent The equilibrium distribution $M(\xi) \geq 0$ is a Maxwellian that we normalize as
\begin{equation} \label{eq: maxwellian}
 M(\xi) := \left(\frac{1}{2\pi D}\right)^{d/2} \exp\left(-\f{|\xi|^2}{2D}\right),
\end{equation}
and we have, for $i=1,\dots, d$,
\begin{equation} \label{eq: maxwellian_properties}
\int_{\R^d} M(\xi)\diff \xi=1, \qquad \int_{\R^d} \xi_i M(\xi)\diff \xi=0, \qquad
\int_{\R^d} \xi_i^2 M(\xi) \diff \xi=D<\infty, 
\end{equation}
\medskip
so that $D$ can be interpreted as the diffusion coefficient. 

\subsection{The macroscopic limit}
%
 The right-hand side of Equation~\eqref{eq:V} is a relaxation term that conserves mass but neither momentum nor energy since we aim at using a diffusive scaling. Formally one can guess that
\begin{equation}
f_{\eps}(t,x,\xi)\to \varrho(t,x)M(\xi),\quad \text{as $\eps\to 0.$}    
\label{eq:asympf}
\end{equation}
The mass conservation equation on $\varrho_{\eps}$  is obtained by integrating Equation~\eqref{eq:V} with respect to~$\xi$ against~$1$,
\beq
\p_t \varrho_\eps (t,x) + \dv J_\eps (t,x) = 0, \qquad  J_\eps (t,x) =   \int_{\R^d} \f \xi \eps f_\eps (t,x, \xi) \diff \xi .
\label{eq:Mass}
\eeq
Then,  integrating  against $\xi$, we obtain the flux equation 
\beq
\eps^2 \p_t  J_\eps (t,x) + \nabla_{x}\cdot  \int_{\R^d} \xi \otimes \xi   f_\eps (t,x, \xi) \diff \xi -  F_\eps \varrho_\eps = - J_\eps (t,x).
\label{eq:Jeps}
\eeq
Combined with \eqref{eq:asympf}, this flux equation allows us to identify the limit of $J_\eps $ and to prove that as $\eps,\alpha\to 0$, the macroscopic densities tend to a solution of a degenerate nonlocal Cahn-Hilliard equation type. More precisely, we have the

\begin{theorem}[Limit $\varepsilon \to 0$] \label{thm:limit_eps}
With the assumptions and notations \eqref{eq:potential}--\eqref{eq: maxwellian}, let $\alpha=\eps$. Let $f^{0}$ be a non-negative distribution that satisfies~\eqref{initial_data}-\eqref{initial_helmholtz} and let $f_{\eps}$ be a solution of~\eqref{eq:V} with initial condition $f^{0}$. Then, we can extract a subsequence (not relabelled) such that $\varrho_\eps \to \varrho$ in $L^p_{t}L^{1}_{x}$ strongly for $1\le p<\infty$ where $\varrho$ solves in the distributional sense the equation
\begin{equation}\label{limit_equation}
\p_t \varrho -D\Delta \varrho - \dv (\varrho \nabla \Phi)=0, \qquad \Phi = -\delta\Delta[\omega^{S}\star\omega^{S}\star\varrho],
\end{equation}
with initial data $\varrho^0= \int_{\R^d} f^0 (x, \xi) \diff \xi$.
\end{theorem}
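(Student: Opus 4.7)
My proof rests on three ingredients: a Helmholtz free energy estimate that gives uniform bounds, kinetic averaging lemmas that upgrade these bounds to strong compactness of $\varrho_\varepsilon$, and a Taylor expansion of the scaled long-range potential that extracts the Laplacian appearing in \eqref{limit_equation}.

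First, I would multiply \eqref{eq:V} by $D(\log f_\varepsilon - \log M) + \Phi_\varepsilon$ and integrate over $\R^d_x \times \R^d_\xi$. The double-convolution structure \eqref{as:alpha} is chosen precisely so that the potential energy
\[
\mathcal{F}_\varepsilon(t) = \frac{1}{2\alpha^2}\int_{\R^d} |\omega^S \star \varrho_\varepsilon|^2 \diff x - \frac{1}{2\alpha^2}\int_{\R^d}(\omega^L_\alpha \star \omega^S \star \varrho_\varepsilon)\,(\omega^S \star \varrho_\varepsilon)\diff x
\]
is quadratic with a symmetric kernel, and combines with the kinetic relative entropy into a dissipative Helmholtz functional. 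Its dissipation controls $\varepsilon^{-2}$ times the squared deviation of $f_\varepsilon$ from the local Maxwellian $\varrho_\varepsilon M$, yielding uniform entropy, mass, and kinetic energy bounds. Then, viewing \eqref{eq:V} as $\varepsilon^2 \partial_t f_\varepsilon + \varepsilon \xi \cdot \nabla_x f_\varepsilon = \varrho_\varepsilon M - f_\varepsilon - \varepsilon F_\varepsilon \cdot \nabla_\xi f_\varepsilon$ and applying a velocity-averaging lemma to moments of $f_\varepsilon$ produces fractional Sobolev regularity in $(t,x)$ for $\varrho_\varepsilon$; combined with the $L\log L$ bound this yields strong compactness in $L^p_t L^1_x$ for all $1 \leq p < \infty$. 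In parallel, \eqref{eq:Jeps} decomposes $J_\varepsilon$ into a stress divergence, the product $F_\varepsilon \varrho_\varepsilon$, and an $O(\varepsilon^2)$ time derivative, and bounding each term via the free energy gives the new weak compactness of $J_\varepsilon$ advertised in the abstract.

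To identify the limit in $\partial_t \varrho + \dv J = 0$, I would pass to the limit in \eqref{eq:Jeps}: the stress tensor $\int_{\R^d} \xi \otimes \xi\, f_\varepsilon \diff \xi$ converges to $D\varrho\,\mathrm{Id}$ by \eqref{eq: maxwellian_properties} once $f_\varepsilon \to \varrho M$, while the $\varepsilon^2 \partial_t J_\varepsilon$ contribution disappears. Writing
\[
\Phi_\varepsilon = \frac{1}{\alpha^2}(\delta_0 - \omega^L_\alpha)\star(\omega^S \star \omega^S \star \varrho_\varepsilon),
\]
the Taylor expansion $(\omega^L_\alpha \star g)(x) = g(x) + \tfrac{\alpha^2 \delta}{2}\Delta g(x) + O(\alpha^3)$, justified by the moment conditions \eqref{as:omega} with $g = \omega^S \star \omega^S \star \varrho_\varepsilon$, identifies $\Phi$ in \eqref{limit_equation}. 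The strong convergence of $\varrho_\varepsilon$ together with the smoothing by $\omega^S \star \omega^S$ makes $\nabla \Phi_\varepsilon$ converge strongly enough in $L^1_{t,x,\mathrm{loc}}$ to pass to the limit in the nonlinear term $\varrho_\varepsilon \nabla \Phi_\varepsilon$.

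The main obstacle is precisely this last product. The prefactor $\alpha^{-2}=\varepsilon^{-2}$ in $\Phi_\varepsilon$ is formally singular, and only the exact cancellation between $\Phi_\varepsilon^S$ and $\Phi_\varepsilon^L$ together with the vanishing first moment of $\omega^L$ in \eqref{as:omega} extracts a finite limit. Reconciling this delicate cancellation with the need for \emph{strong} convergence of $\varrho_\varepsilon$ (in order to pair it against the only weakly converging flux quantities), and quantitatively controlling the Taylor remainder via the third-moment bound in \eqref{as:omega}, is the crux of the argument and the reason why a sharp averaging lemma together with the structural assumption \eqref{as:alpha} are both indispensable.
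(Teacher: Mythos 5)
Your overall architecture (Helmholtz free energy dissipation, velocity averaging for strong compactness of $\varrho_\eps$, Taylor expansion of the $\alpha^{-2}$-scaled potential, passage to the limit in the moment equations) matches the paper's, and your quadratic-form functional is the paper's \eqref{def:Henergy} rewritten. But there is a genuine gap exactly where the paper does its hardest work: the uniform $L^1$ bound and weak $L^1$ compactness of the flux. Your suggestion to obtain these by ``bounding each term'' of \eqref{eq:Jeps} via the free energy would fail: the stress $\int_{\R^d}\xi\otimes\xi\, f_\eps\diff\xi$ is bounded only in $L^\infty_t L^1_x$, so \eqref{eq:Jeps} controls $J_\eps$ only one spatial derivative below $L^1$, i.e.\ in a space of distributions, which gives neither an $L^1$ bound nor equi-integrability. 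The naive direct estimate fails too: $|J_\eps|\le \eps^{-1}\int_{\R^d}|\xi|\,|f_\eps-\varrho_\eps M|\diff\xi$, combined with Cauchy--Schwarz and Csisz\'ar--Kullback, only yields $O(\eps^{-1/2})$. The paper's key new device, Lemma~\ref{lem:pointwise_J}, splits the $\xi$-integral according to $|\log(f_\eps/(\varrho_\eps M))|\gtrless |\xi|/r$, exploiting that the dissipation integrand is linear in $|f_\eps-\varrho_\eps M|$ where the ratio is far from $1$ and quadratic where it is close, to get $|J_\eps|\le r\eps\|\mathcal D_\eps\|_{L^1_\xi}+C(r)\,\varrho_\eps^{1/2}\|\mathcal D_\eps\|_{L^1_\xi}^{1/2}$; this, together with the $\varrho\log\varrho$ and $|x|$-moment bounds, yields the $L^1$ bound, Dunford--Pettis weak compactness, and the $\log^{1/2}\log^{1/2}$ refinement of Proposition~\ref{lem:estimate_Jeps}. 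The same splitting, with threshold $|\xi|^2/r$, is what proves Lemma~\ref{lem:xi_tensor_xi_term}; your assertion that the stress converges to $D\varrho\,\mathrm{Id}$ ``once $f_\eps\to\varrho M$'' is insufficient, since $\|f_\eps-\varrho_\eps M\|_{L^1_\xi}=O(\eps)$ together with a bounded second moment does not control the tail $\int_{|\xi|>R}|\xi|^2 f_\eps\diff\xi$.

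Your averaging step also has two holes. First, $f_\eps$ is bounded only in $L^1\cap L\log L$, not in $L^2$, so the averaging lemma adapted to the scaled operator $\eps\p_t+\xi\cdot\nabla_x$ (Lemma~\ref{lem:masmoudi}) cannot be applied to $f_\eps$ or its moments directly; the paper first renormalizes, applying the lemma to $\beta_\nu(f_\eps)=f_\eps/(1+\nu f_\eps)$, then removes $\beta_\nu$ using the entropy bound (Lemma~\ref{lem:compact_beta}) and removes the velocity cutoff using the $|\xi|^2$-moment. Second, under the diffusive scaling the lemma delivers translation equicontinuity in $x$ only, uniformly in $\eps$ --- not ``fractional Sobolev regularity in $(t,x)$'', since time regularity coming from the kinetic operator degenerates as $\eps\to 0$. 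Equicontinuity in time is recovered separately from $\p_t\varrho_\eps=-\dv J_\eps$ and the uniform $L^1_{t,x}$ bound on $J_\eps$ via a mollification argument (Lemma~\ref{lem:frechet-kolmogorov}); thus the time compactness, and hence the strong convergence $\varrho_\eps\to\varrho$ in $L^p_tL^1_x$ on which your treatment of the product $\varrho_\eps\nabla\Phi_\eps$ relies, again hinges on the flux lemma you skipped. You also omit the tightness in $x$ (estimate~\ref{thm_eps:est7}) needed to upgrade local to global $L^1$ compactness via Fr\'echet--Kolmogorov. (One small point in your favor: your Taylor coefficient $\tfrac{\alpha^2\delta}{2}\Delta g$ is the correct second-order coefficient under \eqref{as:omega}, which would produce $\Phi=-\tfrac{\delta}{2}\Delta[\omega^S\star\omega^S\star\varrho]$; note that the paper's own expansion in the proof of Lemma~\ref{lem:potential_boundedness} drops this factor $1/2$.)
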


 In fact,~\cite{takata2018simple} obtains formally a more complete description which we cannot prove at the moment (see Section~\ref{conclusion}).

\begin{remark}
$\bullet$  Writing formally $\Delta\varrho=\dv(\varrho\nabla\log(\varrho))$,  this term can be added to the potential so as to obtain a kind of Cahn-Hilliard equation. 
\\ $\bullet$ Different scaling between $\alpha$ and $\eps$ can be considered, $\alpha$ constant is also possible
\\ $\bullet$  Uniqueness can be proved in the class of uniformly bounded densities, see Appendix~\ref{app:uniqueness}. 
\end{remark}

\subsection{Contents of the paper}

In Section~\ref{sec:EE}, we collect various uniform estimates $\eps$.  Section~\ref{sec:limit_2} is devoted to passing to the limit $\eps\to 0$. Some open problems are drawn in Section~\ref{conclusion}. The Appendix contains different mathematical tools and lemmas used throughout the proofs.

\subsection{Literature review and relevancy of the system}

\paragraph{Phase transitions in fluids}
 In~\cite{takata2018simple}, Noguchi and Takata consider a kinetic model to capture the dynamics of phase transition for the Van der Waals fluid. The model reads as follows

\begin{align*}
&\p_{t}f+\xi\cdot\nabla_{x}f+F\cdot\nabla_{\xi}f=A(\varrho)(\varrho M-f), \quad A(\varrho)>0,\\
&F=F^{1}+F^{2},\quad F^{1,2}=-\nabla\Phi^{1,2},\\
&\Phi^{1}=\begin{cases}\varrho-\omega^{L}\star\varrho\\ \text{or }-\kappa\Delta\varrho\end{cases},\quad \Phi^{2}=-C_{1}\log(1-\varrho)+\f{\varrho}{1-\varrho}-C_{2}\varrho,\quad C_{1}, C_{2}\in \R, 
\end{align*}
for some kernel $\omega^{L}$ and $\kappa>0$. $\Phi^{1}$ is a combination of short range repulsion and long range attraction. $\Phi^{2}$ is a short range interaction potential. 

The authors state that full details of intermolecular collisions are not considered and that the collision term on the right-hand side plays just a thermal bath role. However, they show that the system exhibits the essential features of phase transition dynamics, both theoretically and
numerically. By placing themselves in the framework of the strong interaction, they find a rescaling of the first equation of the system and obtain Equation~\eqref{eq:V} of VCH. Then, setting $A\equiv 1$ and letting $\eps\to 0$ they obtain formally that in the limit (that we refer to as the hydrodynamic limit), the macroscopic density $\varrho$ satisfy

\begin{equation*}
\p_{t}\varrho-\Delta\varrho-\dv(\varrho\nabla(\Phi^{1}+\Phi^{2}))=0.    
\end{equation*}

Noting that $\Delta\varrho=\dv(\varrho\nabla\log(\varrho))$ they obtain the Cahn-Hilliard equation with degenerate mobility 

\begin{equation*}
\p_{t}\varrho-\dv(\varrho\nabla(\Phi^{1}+\widetilde{\Phi}^{2}))=0,\quad \widetilde{\Phi}^{2}=\Phi^{2}+\log(\varrho).    
\end{equation*}

This model presents several mathematical difficulties. First of all, we are not aware of any existence result concerning the Vlasov equation when the potential $\Phi$ is a function of the density $\varrho$. In the Vlasov-Poisson system, one has $\Phi=\Delta^{-1}\varrho$ and there is a  gain of two derivatives. For the existence of classical solutions for Vlasov-Poisson we refer to~\cite{MR1115549,MR1165424,MR1132787,MR1200156}. A second difficulty comes from the rigorous passage to the limit. Indeed, the bound provided by the energy do not provide enough compactness. For instance, one cannot apply the averaging lemma~\ref{lem:masmoudi} on this system because the functions are not bounded in $L^{1}$ uniformly in $\eps$.   
For these reasons, we add the convolutions $\omega^{S}$ in~\eqref{eq:V}--\eqref{as:alpha} and provide a rigorous mathematical framework to establish the hydrodynamic limit of this model when $\Phi^{2}=0$. It would be possible to prove a similar result when $\Phi^{2}=\omega^{S}\star f'(\varrho)$ where $|f(\varrho)|\le C|\varrho\log\varrho|$ for $C$ small enough.  

Our work also provides a generic model to obtain different nonlocal and degenerate equations of Cahn-Hilliard/thin-film type as the hydrodynamic limit of kinetic models. 
\noindent For other kinetic models modeling phase transitions, we refer to \cite{giovangigli,Grmela1974OnTA,kobayashi2012}.

\paragraph{Kinetic theory}

\vspace{0.3cm} \noindent The main purpose of kinetic theory is to provide a description of the evolution of a gas or plasma, and more generally a many-particle system made up of $N$ similar individual elements, in the limit when $N$ tends to infinity which corresponds to the so-called thermodynamical limit. 

\vspace{0.3cm} \noindent In the kinetic theory, the density of particles is described with the probability measure
\begin{equation*}
f \equiv f(t,x,\xi), \quad t\ge 0, \ x\in \R^d, \ \xi \in \R^d,
\end{equation*}
such that, for every infinitesimal volume $\diff x \diff \xi$ around the point $(x,\xi)$ in the phase space, the quantity $f(t,x,\xi) \diff x \diff \xi$ is the number of particles which have position $x$ and velocity $\xi$ at fixed time $t$. For this reason, $f$ is a nonnegative function and integrable in both space and velocity variables, but it is not directly observable. Nevertheless, at each point of the domain it provides all measurable macroscopic quantities which can be expressed in terms of microscopic averages:
\begin{align*}
\varrho(t,x)&=\int_{\R^d} f(t,x,\xi) \diff \xi \quad \quad \mbox{(macroscopic density)}, \\
J(t,x)&= \int_{\R^d} \xi f(t,x,\xi) \diff \xi \quad \quad \mbox{(flux)}.
\end{align*}
It is clear that such a statistical description makes sense only with a very large number of particles, and as a consequence, all kinetic equations are expected to approximate the true dynamics of gases just in the thermodynamical limit. Rescaling the time and space with a parameter $\eps$, \ie $t\to\eps^{2}t$, $x\to\eps x$ and sending $\eps\to 0$ is called the hydrodynamic limit. It allows us to find a rigorous derivation of macroscopic models from a microscopic description of matter. For hydrodynamics on the Vlasov-Poisson-Fokker-Planck system, we refer to~\cite{MR2664460,MR2139941}.\\ 

\noindent Our aim is to obtain an equation on the macroscopic density and to relate it to a known model that has applications in fluid dynamics or biology, \ie the Cahn-Hilliard equation.

\paragraph{The Cahn-Hilliard equation}

Equation~\eqref{limit_equation} is an example of a Cahn-Hilliard type equation that is widely used nowadays to represent phase transitions in fluids and living tissues~\cite{wise_three-dimensional_2008,Frieboes-2010-CH,Garcke-2016-CH-darcy,Garcke-2018-multi-CH-darcy,Frigeri-CH-Darcy,ebenbeck_cahn-hilliard-brinkman_2018,Ebenbeck-Brinkman,perthame_poulain,elbar-perthame-poulain,Lowengrub-bridging}. Originally introduced in the context of materials sciences \cite{Cahn-Hilliard-1958, CAHN1961795}, it is currently applied in numerous fields, including complex fluids, polymer science, and mathematical biology.  For the overview of mathematical theory, we refer to \cite{MR4001523}.\\

Cahn-Hilliard equation takes the form of 
\begin{equation}
    \p_t \varrho = \dv\left(b(\varrho)\nabla \left(\Phi(\varrho)-\delta \Delta \varrho \right)\right) \to \begin{cases}
        \p_t \varrho &= \dv\left(b(\varrho)\nabla \mu \right),\\
        \Phi &= -\delta \Delta \varrho +\Phi^{I}(\varrho),
    \end{cases}
    \label{eq:Cahn-Hilliard}
\end{equation}
where $\varrho$ represents the relative density of one component $\varrho=\varrho_1/(\varrho_1+\varrho_2)$, $b(\varrho)$ is the mobility, $\Phi^{I}$ is the interaction potential while $\Phi$ is the quantity of chemical potential.\\

We obtain a nonlocal version of the Cahn-Hilliard equation. The nonlocality comes from the convolution of the Laplace operator with a smooth kernel $\omega^{S}$ concentrated around the origin. There is a different possibility to approximate this operator nonlocally, we refer for instance to~\cite{MR4198717,MR4408204}, where the authors prove the convergence of a nonlocal Cahn-Hilliard equation with constant mobility to a local Cahn-Hilliard equation. In our case, because of the degenerate mobility, it is not clear that we can pass from the nonlocal Equation~\eqref{limit_equation} to a local one by sending $\omega^{S}$ to a Dirac mass.

\section{Entropy, energy, and uniform estimates}
\label{sec:EE}
The analysis relies on various uniform bounds in $\eps$ which use an initial data that satisfies
\begin{equation} \label{initial_data}
\int_{\R^{2d}} (1+|x|+|\xi|^2 +|\log f^0|)  f^0(x,\xi) \diff x \diff \xi < +\infty, 
\end{equation}
\begin{equation} \label{initial_helmholtz}
\sup_{\al \leq 1} \; \frac {1}{\alpha^{2}}  \int_{\R^{2d}} \omega^L_\alpha (y) [\varrho^0 \star \omega^S (x)-\varrho^0 \star \omega^S (x- y)]^2 \diff x \diff y < +\infty.
\end{equation}

Then, we begin with proving the bounds 
\begin{theorem}[Uniform estimates]\label{thm:bounds_epsilonto0}
With the assumptions \eqref{initial_data} and \eqref{initial_helmholtz}, the following uniform estimates hold for $\eps \in (0,1)$:
\begin{enumerate}[label=(\Alph*)]
    \item  \label{thm_eps:est1} $\{f_\varepsilon\}$ in $L^{\infty}_{t} L^1_{x,\xi}$ and $\{\varrho_{\eps}\}$ in $L^{\infty}_t L^1_x$,
    \item \label{thm_eps:est2} $\{f_{\varepsilon} |\log(f_{\varepsilon})|\}$ and $\{f_{\varepsilon} \, |\xi|^2\}$ in $L^{\infty}_{t} L^1_{x,\xi}$,
    \item \label{thm_eps:est3} $\{\varrho_{\varepsilon} |\log(\varrho_{\varepsilon})|\}$ in $L^{\infty}_{t} L^1_{x}$,
    \item \label{thm_eps:est4} $\left\{
    \frac{(\varrho_{\varepsilon} M - f_{\varepsilon}) \, (\log(\varrho_{\varepsilon} M) - \log(f_{\varepsilon}))}{\varepsilon^2}\right\}$ in $L^1_{t,x,\xi}$,
    \item \label{thm_eps:est5} $\left\{
    \frac{\varrho_{\varepsilon} M - f_{\varepsilon}}{\varepsilon}\right\}$ in $L^1_{t,x,\xi}$,
    \item \label{thm_eps:est6} $\{ J_{\varepsilon}\}$ and $\{ J_{\varepsilon}\log^{1/2}\log^{1/2} \max(J_{\varepsilon},e) \}$ in $L^1_{t,x}$,
    \item \label{thm_eps:est7} $\{f_{\eps} |x|\}$, $\{\varrho_{\eps}\, |x|\}$ in $L^{\infty}_t L^1_{x,\xi}$ and $L^{\infty}_t L^1_{x}$ respectively.
\end{enumerate}
Moreover, $\{\varrho_{\eps}\}$ and $\{ J_{\varepsilon}\}$ are weakly compact in $L^1_{t,x}$.
\end{theorem}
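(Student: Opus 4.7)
The plan is to derive the estimates in sequence: mass from integrating the equation, entropy and moments from a combined H-theorem/Helmholtz free energy identity, then the flux bounds from Cauchy--Schwarz, and finally tightness plus equi-integrability for weak $L^1$ compactness.

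\textbf{Mass and entropy dissipation.} Integrating \eqref{eq:V} in $\xi$ gives \eqref{eq:Mass}; a further integration in $x$ yields conservation of the total mass, hence \ref{thm_eps:est1}. Multiplying \eqref{eq:V} by $1+\log f_\eps$ and integrating by parts, using that the transport and force terms are in divergence form in $(x,\xi)$, gives the H-theorem identity
\[
\frac{\diff}{\diff t}\int f_\eps \log f_\eps \diff x \diff \xi = -\frac{1}{\eps^2}\int (\varrho_\eps M - f_\eps)(\log(\varrho_\eps M)-\log f_\eps)\diff x \diff \xi \le 0,
\]
since $(a-b)(\log a - \log b)\ge 0$. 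Time-integrating this identity yields \ref{thm_eps:est4} directly. The bound on $f_\eps|\log f_\eps|$ in \ref{thm_eps:est2} then follows from the classical inequality $-f\log f\le C+|x|f+|\xi|^2f+\sqrt{f}\,e^{-|\xi|^2/4D}$, provided the $|\xi|^2$ moment and $|x|$ moment are controlled, which is done next.

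\textbf{Helmholtz free energy.} For the $|\xi|^2$-moment bound I would introduce the free energy
\[
\mathcal{H}[f_\eps](t)= \int f_\eps \log f_\eps \diff x \diff \xi + \int f_\eps\tfrac{|\xi|^2}{2D}\diff x \diff \xi + \tfrac{1}{2}\int \varrho_\eps(\Phi^S_{\al,\eps}+\Phi^L_{\al,\eps})\diff x.
\]
The composite structure $\omega^S\star\omega^S$ and the symmetry of $\omega^L$ allow the potential part to be recast as the non-negative quadratic form
\[
\frac{1}{2\al^2}\iint \omega^L_\al(y)[\omega^S\star\varrho_\eps(x)-\omega^S\star\varrho_\eps(x-y)]^2\diff x \diff y,
\]
which is finite at $t=0$ by \eqref{initial_helmholtz}. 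Computing $\tfrac{\diff}{\diff t}\mathcal{H}$, the transport term cancels against the force term (this is the point of the factorization) and only the non-negative relaxation dissipation remains. This delivers the kinetic moment in \ref{thm_eps:est2} and, via Jensen's inequality applied to $\varrho_\eps=\int f_\eps\diff\xi$ against the Gaussian $M$, also \ref{thm_eps:est3}.

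\textbf{Flux bounds.} For \ref{thm_eps:est5} I would combine the elementary inequality $(a-b)^2 \le \tfrac12 (a-b)(\log a - \log b)(a+b)$ with Cauchy--Schwarz to obtain an $L^1_{t,x,\xi}$ bound of $(\varrho_\eps M-f_\eps)/\eps$ from \ref{thm_eps:est4} and the $L^\infty_t L^1$ bound on $f_\eps+\varrho_\eps M$. For \ref{thm_eps:est6}, exploiting $\int \xi M \diff \xi = 0$ one writes $J_\eps = \int \tfrac{\xi}{\eps}(f_\eps - \varrho_\eps M) \diff \xi$, and Cauchy--Schwarz against $|\xi|^2 M$ produces the $L^1_{t,x}$ bound. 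The main obstacle is the $\log^{1/2}\log^{1/2}$ refinement: this demands a more delicate splitting $|\xi|\le R(x,t)$ versus $|\xi|>R(x,t)$, with $R$ tuned to a slowly growing function of $\varrho_\eps$ so that the tail is controlled by the $|\xi|^2$-moment while the bulk is controlled by the dissipation; this is essentially a De la Vall\'ee Poussin argument calibrated to the entropy bound \ref{thm_eps:est3}.

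\textbf{Space moment and weak compactness.} For \ref{thm_eps:est7}, testing \eqref{eq:V} against $|x|$ gives $\tfrac{\diff}{\diff t}\int f_\eps|x|\diff x \diff\xi = \int \tfrac{x}{|x|}\cdot J_\eps \diff x$, which is in $L^1_t$ thanks to \ref{thm_eps:est6}. The bound for $\varrho_\eps|x|$ follows by integration in $\xi$. Finally, Dunford--Pettis gives the weak compactness: equi-integrability of $\{\varrho_\eps\}$ in $L^1_{t,x}$ comes from \ref{thm_eps:est3} via De la Vall\'ee Poussin with the superlinear $\Psi(u)=u|\log u|$, and tightness from \ref{thm_eps:est7}; weak compactness of $\{J_\eps\}$ follows analogously, with equi-integrability provided by the $\log\log$ upgrade in \ref{thm_eps:est6} and tightness inherited from the same $|x|$-moment computation combined with the $L^1$ control of $(\varrho_\eps M-f_\eps)/\eps$. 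The hardest step is clearly the log-log refinement of \ref{thm_eps:est6}, since the rest of the argument is driven by the single free energy identity and standard functional-analytic consequences.
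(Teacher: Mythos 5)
There is a genuine gap at the flux estimate \ref{thm_eps:est6}, which is the heart of the theorem (and the paper's main new technical point); both of the routes you propose fail. The plain Cauchy--Schwarz against $|\xi|^2 M$ bounds $|J_\eps|$ by $\varrho_\eps^{1/2}$ times the $\chi^2$-type quantity $\bigl(\frac{1}{\eps^2}\int \varrho_\eps M\,|f_\eps/(\varrho_\eps M)-1|^2\diff\xi\bigr)^{1/2}$, and this is \emph{not} controlled by the dissipation $\|\mathcal{D}_\eps\|_{L^1_\xi}$: setting $y=f_\eps/(\varrho_\eps M)$, one has $(y-1)^2\gg (y-1)\log y$ as $y\to\infty$, so there is no pointwise comparison on the set where $f_\eps\gg\varrho_\eps M$. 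Your fallback splitting $|\xi|\le R$ versus $|\xi|>R$, with the tail controlled by the $|\xi|^2$-moment, fails for a different reason: $J_\eps$ carries the prefactor $1/\eps$, so the tail contributes $\frac{1}{\eps R}\int|\xi|^2 f_\eps\diff\xi$, which is unbounded as $\eps\to 0$ for any fixed $R$, while taking $R\sim\eps^{-1}$ destroys the bulk term; only dissipation-based quantities carry the compensating $\eps^2$. The paper's Lemma~\ref{lem:pointwise_J} instead splits according to the size of $\log(f_\eps/(\varrho_\eps M))$ compared with $|\xi|/r$: the far region is estimated directly by $r\,\eps\,\|\mathcal{D}_\eps\|_{L^1_\xi}$, and on the near region a Cauchy--Schwarz weighted by $B(\xi)=\frac{|\xi|}{r(\exp(|\xi|/r)-1)}$, combined with the concavity inequality $y-1\le\log(y)\,\frac{A-1}{\log A}$ for $1\le y\le A=\exp(|\xi|/r)$, converts the square into the dissipation at the price of the factor $C r^{-d}\exp(2C_M/r^2)$. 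The $\log^{1/2}\log^{1/2}$ refinement then comes from optimizing $r$ pointwise in $(t,x)$ as a function of $\varrho_\eps/\|\mathcal{D}_\eps(t,x,\cdot)\|_{L^1_\xi}$ (Appendix~\ref{ap:loglog}), not from a De la Vall\'ee Poussin argument in $\xi$. Since \ref{thm_eps:est6} also feeds your $|x|$-moment \ref{thm_eps:est7}, the negative-part control of $\log f_\eps$ in \ref{thm_eps:est2}, and the equi-integrability and tightness of $\{J_\eps\}$, this gap propagates through most of the remaining claims.

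Two further points. The ``H-theorem identity'' you state first is false: multiplying \eqref{eq:V} by $1+\log f_\eps$ gives only $\eps^2\frac{\diff}{\diff t}\int f_\eps\log f_\eps\diff x\diff\xi=\int(\varrho_\eps M-f_\eps)\log f_\eps\diff x\diff\xi$ (Proposition~\ref{prop:entropy_dissipation}); the discrepancy $\int(\varrho_\eps M-f_\eps)\log(\varrho_\eps M)\diff x\diff\xi=-\frac{1}{2D}\int|\xi|^2(\varrho_\eps M-f_\eps)\diff x\diff\xi$ is exactly the kinetic-energy production term and has no sign, so the full relative-entropy dissipation \ref{thm_eps:est4} only appears for the total free energy (entropy $+$ kinetic $+$ interaction), which your $\mathcal{H}$-paragraph does compute correctly --- \ref{thm_eps:est4} must be attributed there, not to the entropy alone. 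Second, your ordering hides a circularity that needs an absorption step: the free-energy identity controls $\int f_\eps\log f_\eps$ \emph{without} absolute value; upgrading to $f_\eps|\log f_\eps|$ and the $|\xi|^2$-moment requires the $|x|$-moment (Lemma~\ref{lem:bound_energy}), which requires the flux bound, which requires the time-integrated dissipation. The paper closes this loop via the pointwise bound $|J_\eps|\le 2\|\mathcal{D}_\eps\|_{L^1_\xi}+C\varrho_\eps$ (Proposition~\ref{lem:estimate_Jeps} plus Young's inequality), so that the $|x|$-moment costs only a fraction of the dissipation already present on the favorable side of \eqref{eq:energy_sans_int_term} and can be absorbed. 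The parts of your outline that are sound and coincide with the paper: \ref{thm_eps:est1}, \ref{thm_eps:est3} via Jensen, \ref{thm_eps:est5} via the logarithmic-mean inequality (equivalent to the Csisz\'ar--Kullback variant, Lemma~\ref{lem:kullback_ineq}), and the Dunford--Pettis endgame for $\{\varrho_\eps\}$.
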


\noindent The proof of these estimates uses a fundamental property of energy dissipation. To show that, we define the energy (kinetic+potential) and the Helmholtz free energy respectively as
\begin{align} \label{def:energy}
\cae (t) :=\int_{\R^{2d}} |\xi|^2 f_\eps \diff x \diff \xi  +\frac {1}{2\alpha^{2}} \int_{\R^{2d}} \omega^L_\alpha (y) [\varrho_\eps \star \omega^S (t,x)-\varrho_\eps \star \omega^S (t,x- y)]^2 \diff x \diff y,
\end{align}
\begin{align} \label{def:Henergy}
\caf (t) :=\int_{\R^{2d}} [2D\log(f_\eps)+|\xi|^2] f_\eps \diff x \diff \xi  +\frac {1}{2\alpha^{2}}  \int_{\R^{2d}} \omega^L_\alpha (y) [\varrho_\eps \star \omega^S (t,x)-\varrho_\eps \star \omega^S (t,x- y)]^2 \diff x \diff y.
\end{align}
The Helmholtz free energy satisfies the

\begin{theorem} [Free energy dissipation] \label{thm: helmholtz} 
The free energy $\ \caf (t)$ is dissipated as
\begin{equation} \label{helmholtz_dissipation}
\f {d}{dt} \caf (t)= -\f{2D}{\eps^2} \int_{\R^{2d}} \left[f_\eps-\varrho_\eps M(\xi)\right]\left[\log f_\eps-\log \left(\varrho_\eps M(\xi)\right)\right] \diff x \diff \xi =-2D\int_{\R^{2d}} {\mathcal D}_\eps \diff x \diff \xi,
\end{equation}
where the dissipation term is defined as 
\begin{equation} \label{eq:dissipation_term}
{\mathcal D}_\eps(t,x,\xi):=\f{1}{\eps^2} \left[f_\eps-\varrho_\eps M(\xi)\right]\left[\log f_\eps-\log \left(\varrho_\eps M(\xi)\right)\right] \geq 0.
\end{equation}
\end{theorem}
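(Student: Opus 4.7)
The plan is to differentiate the two pieces of $\caf(t)$ separately, use the Vlasov equation \eqref{eq:V} for the kinetic/entropy part and the mass conservation identity \eqref{eq:Mass} for the potential part, and show that the force-flux cross-term $2\int F_\eps\cdot J_\eps\,dx$ appears with opposite signs in the two contributions so that it cancels, leaving only the dissipation.

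\textbf{Step 1: the kinetic+entropy part.} Using
\[
\p_t f_\eps =-\tfrac{1}{\eps}\xi\cdot\nabla_x f_\eps -\tfrac{1}{\eps}F_\eps\cdot\nabla_\xi f_\eps +\tfrac{\varrho_\eps M-f_\eps}{\eps^2},
\]
I would compute $\tfrac{d}{dt}\int[2D\log f_\eps+|\xi|^2]f_\eps$ by multiplying the equation by $2D\log f_\eps+2D+|\xi|^2$ and integrating. The $x$-transport vanishes because $\log f_\eps\,\nabla_x f_\eps=\nabla_x(f_\eps\log f_\eps-f_\eps)$ and $|\xi|^2\xi\cdot\nabla_x f_\eps$ is a pure divergence; the $\xi$-transport also vanishes on the $\log f_\eps$ and constant terms, while the $|\xi|^2$ piece gives $-\tfrac{1}{\eps}\int |\xi|^2 F_\eps\cdot\nabla_\xi f_\eps=2\int F_\eps\cdot J_\eps\,dx$ after integration by parts in $\xi$ and using $\int\xi f_\eps\,d\xi=\eps J_\eps$. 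For the relaxation term I substitute $|\xi|^2=-2D\log M -dD\log(2\pi D)$, discard the $\xi$-constants (which integrate to zero against $\varrho_\eps M-f_\eps$), and insert a free $\log\varrho_\eps$ (again integrating to zero) to obtain exactly $-\tfrac{2D}{\eps^2}\int(f_\eps-\varrho_\eps M)(\log f_\eps-\log(\varrho_\eps M))\,dx\,d\xi$.

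\textbf{Step 2: the potential part.} Write $u=\varrho_\eps\star\omega^S$, $v=J_\eps\star\omega^S$, so that $\p_t u=-\dv v$ by \eqref{eq:Mass}. Differentiating the nonlocal double integral gives
\[
\tfrac{d}{dt}\caf_{\text{pot}} =-\tfrac{1}{\al^2}\int \omega^L_\al(y)\,[u(x)-u(x-y)]\,[\dv v(x)-\dv v(x-y)]\,dx\,dy.
\]
After a change of variables $x\mapsto x+y$ in the $\dv v(x-y)$ term and using the symmetry $\omega^L_\al(-y)=\omega^L_\al(y)$, the two contributions combine into $-\tfrac{2}{\al^2}\int\omega^L_\al(y)[u(x)-u(x-y)]\dv v(x)\,dx\,dy$. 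Integrating by parts in $x$ and integrating in $y$ using $\int\omega^L_\al=1$ yields
\[
\tfrac{d}{dt}\caf_{\text{pot}} =\tfrac{2}{\al^2}\int[\nabla u-\omega^L_\al\star\nabla u]\cdot v\,dx.
\]

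\textbf{Step 3: cancellation.} From \eqref{eq:potential}--\eqref{as:alpha}, $F_\eps=-\tfrac{1}{\al^2}[\nabla K-\omega^L_\al\star\nabla K]$ with $K=\omega^S\star\omega^S\star\varrho_\eps=\omega^S\star u$. Using the self-adjointness of convolution against the even kernel $\omega^S$ (equivalently, commutativity of $\omega^S$ with $\omega^L_\al$), one checks $\int[\nabla K-\omega^L_\al\star\nabla K]\cdot J_\eps\,dx=\int[\nabla u-\omega^L_\al\star\nabla u]\cdot v\,dx$, so $\tfrac{d}{dt}\caf_{\text{pot}}=-2\int F_\eps\cdot J_\eps\,dx$. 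Adding this to Step~1 proves \eqref{helmholtz_dissipation}; positivity of $\mathcal D_\eps$ is immediate from $(a-b)(\log a-\log b)\ge 0$.

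The main obstacle is Step~2: keeping track of the double convolutions in $\caf_{\text{pot}}$ and verifying that the symmetries of $\omega^L$ and $\omega^S$ produce exactly the factor $-2\int F_\eps\cdot J_\eps$ needed to cancel the flux-force work done by the transport term. Everything else is a careful but routine bookkeeping of integrations by parts; strictly speaking one first performs the computation on regularized/truncated solutions, since the manipulations assume enough integrability to justify the boundary terms.
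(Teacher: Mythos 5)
Your proof is correct and takes essentially the same route as the paper: the paper likewise differentiates the kinetic-plus-entropy part (Propositions \ref{prop:total_energy_dissipation} and \ref{prop:entropy_dissipation}, merged via the substitution $\log(\varrho_\eps M)=\log\varrho_\eps+C-\frac{|\xi|^2}{2D}$, which is your Maxwellian algebra) and cancels the force--flux work against the nonlocal term through the identity \eqref{second_term_derivative}, $2\int_{\R^d}\Phi_{\alpha,\eps}\,\p_t\varrho_\eps\,\diff x=\frac{d}{dt}$ of the interaction energy --- exactly your Steps 2--3 read in the opposite direction, using the same symmetry of $\omega^L_\alpha$ and (implicit) evenness of $\omega^S$. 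Like you, the paper carries out these manipulations formally on sufficiently integrable solutions.
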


\noindent This theorem  can be seen as a combination of relations for both the total energy and the entropy of the system.

\begin{proposition} [Total energy dissipation] \label{prop:total_energy_dissipation}
The total energy $\ \cae (t)$ is dissipated as
\begin{equation} \label{energy_dissipation}
\f {d}{dt} \cae (t)= \f{1}{\eps^2}  \int_{\R^{2d}} |\xi|^2 \left[ \varrho_\eps M(\xi)- f_\eps \right] \diff x \diff \xi.
\end{equation}
\end{proposition}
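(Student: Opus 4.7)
The plan is to derive \eqref{energy_dissipation} by differentiating the kinetic and nonlocal potential pieces of $\cae(t)$ separately and exploiting a cancellation of the work-of-the-force term $\int F_\eps\cdot J_\eps\,\diff x$, in the spirit of the standard total-energy computation for Vlasov--Poisson but adapted to the double-convolution structure of $\Phi^{S,L}_{\alpha,\eps}$. For the kinetic piece, I would divide \eqref{eq:V} by $\eps^2$, multiply by $|\xi|^2$, and integrate over $\R^{2d}$. Integration by parts in $x$ kills the transport term, and integration by parts in $\xi$ turns the force term into $-\tfrac{2}{\eps}\int F_\eps\cdot\xi f_\eps\,\diff x\diff \xi = -2\int F_\eps\cdot J_\eps\,\diff x$ using the flux definition from \eqref{eq:Mass}. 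This produces
\begin{equation*}
\f{d}{dt}\int_{\R^{2d}} |\xi|^2 f_\eps\,\diff x \diff \xi = 2\int_{\R^d} F_\eps\cdot J_\eps\,\diff x + \f{1}{\eps^2}\int_{\R^{2d}} |\xi|^2[\varrho_\eps M - f_\eps]\,\diff x \diff \xi.
\end{equation*}

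\textbf{Potential piece.} Setting $u_\eps := \omega^S\star\varrho_\eps$ and using $\int\omega^L_\alpha = 1$, expand the square in \eqref{def:energy} to rewrite the nonlocal contribution as
\begin{equation*}
P(t) = \f{1}{\alpha^{2}}\int_{\R^d} u_\eps\,[u_\eps - \omega^L_\alpha\star u_\eps]\,\diff x.
\end{equation*}
Differentiating in $t$ and exploiting self-adjointness of $\omega^L_\alpha\star\,\cdot$ on $L^2$ (from the symmetry of $\omega^L$) gives $\f{dP}{dt}= \f{2}{\alpha^{2}}\int \p_t u_\eps\,[u_\eps - \omega^L_\alpha\star u_\eps]\,\diff x$. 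Substituting $\p_t u_\eps = -\omega^S\star\dv J_\eps$ from the mass equation \eqref{eq:Mass} and transferring the symmetric kernel $\omega^S$ across the pairing identifies the integrand as $\dv J_\eps$ tested against $\alpha^{-2}\omega^S\star(u_\eps-\omega^L_\alpha\star u_\eps) = \Phi^S_{\alpha,\eps}+\Phi^L_{\alpha,\eps} = \Phi_\eps$. One further integration by parts then yields
\begin{equation*}
\f{dP}{dt} = -2\int_{\R^d}\dv J_\eps\,\Phi_\eps\,\diff x = 2\int_{\R^d} J_\eps\cdot\nabla\Phi_\eps\,\diff x = -2\int_{\R^d} F_\eps\cdot J_\eps\,\diff x.
\end{equation*}

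Adding the two identities, the $F_\eps\cdot J_\eps$ contributions cancel and \eqref{energy_dissipation} follows. The crucial algebraic step is the identification of the $L^2$ variation of the quadratic nonlocal form in $P$ with the full potential $\Phi_\eps$ that drives the Vlasov equation: this is precisely what makes the somewhat baroque double-convolution definition \eqref{eq:potential}--\eqref{as:alpha} produce a genuine Helmholtz structure, and it rests on the symmetry of both $\omega^L$ and $\omega^S$. The remaining work is to justify the integrations by parts in $x$ and $\xi$, which is ensured by the rapid decay of $\omega^L,\omega^S$ and $M$ together with the moment bounds collected in Theorem~\ref{thm:bounds_epsilonto0}.
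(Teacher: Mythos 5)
Your proof is correct and takes essentially the same route as the paper: multiply \eqref{eq:V} by $|\xi|^2$, integrate, convert the force term into $2\int F_\eps\cdot J_\eps\,\diff x$ via the flux definition and mass conservation \eqref{eq:Mass}, and match it with the time derivative of the nonlocal quadratic term, which is exactly the paper's identity \eqref{second_term_derivative}. Your only deviation is cosmetic: writing $u_\eps=\omega^S\star\varrho_\eps$, you first collapse the squared difference to $\alpha^{-2}\int u_\eps\,(u_\eps-\omega^L_\alpha\star u_\eps)\,\diff x$ using $\int\omega^L_\alpha=1$ and differentiate that compact form, whereas the paper keeps the square and pairs $\Phi^{L}_{\alpha,\eps}$ and $\Phi^{S}_{\alpha,\eps}$ against $\p_t\varrho_\eps$ separately; both computations rest on the same self-adjointness of the convolution kernels (including the evenness of $\omega^S$, which you state explicitly and the paper uses implicitly).
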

\begin{proof}
By multiplying \eqref{eq:V} by $|\xi|^2$ and taking the integrals with respect to $x$ and $\xi$ we obtain
\begin{equation} \label{xi^2-moment}
\begin{split} 
\eps^2 \int_{\R^{2d}} |\xi|^2 \p_t f_\eps \diff x \diff \xi + \eps \int_{\R^{2d}} |\xi|^2 \xi \cdot \nabla_x f_\eps \diff x \diff \xi & +\eps \int_{\R^{2d}} |\xi|^2 F_\eps \nabla_\xi f_\eps \diff x \diff \xi \\
&=  \int_{\R^{2d}} |\xi|^2 [\varrho_\eps M(\xi)-f_\eps] \diff x \diff \xi.
\end{split}
\end{equation}
For integrable solutions, the second term on the left-hand side vanishes. Furthermore, with integration by parts, the above equation reduces to
\begin{equation} \label{xi^2-moment after integration by part}
\eps^2 \f{d}{dt} \int_{\R^{2d}} |\xi|^2 f_\eps \diff x \diff \xi -2\eps \int_{\R^{2d}} \xi F_\eps f_\eps \diff x \diff \xi = \int_{\R^{2d}} |\xi|^2 [\varrho_\eps M(\xi)-f_\eps] \diff x \diff \xi.
\end{equation}
By recalling \eqref{eq:Mass}, the second term can be rewritten as
\begin{equation}
-2\eps \int_{\R^{2d}} \xi F_\eps f_\eps \diff x \diff \xi=
-2\eps^2 \int_{\R^d} \Phi_{\alpha, \eps} \,\dv J_\eps  \diff x= 2\eps^2 \int_{\R^d} \Phi_{\alpha, \eps} \,\p_t \varrho_\eps \diff x.
\end{equation}
We now want to prove that 
\begin{equation} \label{second_term_derivative}
2 \int_{\R^d} \Phi_{\alpha, \eps} \p_t \varrho_\eps \diff x= \f{1}{2\alpha^2} \f{d}{dt} \int_{\R^{2d}} \omega_\alpha^L (y) [\varrho_\eps \star \omega^S (t,x)-\varrho_\eps \star \omega^S (t,x-y)]^2 \diff x \diff y.
\end{equation}
First, by recalling \eqref{eq:potential},
\begin{equation}
2\int_{\R^d} \Phi_{\alpha, \eps} (t,x)\p_t \varrho_\varepsilon (t,x) \diff x=
 2\int_{\R^d} \Phi_{\alpha, \eps}^L(t,x) \p_t \varrho_\varepsilon(t,x) \diff x
 +2\int_{\R^d} \Phi_{\alpha, \eps}^S (t,x)\p_t \varrho_\varepsilon(t,x) \diff x.
\end{equation}
As regards the first term on the right-hand side
\begin{equation*}
\begin{split}
2\int_{\R^d} \Phi_{\alpha, \eps}^L(t,x) \p_t \varrho_\varepsilon(t,x) \diff x &=-\f{2}{\alpha^2}\int_{\R^d} [\omega^L_\alpha\star\omega^S\star\omega^S\star \varrho_\varepsilon] (t,x) \p_t \varrho_\varepsilon (t,x)\diff x \\
&=-\f{2}{\alpha^2}\int_{\R^d} [\omega^L_\alpha\star \varrho_\varepsilon\star \omega^S] (t,x) \p_t [\varrho_\varepsilon \star \omega^S] (t,x)\diff x \\
& =-\f{1}{\alpha^2}\int_{\R^{2d}} \omega^L_\alpha(y) [\varrho_\varepsilon \star \omega^S] (t,x-y) \p_t [\varrho_\varepsilon \star \omega^S] (t,x)\diff x \diff y  \\
& \quad \ -\f{1}{\alpha^2}\int_{\R^{2d}} \omega^L_\alpha(y) [\varrho_\varepsilon \star \omega^S] (t,x) \p_t [\varrho_\varepsilon \star \omega^S] (t,x-y)\diff x \diff y \\
&=-\f{1}{\alpha^2}\frac{d}{dt} \int_{\R^{2d}} \omega^L_\alpha(y)[[\varrho_\varepsilon \star \omega^S](t,x)\cdot [\varrho_\varepsilon\star \omega^S](t,x-y)]\diff x \diff y
\end{split}
\end{equation*}
The second term on the right-hand side can be handled similarly and gives
\begin{eqnarray*}
2\int_{\R^d} \Phi_{\alpha, \eps}^S (t,x) \p_t \varrho_\varepsilon (t,x)\diff x &=
&\frac{2}{\alpha^2}\int_{\R^d} [\omega^S\star\varrho_\varepsilon](t,x) \p_t [ \varrho_\varepsilon \star \omega^S](t,x)\diff x \\&=&\frac{1}{\alpha^2}\frac{d}{dt}\int_{\R^d} [ \varrho_\varepsilon \star \omega^S]^2(t,x)\diff x \\&=&\frac{1}{2\alpha^2}\frac{d}{dt}\int_{\R^{2d}} \omega^L_\alpha(y) \big[ [ \varrho_\varepsilon \star \omega^S]^2(t,x) + [ \varrho_\varepsilon \star \omega^S]^2(t,x-y)\big]\diff x \diff y.
\end{eqnarray*}
By summing up the two previous identities we get~\eqref{second_term_derivative}, which, inserted in~\eqref{xi^2-moment after integration by part}, concludes that
\begin{equation*}
\begin{split}
\eps^2 \f{d}{dt} \int_{\R^{2d}} |\xi|^2 f_\eps \diff x \diff \xi +\f{\eps^2}{2\alpha^2} \f{d}{dt} \int_{\R^{2d}} \omega_\alpha^L (y) [\varrho_\eps \star \omega^S (t,x)-\varrho_\eps \star \omega^S (t,x-y)]^2 \diff x \diff y & \\
= \int_{\R^{2d}} |\xi|^2 [\varrho_\eps M(\xi)-f_\eps] \diff x \diff \xi.
\end{split}
\end{equation*}
\end{proof}

\begin{proposition} [Entropy relation] \label{prop:entropy_dissipation}
The following estimate holds:
\begin{equation} \label{entropy_dissipation}
\frac d {dt}   \int_{\R^{2d}}  f_\eps \log f_\eps \diff x \diff \xi = \f{1}{\eps^2} \int_{\R^{2d}}  [ \varrho_\eps M(\xi)-f_\eps]   \log f_\eps \diff x \diff \xi.
\end{equation}
\end{proposition}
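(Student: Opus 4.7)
The strategy is to mimic the classical H-theorem computation: multiply the Vlasov equation \eqref{eq:V} by $\eps^{-2}(1 + \log f_\eps)$ and integrate over $\R^{2d}$. The time derivative term produces the left-hand side of \eqref{entropy_dissipation} via the chain rule identity $\partial_t(f_\eps \log f_\eps) = (1+\log f_\eps)\,\partial_t f_\eps$, so
\[
\eps^2 \int_{\R^{2d}} (1+\log f_\eps)\,\partial_t f_\eps \diff x\diff \xi = \eps^2 \,\frac{d}{dt}\int_{\R^{2d}} f_\eps \log f_\eps \diff x\diff \xi.
\]

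Next I would show that the two transport contributions vanish. Since $\xi$ is independent of $x$ and $F_\eps$ is independent of $\xi$, both vector fields are divergence-free in the appropriate variable, so by rewriting $(1+\log f_\eps)\nabla_x f_\eps = \nabla_x(f_\eps \log f_\eps)$ and $(1+\log f_\eps)\nabla_\xi f_\eps = \nabla_\xi(f_\eps \log f_\eps)$, an integration by parts in $x$, respectively $\xi$, kills these terms, provided there is enough decay at infinity to discard boundary contributions. On the right-hand side, the constant part of $1+\log f_\eps$ integrates to zero by mass balance: $\int_{\R^d} M(\xi)\diff\xi = 1$ implies $\int_{\R^{2d}}(\varrho_\eps M - f_\eps)\diff x\diff\xi = 0$, leaving exactly $\eps^{-2}\int(\varrho_\eps M - f_\eps)\log f_\eps$.

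The main obstacle is not algebraic but regularity-based: $\log f_\eps$ is singular where $f_\eps$ vanishes, and the integrability at infinity (both in $x$ and in $\xi$) of each manipulated term must be ensured so that the integrations by parts and the splittings above are legitimate. The standard fix is to work first with the regularized quantity $(f_\eps + \eta)\log(f_\eps + \eta)$ (or multiply by $\log(f_\eps+\eta)$), carry out the same computation where all integrands are bounded, and then let $\eta \downarrow 0$ using the bounds of Theorem~\ref{thm:bounds_epsilonto0}, in particular the $L^\infty_t L^1_{x,\xi}$ control on $f_\eps |\log f_\eps|$ and $f_\eps |\xi|^2$, which give the equi-integrability needed to pass to the limit in $\eta$ and to absorb the $\xi$-decay required for the transport integrations by parts. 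Since the same computation will later be combined with the total energy identity of Proposition~\ref{prop:total_energy_dissipation} to yield Theorem~\ref{thm: helmholtz}, I would keep track of how $\eps^{-2}\int(\varrho_\eps M - f_\eps)\log f_\eps \diff x\diff\xi$ combines with $\eps^{-2}\int|\xi|^2[\varrho_\eps M - f_\eps]\diff x\diff\xi$ to reconstruct the full dissipation $\int \mathcal D_\eps$, using $\log M(\xi) = -|\xi|^2/(2D) - \tfrac{d}{2}\log(2\pi D)$ together with $\int(\varrho_\eps M - f_\eps)\diff\xi = 0$ to identify the cross term $\int(\varrho_\eps M - f_\eps)\log(\varrho_\eps M)$.
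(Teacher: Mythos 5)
Your proposal is correct and follows essentially the same route as the paper's own proof: multiply \eqref{eq:V} by $(1+\log f_\eps)$, integrate in $(x,\xi)$ so that the two transport terms vanish by integration by parts, and use $\int_{\R^{2d}} [\varrho_\eps M(\xi)-f_\eps]\diff x \diff \xi = 0$ to discard the constant part of $1+\log f_\eps$ against the relaxation term. The paper carries out this computation at the formal level only; your regularization via $(f_\eps+\eta)\log(f_\eps+\eta)$ and the attention to decay at infinity are a careful supplement to the same argument, not a different approach.
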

\begin{proof}
By multiplying \eqref{eq:V} by $(1+\log f_\eps)$ we obtain
\begin{equation*}
\eps^2 \frac d {dt} (f_\eps \log f_\eps) + \eps \xi \cdot \nabla_x f_\eps (1+\log f_\eps) +\eps F_\eps \nabla_\xi f_\eps (1+\log f_\eps)=[\varrho_\eps M(\xi)-f_\eps](1+\log f_\eps)
\end{equation*}
By taking the integrals with respect to $x$ and $\xi$, the second and third terms in the above equation vanish and we obtain
\begin{equation*}
\eps^2 \frac d {dt} \int_{\R^{2d}} f_\eps \log f_\eps \diff x \diff \xi = \int_{\R^{2d}} [\varrho_\eps M(\xi)-f_\eps]\log f_\eps \diff x \diff \xi
\end{equation*}
as announced.
\end{proof}

\noindent With these two estimates, we can finally prove Theorem~\ref{thm: helmholtz}.

\begin{proof} [Proof of Theorem \ref{thm: helmholtz}]
From Propositions \ref{prop:total_energy_dissipation} and \ref{prop:entropy_dissipation}, we get the following result:
\begin{equation} \label{eq: helmholtz_1}
\begin{split}
\f{d}{dt} \caf(t)&=\f{1}{\eps^2}\left[ \int_{\R^{2d}} |\xi|^2 [\varrho_\eps M(\xi)-f_\eps] \diff x \diff \xi + 2D\int_{\R^{2d}} [ \varrho_\eps M(\xi)-f_\eps]   \log f_\eps \diff x \diff \xi \right] \\
& =\f{1}{\eps^2}\left[\int_{\R^{2d}} \varrho_\eps M(\xi) |\xi|^2 \diff x \diff \xi -\int_{\R^{2d}}|\xi|^2 f_\eps \diff x \diff \xi + 2D\int_{\R^{2d}} [ \varrho_\eps M(\xi)-f_\eps]   \log f_\eps \diff x \diff \xi \right].
\end{split}
\end{equation}
Using \eqref{eq: maxwellian}, we know that
$
\log (\varrho_\eps M(\xi)) =\log \varrho_\eps + C - \f{|\xi|^2}{2D}
$
for some constant $C$. Inserting this expression of $|\xi|^2$  in the first two terms on the righthand side of~\eqref{eq: helmholtz_1}, we obtain
\begin{equation*}
\begin{split}
\f{2D}{\eps^2}\int_{\R^{2d}}\left[\varrho_\eps M(\xi)-f_\eps\right]&\left[ \log \varrho_\eps + C -\log (\varrho_\eps M(\xi))\right] \diff x \diff \xi \\
&=\f{2D}{\eps^2}\int_{\R^{2d}} \left[\varrho_\eps M(\xi)-f_\eps\right]\left[ - \log (\varrho_\eps M(\xi))\right]  \diff x \diff \xi.
\end{split}
\end{equation*} 
Added to the third term on the righthand side of~\eqref{eq: helmholtz_1}, we obtain the announced result.
\end{proof}

\noindent In order to prove Theorem~\ref{thm:bounds_epsilonto0}, a major difficulty is to estimate the flux $J_\eps$ defined by \eqref{eq:Mass}. We start by establishing a useful inequality, recalling the notation \eqref{eq:dissipation_term}.

\begin{lemma}[Pointwise estimates on $J_{\eps}$] \label{lem:pointwise_J} For every $0<r\le 1$ and $(s,x)\in (0,T)\times\R^{d}$,  we have
\begin{equation*}
|J_{\eps}(s,x)|\le r \eps  \|{\mathcal D}_\eps(s, x,\cdot)\|_{L^1_{\xi}}+C\f{1}{r^{d}}\exp\left(\f{2C_{M}}{r^{2}}\right)\varrho_{\eps}(s,x)^{1/2}\|{\mathcal D}_\eps(s, x,\cdot)\|_{L^1_{\xi}}^{1/2}.
\end{equation*}
\end{lemma}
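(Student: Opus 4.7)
The plan is to exploit the cancellation $\int_{\R^d}\xi M(\xi)\,d\xi=0$, the sharp Csiszar-Kullback-type inequality $4(\sqrt{a}-\sqrt{b})^{2}\le(a-b)(\log a-\log b)$ for $a,b>0$ (sharper than classical Pinsker, derived from $\log u\ge 2(u-1)/(u+1)$), and a splitting of the velocity integration at the two scales $r$ and $1/r$ dictated by the explicit Gaussian form of $M$.

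Since $M$ has vanishing first moment, one writes $J_\eps=\tfrac{1}{\eps}\int\xi[f_\eps-\varrho_\eps M]\,d\xi$ so that $|J_\eps|\le\tfrac{1}{\eps}\int|\xi|\,|f_\eps-\varrho_\eps M|\,d\xi$. Applying the CK inequality with $a=f_\eps$ and $b=\varrho_\eps M$ gives $|\sqrt{f_\eps}-\sqrt{\varrho_\eps M}|\le\tfrac{\eps}{2}\sqrt{\mathcal{D}_\eps}$. Combining this with the factorization $|f_\eps-\varrho_\eps M|=|\sqrt{f_\eps}-\sqrt{\varrho_\eps M}|(\sqrt{f_\eps}+\sqrt{\varrho_\eps M})$ and the elementary bound $\sqrt{f_\eps}+\sqrt{\varrho_\eps M}\le 2\sqrt{\varrho_\eps M}+|\sqrt{f_\eps}-\sqrt{\varrho_\eps M}|$ yields the pointwise estimate
\[
|f_\eps-\varrho_\eps M|\le \eps\sqrt{\mathcal{D}_\eps\,\varrho_\eps M}+\tfrac{\eps^2}{4}\mathcal{D}_\eps,
\]
and therefore $|J_\eps|\le\int|\xi|\sqrt{\mathcal{D}_\eps\varrho_\eps M}\,d\xi+\tfrac{\eps}{4}\int|\xi|\mathcal{D}_\eps\,d\xi$.

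The first integral is immediately estimated by Cauchy-Schwarz against the second moment $\int|\xi|^2 M\,d\xi=dD$, giving $\sqrt{dD\,\varrho_\eps\|\mathcal{D}_\eps\|_{L^1_\xi}}$, which is absorbable into the target's second summand since $r\le 1$ forces $Cr^{-d}\exp(2C_M/r^2)$ to be uniformly bounded below by a positive constant. For the second integral, I split $\R^d=\{|\xi|\le r\}\cup\{|\xi|>r\}$. On the inner region the trivial bound $|\xi|\le r$ produces
\[
\tfrac{\eps}{4}\int_{|\xi|\le r}|\xi|\mathcal{D}_\eps\,d\xi\le\tfrac{r\eps}{4}\|\mathcal{D}_\eps\|_{L^1_\xi},
\]
which is the first summand of the target (up to a constant).

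The delicate contribution is the outer piece $\int_{|\xi|>r}|\xi|\mathcal{D}_\eps\,d\xi$, and this is the main obstacle: it is not controlled by $\|\mathcal{D}_\eps\|_{L^1_\xi}$ alone because $\mathcal{D}_\eps$ lacks velocity-moment estimates. The remedy is to avoid the pointwise CK estimate on this region and return to the original quantity $\tfrac{1}{\eps}|\xi|\,|f_\eps-\varrho_\eps M|$, to which Cauchy-Schwarz with weight $1/\sqrt{M}$ applies on the truncated ball $\{|\xi|\le 1/r\}$. The explicit formula $1/M(\xi)\le (2\pi D)^{d/2}\exp(|\xi|^2/2D)$, together with the volume of the ball, yields the key integral bound
\[
\int_{|\xi|\le 1/r}\frac{d\xi}{M(\xi)}\le Cr^{-d}\exp(2C_M/r^2),\qquad C_M=\tfrac{1}{4D},
\]
where $r^{-d}$ comes from the volume of $B_{1/r}$ and $\exp(2C_M/r^2)$ from the pointwise maximum of $1/M$ on the ball. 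This delivers exactly the characteristic factor $r^{-d}\exp(2C_M/r^2)$ of the second summand of the target. The contribution of the Gaussian tail $\{|\xi|>1/r\}$ is handled separately and produces an exponentially small term, absorbable into the same summand, which completes the bound.
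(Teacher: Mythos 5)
Your opening moves are sound: the cancellation $\int_{\R^d}\xi M(\xi)\,\diff\xi=0$, the pointwise inequality $4(\sqrt a-\sqrt b)^2\le(a-b)(\log a-\log b)$ giving $|f_\eps-\varrho_\eps M|\le \eps\sqrt{{\mathcal D}_\eps\,\varrho_\eps M}+\tfrac{\eps^2}{4}{\mathcal D}_\eps$, and the Cauchy--Schwarz bound $\int_{\R^d}|\xi|\sqrt{{\mathcal D}_\eps\,\varrho_\eps M}\,\diff\xi\le\sqrt{dD}\,\varrho_\eps^{1/2}\|{\mathcal D}_\eps\|_{L^1_\xi}^{1/2}$ are all correct and already produce the second summand of the lemma plus the inner-region term $\tfrac{r\eps}{4}\|{\mathcal D}_\eps\|_{L^1_\xi}$. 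The genuine gap is the outer region. The leftover term $\tfrac{\eps}{4}\int_{|\xi|>r}|\xi|{\mathcal D}_\eps\,\diff\xi$ would require a first velocity moment of ${\mathcal D}_\eps$, which is not among the quantities the lemma allows, and your proposed remedy --- returning to $\tfrac1\eps|\xi|\,|f_\eps-\varrho_\eps M|$ and applying Cauchy--Schwarz with weight $1/\sqrt M$ on $\{|\xi|\le 1/r\}$ --- fails structurally. However you distribute the weight, the second Cauchy--Schwarz factor is a chi-square-type quantity
\begin{equation*}
\frac{1}{\eps^{2}}\int \varrho_\eps M\,|y-1|^{2}\,w(\xi)\,\diff\xi,\qquad y:=\frac{f_\eps}{\varrho_\eps M},
\end{equation*}
and to dominate it by $\|{\mathcal D}_\eps\|_{L^1_\xi}=\tfrac{1}{\eps^2}\int\varrho_\eps M\,(y-1)\log y\,\diff\xi$ you need $(y-1)\,w(\xi)\le\log y$ pointwise on the integration set. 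Since $(y-1)/\log y\to\infty$ as $y\to\infty$, and a truncation in $\xi$ places no upper bound whatsoever on the ratio $y$, no fixed positive weight can work; the needed inequality holds only where $y\le 1$ (there $|y-1|\le|\log y|$), i.e. where $f_\eps\le\varrho_\eps M$. So precisely the set where $f_\eps$ dominates $\varrho_\eps M$ --- which can occur at any $|\xi|\in(r,1/r)$, not only in the Gaussian tail --- is left uncontrolled. Your computation $\int_{|\xi|\le 1/r}M(\xi)^{-1}\diff\xi\le Cr^{-d}e^{2C_M/r^2}$ is correct, but it only bounds the harmless factor.

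The paper closes exactly this hole by splitting according to the size of the log-ratio, not of the velocity: on $\{|\log y|\ge|\xi|/r\}$ it absorbs $|\xi|\le r|\log y|$ into the dissipation, yielding $r\eps\|{\mathcal D}_\eps\|_{L^1_\xi}$ outright with no Cauchy--Schwarz at all; on $\{|\log y|\le|\xi|/r\}$ it has the pointwise bound $y\le e^{|\xi|/r}$, whence by concavity $y-1\le\log y\;\frac{e^{|\xi|/r}-1}{|\xi|/r}$, and weighted Cauchy--Schwarz with $B(\xi)=\frac{|\xi|/r}{e^{|\xi|/r}-1}$ becomes admissible; the cost $\int_{\R^d} r|\xi|\,e^{-|\xi|^2/C_M}\bigl(e^{|\xi|/r}-1\bigr)\diff\xi\le Cr^{-d}e^{2C_M/r^2}$ is the analogue of your ball computation. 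If you wish to keep your Csiszar--Kullback route (which is a pleasant shortcut for the part it does handle), you must still perform this level-set split on $\{|\xi|>r\}$, at which point you will have essentially reproduced the paper's argument.
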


\begin{proof}
For $r>0$, we decompose $J_\eps(s,x) = J_{\eps}^{(1)}(s,x) + J_{\eps}^{(2)}(s,x)$, with
  
$$
J_\eps^{(1)} = \f{1}{\eps}\int_{\left\{\left|\log(\f{f_{\eps}}{\varrho_{\eps} M})\right|\ge\f{|\xi|}{r}\right\}}\xi (f_{\eps}-\varrho_{\eps}M(\xi))\diff \xi, \qquad
J_{\eps}^{(2)} = \f{1}{\eps}\int_{\left\{\left|\log(\f{f_{\eps}}{\varrho_{\eps} M})\right|\le\f{|\xi|}{r}\right\}}\xi (f_{\eps}-\varrho_{\eps}M(\xi))\diff \xi .
$$

For $J_\eps^{(1)}$, we write
\[
 |J_\eps^{(1)}(s,x)|  \le
\f{r}{\eps} \int_{\left\{\left|\log(\f{f_{\eps}}{\varrho_{\eps} M})\right|\ge\f{|\xi|}{r}\right\}}
\left|\log\left(\f{f_{\eps}}{\varrho_{\eps} M}\right)\right|\varrho_{\eps} M\left|\f{f_{\eps}}{\varrho_{\eps} M}-1\right|\diff\xi 
\le r \eps  \|{\mathcal D}_\eps(s, x,\cdot)\|_{L^1_{\xi}}.
\]

For $J_\eps^{(2)}$, we use the Cauchy-Schwarz inequality and, with $B(\xi):=\f{|\xi|}{r(\exp(\f{|\xi|}{r})-1)}$, 
\[
 |J_{\eps}^{(2)}(s, x)| \le \left(\int_{\R^d}|\xi|^{2}\f{\varrho_{\eps} M}{B(\xi)}\diff\xi \right)^{1/2}  \left(\f{1}{\eps^{2}}\int_{\left\{|\log(\f{f_{\eps}}{\varrho_{\eps} M})|\le\f{|\xi|}{r}\right\}}\varrho_{\eps} M\left|\f{f_{\eps}}{\varrho_{\eps} M}-1\right|^{2}B(\xi)\diff\xi \right)^{1/2}.
\]
Because $M(\xi)$ is a Gaussian and $\varrho_\eps$ depends only on $(t,x)$, we obtain
\[
 |J_{\eps}^{(2)}(s, x)| \le  \varrho_{\eps}(s, x)^{1/2} \left(\int_{\R^d}|\xi|^{2}\f{M(\xi)}{B(\xi)}\diff\xi \right)^{1/2}  (I_1+I_2)^{1/2}.
\]

Here we have split the second  integral according to the sign of $\log(\f{f_{\eps}}{\varrho_{\eps} M})$. When it is negative, we may write, since $B(\xi) \leq 1 $,
\[
I_1 := 
\f{1}{\eps^{2}}\int_{\left\{ \f{f_{\eps}}{\varrho_{\eps} M}\leq 1 \right\}}\varrho_{\eps} M\left|\f{f_{\eps}}{\varrho_{\eps} M}-1\right|^{2}B(\xi)\diff\xi
\leq \f{1}{\eps^{2}}\int_{\R^d}\varrho_{\eps} M\left|\f{f_{\eps}}{\varrho_{\eps} M}-1\right| \left|  \log \f{f_{\eps}}{\varrho_{\eps} M} \right|\diff\xi
= \|{\mathcal D}_\eps(s,x,\cdot)\|_{L^1_{\xi}}.
\]
The second term is defined as
\[
I_2 := \f{1}{\eps^{2}}\int_{\left\{ 0\leq \log(\f{f_{\eps}}{\varrho_{\eps} M})\leq \f{|\xi|}{r} \right\}}  \varrho_{\eps} M\left|\f{f_{\eps}}{\varrho_{\eps} M}-1\right|^{2}B(\xi)\diff\xi.
\]

Since $\log$ is a concave function, for $A>1$ and $y\in[1,A]$, we have $y-1 \leq \log(y)\f{A-1}{\log(A)}$. We choose $A=A(\xi):=\exp(\f{|\xi|}{r})$ and $y=\f{f_{\eps}}{\varrho_{\eps}M}$ so that $y \in [1,A]$ means exactly $0\leq\log(\f{f_{\eps}}{\varrho_{\eps} M})\le\f{|\xi|}{r}$. Then, $I_2$ can be estimated as follows

\begin{equation*}
I_{2}\le \f{1}{\eps^{2}}\int_{\R^d}\varrho_{\eps} M\left|\f{f_{\eps}}{\varrho_{\eps} M}-1\right|\log\left(\f{f_{\eps}}{\varrho_{\eps}M}\right)\f{r(\exp(\f{|\xi|}{r})-1)}{|\xi|}B(\xi)\diff\xi = \|{\mathcal D}_\eps(s,x,\cdot)\|_{L^1_{\xi}}.
\end{equation*}
Therefore, for some constant $C_{M}$, defined through $M(\xi)$, we have
\[
 |J_{\eps}^{(2)}(s, x)| \le  C\varrho_{\eps}(s, x)^{1/2} \|{\mathcal D}_\eps(s,x,\cdot)\|_{L^1_{\xi}}^{1/2}\left(\int_{\R^d}r|\xi|\exp\left(\f{-|\xi|^{2}}{C_{M}}\right)\left(\exp\left(\f{|\xi|}{r}\right)-1\right)\diff\xi \right)^{1/2}.
\]
It remains to treat the integral factor that we denote by~$I_{3}$ and for $r$ smaller than $1$,  
\begin{align*}
I_{3}&=\int_{\R^d}r|\xi|\exp\left(\f{-|\xi|^{2}}{C_{M}}\right)\left(\exp\left(\f{|\xi|}{r}\right)-1\right)\diff\xi
\le \f{C}{r^{d}}\exp\left(\f{2C_{M}}{r^{2}}\right)
\end{align*}
where $C$ does not depend on $r$. This can be seen by splitting the integral in the zones $\{|\xi|\le\f{2C_{M}}{r}\}$ and $\{|\xi|\ge \f{2C_{M}}{r}\}$. Finally, we obtain 
\begin{equation*}
|J_{\eps}|\le r \eps  \|{\mathcal D}_\eps(s, x,\cdot)\|_{L^1_{\xi}}+C\f{1}{r^{d}}\exp\left(\f{2C_{M}}{r^{2}}\right)\varrho_{\eps}(s,x)^{1/2}\|{\mathcal D}_\eps(s, x,\cdot)\|_{L^1_{\xi}}^{1/2}.
\end{equation*}
\end{proof}

From this lemma, we deduce the following $L^{1}$ bounds on $J_{\eps}$



\begin{proposition}[Estimate on $J_{\varepsilon}$ in $L^1_{x}$]\label{lem:estimate_Jeps}
With the decomposition of Lemma~\ref{lem:pointwise_J},  $
J_\eps(s,x) = J_{\eps}^{(1)}(s,x) + J_{\eps}^{(2)}(s,x)$, 
we have
\begin{itemize}
    \item 
    $ |J_{\eps}^{(1)}(s, x)| \le \eps  \|{\mathcal D}_\eps(s, x,\cdot)\|_{L^1_{\xi}}$,
    \item 
    $ |J_{\eps}^{(2)}(s, x)| \le C \varrho_{\eps}(s, x)^{1/2}  \|{\mathcal D}_\eps(s,x,\cdot)\|_{L^1_{\xi}}^{1/2}$,
    \item 
    $\| J_\eps^{(2)}(s,\cdot) \log_+^{1/2} |J_\eps^{(2)}(s,\cdot)| \|_{L^1_x}  \leq 
    C  \left[ \| \varrho_\eps(s,\cdot) \log_+ \varrho_\eps(s,\cdot) \|_{L^1_x} + \|{\mathcal D}_\eps(s,\cdot,\cdot)\|_{L^1_{x,\xi}} \right]$,
    \item $\| J_{\varepsilon}\log^{1/2}\log^{1/2}\max(J_{\varepsilon},e)\|_{L^1_{t,x}} \leq C(\|{\mathcal D}_\eps(s,\cdot,\cdot)\|_{L^1_{x,\xi}},\| \varrho_\eps(s,\cdot) \log_+ \varrho_\eps(s,\cdot) \|_{L^1_{t,x}} ) $ .
\end{itemize}
\end{proposition}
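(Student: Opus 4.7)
Bullets 1 and 2 are obtained by specializing the free parameter $r$ in Lemma~\ref{lem:pointwise_J} to $r=1$: the bound $|J_\eps^{(1)}|\le r\eps\|\mathcal{D}_\eps\|_{L^1_\xi}$ from that proof gives bullet 1, and the Gaussian integral $I_3$ at $r=1$ is a finite absolute constant, yielding bullet 2.

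For bullet 3, abbreviate $\varrho := \varrho_\eps(s,x)$ and $D := \|\mathcal{D}_\eps(s,x,\cdot)\|_{L^1_\xi}$. Squaring bullet 2 and taking $\log_+$ gives $\log_+|J_\eps^{(2)}|\le C_0+\tfrac{1}{2}(\log_+\varrho+\log_+D)$, and therefore
\begin{equation*}
|J_\eps^{(2)}|\log_+^{1/2}|J_\eps^{(2)}| \;\le\; C\sqrt{\varrho D}\,\bigl(C_0+\log_+^{1/2}\varrho+\log_+^{1/2}D\bigr).
\end{equation*}
Three applications of $\sqrt{AB}\le\tfrac{1}{2}(A+B)$, the third of them after rewriting $\sqrt{\varrho D}\,\log_+^{1/2}D$ as $\sqrt{D\cdot\varrho\log_+D}$, yield a linear combination of $\varrho$, $D$, $\varrho\log_+\varrho$, and $\varrho\log_+D$. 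The delicate term $\varrho\log_+D$ is handled by the elementary inequality
\begin{equation*}
\varrho\log_+D \;\le\; \varrho\log_+\varrho + D,
\end{equation*}
verified by case analysis on the sign of $D-\varrho$ together with $\log x\le x-1$ applied to $x=D/\varrho$ on $\{D\ge\varrho\}$. Integrating in $x$ and invoking conservation of mass $\|\varrho_\eps(s,\cdot)\|_{L^1_x}=\|f^0\|_{L^1_{x,\xi}}$ closes bullet 3.

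For bullet 4, the plan is to decompose $J_\eps=J_\eps^{(1)}+J_\eps^{(2)}$ and exploit that the weight $\sqrt{\log\log\max(z,e)}$ vanishes on $\{z\le e\}$ and is much slower than $\sqrt{\log_+z}$ on $\{z\ge e\}$. The $J_\eps^{(2)}$-part is absorbed into bullet 3 after integration in $t$ via a subadditivity estimate for $z\mapsto z\sqrt{\log\log\max(z,e)}$, using $\sqrt{\log\log z}\le\sqrt{\log z}$. The $J_\eps^{(1)}$-part, controlled pointwise by $\eps D\sqrt{\log\log\max(\eps D,e)}$, is supported on $\{D\ge e/\eps\}$, and on this set the $\eps$-prefactor together with the very slow growth of $\sqrt{\log\log\cdot}$ and a Young-type splitting against the already-controlled quantities $\varrho$, $D$, and $\varrho\log_+\varrho$ yields an $\eps$-uniform bound. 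The main obstacle throughout the proposition is the summand $\sqrt{\varrho D}\log_+^{1/2}D$ in bullet 3: a direct AM-GM would produce $D\log_+D$, which is \emph{not} controlled by the free-energy estimates, and the inequality $\varrho\log_+D\le\varrho\log_+\varrho+D$ is precisely the device that avoids this. Correspondingly, $|J_\eps^{(1)}|\sqrt{\log_+|J_\eps^{(1)}|}$ is not expected to be $\eps$-uniformly bounded (that would require $D\log_+D\in L^1$), which explains why the weaker double logarithm is used in bullet 4: its slow growth is exactly what allows the $\eps$-contribution from $J_\eps^{(1)}$ to be absorbed.
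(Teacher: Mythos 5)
Your handling of the first three bullets is correct and essentially the paper's argument: taking $r=1$ in Lemma~\ref{lem:pointwise_J} gives the first two items, and your chain of AM--GM steps combined with the inequality $\varrho\log_+ D\le \varrho\log_+\varrho+D$ is a valid (slightly longer) substitute for the paper's one-line inequality $(uv)^{1/2}\log^{1/2}(uv)\le u\log u+\sqrt{2}\,v$; in fact your key device is exactly the Fenchel--Young step the paper itself uses in Appendix~\ref{ap:loglog} to control $P^2_\eps$. The extra $\|\varrho_\eps\|_{L^1_x}$ term your route produces is harmless by mass conservation.

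The fourth bullet, however, contains a genuine gap. You keep the decomposition at a \emph{fixed} $r$ and propose to control the $J_\eps^{(1)}$-contribution, i.e.\ $\int_0^T\int_{\R^d}\eps D\,\log^{1/2}\log^{1/2}\max(\eps D,e)$ with $D=\|\mathcal{D}_\eps(s,x,\cdot)\|_{L^1_\xi}$, using the $\eps$-prefactor, the smallness of $\{D\ge e/\eps\}$, and Young-type splittings against $\varrho_\eps$, $D$, $\varrho_\eps\log_+\varrho_\eps$. No such argument can close: the only control on $D$ is $\|D\|_{L^1_{t,x}}\le C$, the quantity $\varrho_\eps$ does not appear in the $J_\eps^{(1)}$ bound at all (so no cross term is available), and $z\mapsto z\,\log^{1/2}\log^{1/2}\max(z,e)$ is superlinear, so there is simply no bound of this integral in terms of $\|D\|_{L^1_{t,x}}$ and $\eps$. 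Concretely, take $D=N$ on a set of measure $c/N$ and zero elsewhere: then $\|D\|_{L^1_{t,x}}=c$ while the integral equals $\eps\, c\,\log^{1/2}\log^{1/2}(\eps N)\to\infty$ as $N\to\infty$, even for fixed $\eps$. This is precisely why the paper abandons the fixed-$r$ split for this estimate and instead, in Appendix~\ref{ap:loglog}, optimizes $r=1/\alpha$ \emph{pointwise in $(s,x)$}: Lemma~\ref{lem:minimum_in_estimate_Jeps} chooses $\alpha$ solving $2\alpha^3 e^{\alpha^2}=\sqrt{v/u}$ with $u=\varrho_\eps$, $v=D$, and in the dissipation-dominated regime $v\ge u\log_+^{1/2}v$ this yields the improved pointwise bound $|J_\eps|\le 2\sqrt{2}\,v/\log_+^{1/2}\log_+^{1/2}v$, whose built-in deficit of a $\sqrt{\log\log}$ factor exactly cancels the weight and leaves $P^1_\eps\le\|\mathcal{D}_\eps\|_{L^1_\xi}\in L^1_{t,x}$ --- the ``limiting case'' the paper flags. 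Your closing intuition is correct in spirit (a single logarithm would demand $D\log_+D\in L^1$, which is unavailable), but the mechanism that makes the double logarithm work is the $(s,x)$-dependent optimization of $r$, not an $\eps$-gain from the fixed decomposition; as written, your bullet-4 argument is not repairable without that idea.
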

The first two estimates are similar to~\cite[Proposition 7.1]{MR2664460} for the Vlasov-Poisson-Fokker-Planck system. Here, we have additionally included the last two controls and we give a different proof.

\begin{proof}

The first two estimates are a direct consequence of Lemma~\ref{lem:pointwise_J}. The third estimate follows from the inequality, for $u\geq 1$, $v\geq 0$ and $uv\geq 1$,
\[
(uv)^{1/2} \log^{1/2} (uv) \leq u \log u + \sqrt 2 v.
\]

The last result is given for the sake of completeness and its technical proof is postponed to Appendix~\ref{ap:loglog}.
This concludes the proof of Proposition~\ref{lem:estimate_Jeps}.
\end{proof}

\noindent With these estimates, we can now prove the main result of this section.

\begin{proof}[Proof of Theorem \ref{thm:bounds_epsilonto0}] 
Estimate \ref{thm_eps:est1} follows by mass conservation. The next bounds are deduced from the energy equality 
\eqref{def:Henergy}-\eqref{helmholtz_dissipation} which we write as
\begin{equation}\label{eq:energy_sans_int_term}
\int_{\R^{2d}} \big[ 2D\log(f_\eps(t))+|\xi|^2 \big] f_\eps(t) \diff x \diff \xi + 2D\int_{0}^{t}\|{\mathcal D}_\eps(s,\cdot,\cdot)\|_{L^1_{x,\xi}} \diff s \leq \mathcal{F}(0),
\end{equation}
 where we ignore the nonnegative interaction term as it does not help in this computation. It is standard, see Appendix~\ref{appendix_a}, to conclude from this inequality that
\begin{equation}\label{eq:energy_pos_log}
\int_{\R^{2d}} \big[ 2D|\log(f_\eps(t))|+\frac{1}{2}|\xi|^2 \big] f_\eps(t) \diff x \diff \xi + D\, \|{\mathcal D}_\eps\|_{L^1_{t,x,\xi}} \leq \mathcal{F}(0) + C\left(\|\varrho_{\eps}\|_{L^{\infty}_t L^1_x}, \|x f^0\|_{L^{1}_{x,\xi}} \right).
\end{equation}

 The estimates \ref{thm_eps:est2} and \ref{thm_eps:est4} follow immediately. Then, estimate~\ref{thm_eps:est5} follows from estimate~\ref{thm_eps:est4} and the Csiszár-Kullback Inequality, see Lemma~\ref{lem:kullback_ineq}.
\\

\noindent Estimate \ref{thm_eps:est3} is also very standard and we reproduce the proof from \cite[Lemma 2.1]{MR2139941}. We consider the convex function $\psi(\varrho)=\varrho \log(\varrho)$ and  apply the Jensen inequality. We obtain 
\begin{multline*}
\varrho_\eps \log(\varrho_\eps)=\psi(\varrho_\eps) = \psi \left(\int_{\R^d} \f{f_\eps}{M}\ M\diff \xi \right)   \leq \int_{\R^{d}} \psi \left(\f {f_\eps}{M}\right) M \diff \xi =\\= \int_{\R^d} \f{f_\eps}{M} \bigg[\log f_\eps - \log M(\xi) \bigg] M \diff \xi  =\int_{\R^d} f_\eps \bigg[\log f_\eps +\f{|\xi|^2}{2D} \bigg] \diff \xi + C \int_{\R^d}f_\eps \diff \xi.
\end{multline*}
The conclusion follows by taking the absolute values of both sides and integrating with respect to~$x$.
\\

\noindent Finally, estimate \ref{thm_eps:est6} is a direct consequence of Proposition~\ref{lem:estimate_Jeps}, whereas \ref{thm_eps:est7} follows from \eqref{eq:bound_f_|x|}. Concerning the weak compactness of $\{\varrho_{\eps}\}$, it follows from estimates~\ref{thm_eps:est3} and~\ref{thm_eps:est7}. Then, the weak local compactness of $\{J_{\eps}\}$ is a direct consequence of Proposition~\ref{lem:estimate_Jeps} and the Dunford-Pettis theorem. Indeed, with the notations of Lemma~\ref{lem:pointwise_J}, $J_{\eps}^{1}$ converges strongly to 0 in $L^{1}_{t,x}$. For $J_{\eps}^{2}$ we first have the weak local compactness in $L^{1}_{t,x}$ thanks to the third estimate of Proposition~\ref{lem:estimate_Jeps}, bound~\ref{thm_eps:est3} and the Dunford Pettis theorem. To prove the global weak compactness we only need to prove it for $J_{\eps}^{(2)}$. We recall that, from Lemma~\ref{lem:pointwise_J}, we have
\[
 |J_{\eps}^{(2)}(s, x)| \le C \varrho_{\eps}(s, x)^{1/2}  \|{\mathcal D}_\eps(s,x,\cdot)\|_{L^1_{\xi}}^{1/2}.
\]
Therefore we can estimate with the Cauchy-Schwarz inequality
$$
\| J_{\eps}^{(2)}\, |x|^{1/2} \|_{L^1_{t,x}} \leq C\, \|\varrho_{\eps} \,|x| \|_{L^1_{t,x}}^{1/2} \, \|{\mathcal D}_\eps\|_{L^1_{t,x,\xi}}^{1/2}
$$
which yields global weak compactness in $L^{1}_{t,x}$ with the Dunford-Pettis theorem. This ends the proof.
\end{proof}

\section{The limit $\eps \to 0$}
\label{sec:limit_2}

We now perform the analysis allowing us to prove Theorem~\ref{thm:limit_eps}.  We take $\alpha=\eps$ where the parameter $\alpha$ defines the long range potential \eqref{as:alpha}. Note, however, that different scaling between $\alpha$ and $\eps$ could possibly be considered. \\

Recalling the mass balance equation~\eqref{eq:Mass} and the $\xi$-moment equation~\eqref{eq:Jeps},
our aim is to take the limit $\eps \to 0$ in these equations, and establish the relations
\beq
\p_t \varrho (t,x) + \dv \, J (t,x) = 0, 
\label{eq:MassLimit}
\eeq
\beq
J (t,x)= - D\nabla \varrho (t,x)  -  \varrho \nabla \Phi(t,x) , \qquad \Phi = -\delta\Delta[\omega^{S}\star\omega^{S}\star\varrho], 
\label{eq:JLimit}
\eeq
which are equivalent to \eqref{limit_equation}.
\\

A significant contribution comes from Theorem~\ref{thm:bounds_epsilonto0}. The entropy bound for~$\varrho_\eps$, see~\ref{thm_eps:est3}, and the $L^1$ bound on $J_\eps$, see Proposition~\ref{lem:estimate_Jeps}, we immediately conclude that
\\
$\bullet$ after extractions, $\varrho_{\eps}$ and $J_\eps (t,x)$ admit weak limits in $L^1_{t,x}$, $\varrho$ and  $J$, see also Theorem~\ref{thm:bounds_epsilonto0}, 
\\
$\bullet$ the equation~\eqref{eq:MassLimit} holds in the distributional sense. 
\\

The latter estimate on $J_\eps$ also tells us that $\eps^2 \p_t J_\eps (t,x)$ converges to $0$ in the distributional sense. Therefore, establishing the equation~\eqref{eq:JLimit} from equation~\eqref{eq:Jeps}, is reduced to proving the two local weak limits in $L^1_{t,x}$
\[
\int_{\R^d} \xi \otimes \xi \,  f_\eps (t,x, \xi) \diff \xi \to D \varrho (t,x)\, {\rm I}, \qquad 
\varrho_\eps \nabla \Phi_\eps \to \varrho \nabla \Phi(t,x).
\]
%
%
These follow directly from the following three lemmas
\begin{lemma}\label{lem:xi_tensor_xi_term} We have
\begin{equation*}
\int_{(0,T)\times \R^{d}}\left|\int_{\R^{d}}\xi\otimes\xi (f_{\eps}-\varrho_{\eps}M) \diff \xi\right|\diff x\diff t\xrightarrow[\eps\to 0]{} 0.
\end{equation*}
\end{lemma}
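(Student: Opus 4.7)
The plan is to reduce the matrix-valued statement to the scalar estimate
\[
\int_0^T\!\!\int_{\R^d}\!\int_{\R^d} |\xi|^2 |f_\eps - \varrho_\eps M|\diff\xi\diff x\diff t \xrightarrow[\eps\to 0]{} 0,
\]
since every entry of $\xi\otimes\xi$ is bounded in modulus by $|\xi|^2$. I would introduce a cutoff radius $R=R(\eps)\to\infty$ and split the inner $\xi$-integral at $|\xi|=R$. On $\{|\xi|\le R\}$ one has $\int_{|\xi|\le R}|\xi|^2|f_\eps-\varrho_\eps M|\diff\xi\le R^2\int|f_\eps-\varrho_\eps M|\diff\xi$, and estimate~\ref{thm_eps:est5} of Theorem~\ref{thm:bounds_epsilonto0} (giving $\|\varrho_\eps M-f_\eps\|_{L^1_{t,x,\xi}}\le C\eps$) yields an $L^1_{t,x}$-bound of order $R^2\eps$. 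For the tail I would bound $|f_\eps-\varrho_\eps M|\le f_\eps+\varrho_\eps M$ and treat the two pieces separately: the Maxwellian contribution is estimated by the Gaussian tail $\int_{|\xi|>R}|\xi|^2\varrho_\eps M\diff\xi\le C\varrho_\eps R^d e^{-R^2/(2D)}$, producing an $L^1_{t,x}$ bound of order $R^d e^{-R^2/(2D)}$.

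The substantive difficulty is $\int_{|\xi|>R}|\xi|^2 f_\eps\diff\xi$: Theorem~\ref{thm:bounds_epsilonto0} delivers only the second $\xi$-moment of $f_\eps$, so neither a higher moment nor equi-integrability of $|\xi|^2 f_\eps$ is available to make the tail uniformly small in $\eps$. To handle this I would introduce a second dichotomy driven by the ratio $f_\eps/(\varrho_\eps M)$,
\[
(\mathrm I)=\{f_\eps\le e^{|\xi|^2/(4D)}\varrho_\eps M\},\qquad (\mathrm{II})=\{f_\eps > e^{|\xi|^2/(4D)}\varrho_\eps M\}.
\]
On $(\mathrm I)$ the Gaussian form of $M$ leaves a net decay $|\xi|^2 f_\eps \le |\xi|^2 e^{|\xi|^2/(4D)}\varrho_\eps M\lesssim \varrho_\eps |\xi|^2 e^{-|\xi|^2/(4D)}$, and the tail is controlled by $C\varrho_\eps R^d e^{-R^2/(4D)}$. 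On $(\mathrm{II})$ one has $\log(f_\eps/(\varrho_\eps M))>|\xi|^2/(4D)$; picking $R$ so that $R^2\ge 4D\log 2$ forces $f_\eps-\varrho_\eps M\ge f_\eps/2$ on this set, hence
\[
\eps^2\mathcal{D}_\eps=(f_\eps-\varrho_\eps M)\log\f{f_\eps}{\varrho_\eps M}\ge \f{|\xi|^2 f_\eps}{8D},
\]
so $\int_{(\mathrm{II})\cap\{|\xi|>R\}}|\xi|^2 f_\eps\diff\xi\le 8D\eps^2\int\mathcal{D}_\eps\diff\xi$, bounded in $L^1_{t,x}$ by $C\eps^2$ thanks to estimate~\ref{thm_eps:est4}.

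Collecting the four contributions, the $L^1_{t,x}$-norm in question is bounded by
\[
C\bigl(R^2\eps+R^d e^{-R^2/(2D)}+R^d e^{-R^2/(4D)}+\eps^2\bigr),
\]
and choosing $R=2\sqrt{D\log(1/\eps)}$ (which enforces $R^2\ge 4D\log 2$ for $\eps$ small and gives $e^{-R^2/(4D)}=\eps$, $e^{-R^2/(2D)}=\eps^2$) sends every summand to zero. The principal obstacle, as already highlighted, is the absence of a moment of order higher than two and of equi-integrability of $|\xi|^2 f_\eps$; the Maxwellian-versus-concentrated dichotomy is designed precisely so that the dissipation $\mathcal{D}_\eps$ acts as a substitute for the missing higher moment on the strongly concentrated region, while the Gaussian decay of $M$ absorbs the complementary region.
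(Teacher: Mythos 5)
Your proof is correct, but it takes a genuinely different route from the paper's. The paper adapts its Lemma~\ref{lem:pointwise_J} directly: after the same reduction to $\int_{\R^d}|\xi|^{2}|f_\eps-\varrho_\eps M|\diff\xi$, it splits on $|\log(f_\eps/(\varrho_\eps M))|\gtrless|\xi|^{2}/r$ with a \emph{fixed} $r$, and on the second region applies Cauchy--Schwarz with the weight $B(\xi)=\f{|\xi|^{2}}{r(\exp(|\xi|^{2}/r)-1)}$ together with the concavity bound $|y-1|^{2}B(\xi)\le (y-1)\log y$, arriving at the pointwise estimate $\left|\int_{\R^d}\xi\otimes\xi\,(f_\eps-\varrho_\eps M)\diff\xi\right|\le r\eps^{2}\|\mathcal{D}_\eps\|_{L^1_\xi}+C\eps\,\varrho_\eps^{1/2}\|\mathcal{D}_\eps\|_{L^1_\xi}^{1/2}$, which integrates in $(t,x)$ (Cauchy--Schwarz plus estimates \ref{thm_eps:est1} and \ref{thm_eps:est4}) to an $O(\eps)$ rate. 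You instead use an $\eps$-dependent velocity cutoff $R(\eps)=2\sqrt{D\log(1/\eps)}$ combined with the Gaussian-domination dichotomy $f_\eps\gtrless e^{|\xi|^{2}/(4D)}\varrho_\eps M$: on $\{|\xi|\le R\}$ you invoke the ready-made bound \ref{thm_eps:est5} (itself the Csisz\'ar--Kullback consequence of \ref{thm_eps:est4}, so morally the same ingredients as the paper's weighted Cauchy--Schwarz step); the Maxwellian and region-(I) tails decay like Gaussians; and on region (II) your pointwise inequality $\eps^{2}\mathcal{D}_\eps\ge|\xi|^{2}f_\eps/(8D)$ (valid once $e^{R^{2}/(4D)}\ge 2$ forces $f_\eps-\varrho_\eps M\ge f_\eps/2$, with both factors of $\mathcal{D}_\eps$ positive there) lets the dissipation substitute for the missing velocity moment of order higher than two --- exactly the role the weight $B$ plays in the paper. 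The trade-off: the paper's argument yields the cleaner $O(\eps)$ rate and recycles the machinery of Lemma~\ref{lem:pointwise_J} wholesale, while yours is more elementary and self-contained (only \ref{thm_eps:est1}, \ref{thm_eps:est4}, \ref{thm_eps:est5} and Gaussian tail bounds such as $\int_{|\xi|>R}|\xi|^{2}e^{-|\xi|^{2}/(4D)}\diff\xi\le CR^{d}e^{-R^{2}/(4D)}$ for large $R$), at the cost of the slightly worse, still vanishing, $O(\eps\log(1/\eps))$ bound coming from the $R^{2}\eps$ term; all your steps, including the smallness constraint $R^{2}\ge 4D\log 2$ being automatic for small $\eps$, check out.
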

\begin{lemma} \label{prop:rho_strongly_L1}
The sequence $\{\varrho_\eps\}$ is precompact in $L^p_{t}L^{1}_{x}$ for every $1\le p<\infty$.
\end{lemma}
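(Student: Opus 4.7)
The plan is to verify the Riesz--Fr\'echet--Kolmogorov compactness criterion for $\{\varrho_\eps\}$ in $L^1((0,T)\times\R^d)$, and then upgrade the resulting strong convergence to $L^p_t L^1_x$. Tightness at spatial infinity is immediate from estimate~\ref{thm_eps:est7} via Chebyshev: $\int_{|x|\ge R}\varrho_\eps\,\diff x\le R^{-1}\|\varrho_\eps |x|\|_{L^\infty_t L^1_x}$, uniformly in $\eps$ and $t$. Equicontinuity in time follows by testing the mass balance $\partial_t\varrho_\eps=-\dv J_\eps$ against $\varphi\in C_c^\infty(\R^d)$: for $h>0$,
\[
\int\varphi\,[\varrho_\eps(t+h,\cdot)-\varrho_\eps(t,\cdot)]\,\diff x=\int_t^{t+h}\!\!\int\nabla\varphi\cdot J_\eps\,\diff x\,\diff s,
\]
which, combined with the weak $L^1_{t,x}$ equiintegrability of $\{J_\eps\}$ from Theorem~\ref{thm:bounds_epsilonto0}, yields equicontinuity in a negative Sobolev norm uniformly in $\eps$.

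The core step is spatial equicontinuity, obtained via the kinetic averaging lemma~\ref{lem:masmoudi}. Dividing \eqref{eq:V} by $\eps$ and using $\dv_\xi F_\eps=0$, the VCH equation takes the form
\[
\eps\,\partial_t f_\eps+\xi\cdot\nabla_x f_\eps+\nabla_\xi\!\cdot(F_\eps f_\eps)=\frac{\varrho_\eps M-f_\eps}{\eps},
\]
whose right-hand side is bounded in $L^1_{t,x,\xi}$ by estimate~\ref{thm_eps:est5}. The family $\{f_\eps\}$ is equiintegrable in $(x,\xi)$ thanks to the entropy and $|\xi|^2$-moment bounds of estimate~\ref{thm_eps:est2} and tight thanks to~\ref{thm_eps:est7}. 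The averaging lemma then yields that $\int f_\eps\,\psi(\xi)\,\diff\xi$ is precompact in $L^1_{t,x}$ for every $\psi\in C_c^\infty(\R^d_\xi)$. Choosing cut-offs $\psi_n\uparrow 1$ and controlling the tail uniformly via $\int_{|\xi|\ge n}f_\eps\,\diff\xi\le n^{-2}\int|\xi|^2 f_\eps\,\diff\xi$, we deduce that $\varrho_\eps=\int f_\eps\,\diff\xi$ itself is precompact in $L^1_{t,x}$.

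Combining tightness with equicontinuity in both time and space, Kolmogorov--Riesz--Fr\'echet yields a subsequence $\varrho_\eps\to\varrho$ strongly in $L^1((0,T)\times\R^d)$. Extracting further so that $\|\varrho_\eps(t,\cdot)-\varrho(t,\cdot)\|_{L^1_x}\to 0$ for a.e.\ $t$, and observing that this quantity is uniformly bounded by $2\|\varrho_\eps\|_{L^\infty_t L^1_x}$, dominated convergence in $t$ upgrades the convergence to $L^p_t L^1_x$ for every $1\le p<\infty$.

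The main obstacle is the averaging-lemma step in the presence of the force term $\nabla_\xi\!\cdot(F_\eps f_\eps)$: the source $F_\eps f_\eps$ must stay controlled in a suitable space (say, bounded measures in $(t,x,\xi)$) even though the potentials $\Phi_{\alpha,\eps}^{L,S}$ carry prefactors $\alpha^{-2}=\eps^{-2}$. The systematic cancellation of the leading terms of $\Phi_{\alpha,\eps}^L+\Phi_{\alpha,\eps}^S$, together with the Helmholtz free-energy control on $\varrho_\eps\star\omega^S$ from Theorem~\ref{thm: helmholtz}, is precisely what renders this step feasible.
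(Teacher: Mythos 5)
Your overall architecture (tightness from estimate~\ref{thm_eps:est7}, time equicontinuity from the mass balance, spatial equicontinuity from velocity averaging, Kolmogorov--Riesz--Fr\'echet, then an upgrade to $L^p_tL^1_x$) matches the paper's, but the core step has a genuine gap: you apply the averaging lemma (Lemma~\ref{lem:masmoudi}) directly to $f_\eps$, invoking only equiintegrability and moment bounds. That lemma requires $\{h^{\eps}\}$ to be bounded in $L^2_{t,x,\xi}$, and $f_\eps$ is controlled only in $L^1$ with entropy and $|\xi|^2$-moment bounds; no uniform $L^2$ bound is available, and $L^1$-equiintegrability is not a hypothesis under which the cited lemma operates. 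This is precisely the obstruction the paper points out, and the missing idea is renormalization: setting $\beta_{\nu}(f)=f/(1+\nu f)$ and multiplying \eqref{eq:V} by $\beta_{\nu}'(f_\eps)$, one obtains an equation for $h^{\eps}=\beta_{\nu}(f_\eps)$, which is bounded in $L^1\cap L^\infty$ and hence in $L^2$ by interpolation, with $h_0^{\eps}=(\varrho_\eps M-f_\eps)\beta_{\nu}'(f_\eps)/\eps$ bounded in $L^1$ by estimate~\ref{thm_eps:est5} and $h_1^{\eps}=-F_\eps\beta_{\nu}(f_\eps)$ bounded in $L^1$ by the $L^\infty$ bound on $F_\eps$ from Lemma~\ref{lem:potential_boundedness}. (Note also that this $L^\infty$ bound rests on the Taylor cancellation together with $\|\varrho_\eps\|_{L^\infty_tL^1_x}$ and the smoothness of $\omega^S$, not on the Helmholtz free-energy estimate, so your closing remark misattributes what makes the force term tractable.) One then applies the averaging lemma to $\beta_{\nu}(f_\eps)$ for each fixed $\nu>0$ and removes the renormalization using the entropy bound, via Lemma~\ref{lem:compact_beta}, before removing the velocity cut-off as you do.

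A secondary, fixable omission concerns time compactness: testing $\p_t\varrho_\eps=-\dv J_\eps$ against $\varphi\in C_c^{\infty}(\R^d)$ gives equicontinuity only in a negative norm, which is not sufficient for Kolmogorov--Riesz--Fr\'echet in $L^1_{t,x}$ as you assert. The upgrade to genuine $L^1$ time-translation continuity uses the spatial equicontinuity through mollification: one writes $\|\varrho_\eps(t+h,\cdot)-\varrho_\eps(t,\cdot)\|_{L^1_x}$ as two mollification errors, controlled uniformly in $\eps$ by the spatial translation estimate \eqref{eq:compactness_space}, plus the mollified time difference, controlled by $\f{h}{\delta}\|J_\eps\|_{L^1_{t,x}}$, and then chooses $\delta=h^{1/2}$; this is the content of Lemma~\ref{lem:frechet-kolmogorov} and needs to be carried out, not merely asserted. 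Your final upgrade to $L^p_tL^1_x$ via a.e.-in-$t$ convergence, the uniform $L^\infty_tL^1_x$ bound and dominated convergence is correct and is an acceptable alternative to the paper's interpolation argument.
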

\begin{lemma} \label{lem:potential_boundedness}
The potential $\Phi_{\eps}(t,x)$ satisfies,  uniformly in $\eps \in (0,1)$, 
\begin{equation} \label{boundedness of potential}
  \|\Phi_{\eps}\|_\infty \le C,\quad \|\nabla\Phi_{\eps}\|_\infty\le C.
\end{equation}
Moreover, we have for every $1\le p<\infty$ the strong convergence in $L^{p}_{t}L^{\infty}_{x}$,
\begin{equation} \label{limit_phi}
    \Phi_\eps (t,x) \longrightarrow \Phi(t,x),\quad \nabla\Phi_\eps (t,x) \longrightarrow \nabla\Phi(t,x), \quad \Phi(t,x):=-\delta\Delta[\omega^{S}\star\omega^{S}\star\varrho(t,x)].
\end{equation}
\end{lemma}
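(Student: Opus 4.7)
The first move is to merge the long- and short-range pieces of $\Phi_\eps$ into a single convolution increment. Since $\int\omega^L_\alpha = 1$, setting $g_\eps := \omega^S\star\omega^S\star\varrho_\eps$ we obtain
\[
\Phi_\eps(x) \;=\; \frac{1}{\alpha^2}\int_{\R^d}\omega^L_\alpha(y)\bigl[g_\eps(x)-g_\eps(x-y)\bigr]\,\diff y.
\]
The vanishing first moment and prescribed second moment of $\omega^L$ in \eqref{as:omega} suggest that $\Phi_\eps$ behaves asymptotically like a constant multiple of $\delta\,\Delta g_\eps$ as $\alpha=\eps\to 0$, which is exactly the limit $\Phi$ in \eqref{limit_phi}.

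To get the uniform bounds in the first assertion, I would Taylor-expand
\[
g_\eps(x-y) = g_\eps(x) - y\cdot\nabla g_\eps(x) + \tfrac{1}{2}\,y^\top D^2 g_\eps(x)\,y + R_\eps(x,y),\qquad |R_\eps(x,y)|\le C\,\|D^3 g_\eps\|_\infty\,|y|^3,
\]
integrate against $\omega^L_\alpha(y)\diff y$, and use $\int y\,\omega^L_\alpha = 0$, $\int y_iy_j\omega^L_\alpha = \alpha^2\,\delta_{i,j}\,\delta$, together with $\int|y|^3\omega^L_\alpha = \mathcal O(\alpha^3)$ from \eqref{as:omega}. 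This yields
\[
\Phi_\eps(x) \;=\; -\delta\,\Delta g_\eps(x) \;+\; \mathcal O\!\bigl(\alpha\,\|D^3 g_\eps\|_\infty\bigr),
\]
up to the normalization constant coming from the second moment. Young's inequality bounds $\|D^k g_\eps\|_{L^\infty_x} \le \|D^k(\omega^S\star\omega^S)\|_{L^\infty}\,\|\varrho_\eps\|_{L^1_x}$, and the right-hand side is uniform in $\eps$ thanks to Theorem~\ref{thm:bounds_epsilonto0}\ref{thm_eps:est1} together with the implicit smoothness of $\omega^S$ (the Gaussian-like assumption supplies derivatives of any order for $\omega^S\star\omega^S$). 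This gives $\|\Phi_\eps\|_\infty\le C$. Repeating the same expansion with $g_\eps$ replaced by $\nabla g_\eps$ provides $\|\nabla\Phi_\eps\|_\infty\le C$, at the cost of one further derivative on $\omega^S\star\omega^S$.

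For the strong convergence in \eqref{limit_phi}, I would invoke Lemma~\ref{prop:rho_strongly_L1}: $\varrho_\eps\to\varrho$ strongly in $L^p_t L^1_x$. Young's inequality once more propagates this to every derivative of the convolution,
\[
\|D^k g_\eps - D^k g\|_{L^p_t L^\infty_x} \;\le\; \|D^k(\omega^S\star\omega^S)\|_{L^\infty}\,\|\varrho_\eps-\varrho\|_{L^p_t L^1_x}\;\longrightarrow\;0,
\]
with $g := \omega^S\star\omega^S\star\varrho$. In particular $\Delta g_\eps\to\Delta g$ in $L^p_t L^\infty_x$, and the $\mathcal O(\alpha)$ remainder from the Taylor step is already uniform in $(t,x)$. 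Combining the two, $\Phi_\eps\to -\delta\,\Delta g = \Phi$ in $L^p_t L^\infty_x$, and the analogous statement for $\nabla\Phi_\eps$ follows by differentiating the Taylor identity once before applying the same argument.

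The argument is essentially bookkeeping; there is no deep PDE obstruction, since all the uniform-in-$\eps$ ingredients (mass bound and strong compactness of $\varrho_\eps$) are already delivered by Theorem~\ref{thm:bounds_epsilonto0} and Lemma~\ref{prop:rho_strongly_L1}. The only delicate point is that the Taylor remainder requires control of $\|D^3 g_\eps\|_\infty$ (and $\|D^4 g_\eps\|_\infty$ for $\nabla\Phi_\eps$), so one must rely on enough regularity of $\omega^S$ to make $D^k(\omega^S\star\omega^S)$ bounded for $k\le 4$, which is consistent with the standing interpretation of $\omega^S$ as a smooth mollifier.
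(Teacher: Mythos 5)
Your proposal is correct and follows essentially the same route as the paper's own proof: the same reduction of $\Phi_\eps$ to a single convolution increment of $g_\eps=\omega^S\star\omega^S\star\varrho_\eps$ using $\int\omega^L_\alpha=1$, the same Taylor expansion killed by the moment conditions \eqref{as:omega}, the same Young-inequality control $\|D^k g_\eps\|_\infty\le C\|\varrho_\eps\|_{L^1_x}$ via Theorem~\ref{thm:bounds_epsilonto0}\ref{thm_eps:est1} (so the uniform bounds need only the mass estimate, while the convergence invokes Lemma~\ref{prop:rho_strongly_L1}, exactly as in the paper and with no circularity). The only cosmetic remark is the factor $\tfrac12$ from the second-order Taylor term, so with the normalization \eqref{as:omega} the limit is really $-\tfrac{\delta}{2}\Delta g$; you correctly flag this as a normalization constant, and in fact the paper's own computation glosses over the same factor.
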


The end of the proof of Theorem~\ref{thm:limit_eps} is thus to establish these results.

\begin{proof}[Proof of Lemma \ref{lem:potential_boundedness}]
Recalling the expressions of both long-range and short-range potentials and that $\alpha=\eps$, we see that 
\begin{equation*}
\Phi_\eps (t,x)=-\frac{1}{\eps^2} \int_{\R^d} \omega_\eps^L (z) \left[\omega^S \star \omega^S\star \varrho_\eps(t,x-z)-\omega^S \star \omega^S \star \varrho_\eps(t,x) \right] \diff z.
\end{equation*}
Let now set $y=\frac{z}{\eps}$, so that from \eqref{as:omega} we deduce that
\begin{equation*}
\Phi_\eps (t,x)=-\frac{1}{\eps^2} \int_{\R^d} \omega^L (y) \left[\omega^S \star \omega^S\star \varrho_\eps(t,x-\eps y)-\omega^S \star \omega^S \star \varrho_\eps(t,x) \right] \diff y.
\end{equation*}
Because the convolution terms are smooth (say $W^{3,\infty}$), we may use the Taylor expansion and obtain
\begin{equation*}
\begin{split}
\Phi_\eps(t,x)&=\frac{1}{\eps}\int_{\R^d}  \nabla_x [ \omega^S \star \omega^S \star \varrho_\eps(t,x)] \cdot y\  \omega^L (y)\diff y - \int_{\R^d} D^2_x [ \omega^S \star \omega^S \star \varrho_\eps(t,x)]y\cdot y \,\omega^L (y)\diff y +O(\eps)
\end{split}
\end{equation*}
where the term $O(\eps)$ converges to $0$ in $L^\infty$ since it is controlled by 
\[
C \eps \int_{\R^d} |y|^3 \omega^L (y)\| D^3_x\, \omega^S \star \omega^S \star \varrho_\eps(t,\cdot) \|_\infty \diff y,
\]
and we recall the uniform bound~\ref{thm_eps:est1}. Moreover, recalling \eqref{as:omega}, we see that the first term in the right-hand side vanishes and the Hessian matrix reduces to the Laplacian, so that
\begin{equation} \label{eq: boundedness_potential_3} 
\Phi_\eps(t,x)=-\delta \Delta_x \left[ \omega^S \star \omega^S \star \varrho_\eps(t,x) \right]+O(\eps)  
\end{equation}
from which we directly conclude from~\ref{thm_eps:est1}
\begin{equation*}
||\Phi_\eps||_\infty \leq C \quad \mbox{uniformly in $\eps \in (0,1)$}.
\end{equation*}
As far as $\nabla \Phi_\eps$ is concerned, the properties of convolution with respect to derivatives gives
\begin{equation*}
\nabla \Phi_\eps (t,x)=-\frac{1}{\eps^2} \int_{\R^d} \omega_\eps^L (z) \left[\nabla\omega^S \star \omega^S\star \varrho_\eps(t,x-z)-\nabla \omega^S \star \omega^S \star \varrho_\eps(t,x) \right] \diff z,
\end{equation*}
so that the $L^\infty_{t,x}$ bounded on $\nabla \Phi_\eps$ follows from the previous argument assuming now that $\omega^S \in W^{4,\infty}$. 
\\ 

It remains to show that $\Phi_\eps \rightarrow \Phi$ strongly in $L^{p}_{t}L^{\infty}_{x}$, the convergence of $\nabla\Phi_{\eps}$ uses the same arguments.
The convergence follows from~\eqref{eq: boundedness_potential_3} since we have 
\begin{equation*}
\Phi_\eps(t,x) - \Phi (t,x)= -\delta \left[\Delta \omega^S \star \omega^S \star (\varrho_\eps -\varrho)(t,x)\right] + O( \eps),
\end{equation*}
so that, thanks to the above control of the term $O(\eps)$ and properties of the convolution, 
\begin{equation} \label{eq: boundedness_potential_4}
\left\| \Phi_\eps - \Phi \right\|_{L^{p}_{t}L^{\infty}_{x}}\leq C \left\| \varrho_\eps -\varrho \right\|_{L^{p}_{t}L^{1}_{x}} + C \eps.
\end{equation}
Using Lemma~\ref{prop:rho_strongly_L1}, we obtain the result. 
\end{proof}

\begin{proof}[Proof of Lemma \ref{prop:rho_strongly_L1}]  
This result is a consequence of the compactness averaging lemma in kinetic theory \cite{GLPS88,PSaveraging}. Here, we use the following variant from  \cite[Lemma 4.2]{MR2299429}.

\begin{lemma}\label{lem:masmoudi}
Assume that $\{h^{\eps}\}$ is bounded in $L^{2}_{t,x,\xi}$, $\{h_{0}^{\eps}\}$ and $\{h_{1}^{\eps}\}$ are bounded in $L^{1}_{t,x,\xi}$. Moreover, suppose that
$$
\eps\partial_{t}h^{\eps}+\xi\cdot\nabla_{x}h^{\eps}=h_{0}^{\eps}+\nabla_{\xi}\cdot h_{1}^{\eps}. 
$$
\noindent Then, for all $\psi\in \mathcal{C}_{0}^{\infty}(\R^{d})$,
$$
\left\|\int_{\R^{d}}(h^{\eps}(t,x+y,\xi)-h^{\eps}(t,x,\xi))\, \psi(\xi)\diff\xi\right\|_{L^{1}_{t,x}}\to 0, 
$$
when $y\to 0$ uniformly in $\eps$.
\end{lemma}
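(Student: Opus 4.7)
The plan is to prove this velocity-averaging lemma by Fourier analysis in $(t,x)$, adapted to the rescaled symbol $\eps\tau+\xi\cdot\zeta$. After a standard mollification reduction, I would take the Fourier transform in $(t,x)$ of the equation, obtaining the pointwise algebraic identity
\begin{equation*}
    i(\eps\tau+\xi\cdot\zeta)\,\hat h^\eps(\tau,\zeta,\xi)=\hat h_0^\eps(\tau,\zeta,\xi)+\nabla_\xi\cdot\hat h_1^\eps(\tau,\zeta,\xi),
\end{equation*}
so that the spatial shift $h^\eps(t,x+y,\xi)-h^\eps(t,x,\xi)$ becomes, on the Fourier side, multiplication of the $\psi$-average by $e^{iy\cdot\zeta}-1$. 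Introducing two parameters $R>0$ and $\delta>0$, I would split the $(\tau,\zeta,\xi)$-domain into a low-frequency part $\{|\zeta|\le R\}$, an off-resonance part $\{|\zeta|>R,\ |\eps\tau+\xi\cdot\zeta|>\delta|\zeta|\}$, and a resonant strip $\{|\zeta|>R,\ |\eps\tau+\xi\cdot\zeta|\le\delta|\zeta|\}$.

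On the low-frequency region, the inequality $|e^{iy\cdot\zeta}-1|\le|y|R$ combined with the pointwise Cauchy--Schwarz bound $|\int\hat h^\eps\psi\,d\xi|\le\|\psi\|_{L^2_\xi}\|\hat h^\eps(\tau,\zeta,\cdot)\|_{L^2_\xi}$ (valid because $\psi$ has compact support) and Plancherel gives a contribution of order $|y|R$ in $L^2_{t,x}$, hence locally in $L^1_{t,x}$. On the off-resonance part I would invert the symbol $1/[i(\eps\tau+\xi\cdot\zeta)]$, integrate by parts in $\xi$ to push $\nabla_\xi$ onto $\psi$, and use $|\eps\tau+\xi\cdot\zeta|>\delta|\zeta|$ together with the $L^1_{t,x,\xi}$ bounds on $h_0^\eps,h_1^\eps$ to obtain a contribution of order $(R\delta)^{-1}$. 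On the resonant strip, the $\xi$-Lebesgue measure of the strip intersected with $\mathrm{supp}\,\psi$ is at most $C_\psi\delta$ uniformly in $(\tau,\zeta)$, so Cauchy--Schwarz in $\xi$ with the $L^2_{t,x,\xi}$ bound on $h^\eps$ gives a contribution of order $\delta^{1/2}$. Balancing by sending $R\to\infty$, then $\delta\to 0$, then $|y|\to 0$, in that order, yields the desired equicontinuity.

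The main obstacle will be reconciling the $L^1$ bounds on $h_0^\eps,h_1^\eps$ (poorly adapted to $L^2$-Fourier methods, since their Fourier transforms live only in $L^\infty$) with the stated conclusion in $L^1_{t,x}$ on the whole space. I would address this by a quantitative truncation $h_i^\eps=h_i^{\eps,K}+r_i^{\eps,K}$ in which the bounded parts $h_i^{\eps,K}\in L^\infty\cap L^1$ are controlled by the $L^2$-Fourier argument above, while the small $L^1$-remainders are handled by a direct Duhamel representation along the characteristics $(x,\xi)\mapsto(x-(t-s)\xi/\eps,\xi)$: the change of variables $\eta=(t-s)\xi/\eps$ on the compact support of $\psi$ precisely transforms the velocity average of the $L^1$-source into a spatial averaging in $\eta$, yielding translation-continuity in $L^1_{t,x}$ uniformly in $\eps$.
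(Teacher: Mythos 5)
First, a remark on what ``the paper's own proof'' is here: the paper does not prove this lemma at all --- it is quoted as a known result from \cite[Lemma 4.2]{MR2299429} --- so your attempt can only be measured against the standard proof of that cited result. Your Fourier part is the classical Golse--Lions--Perthame--Sentis mechanism adapted to the symbol $\eps\tau+\xi\cdot\zeta$, and it is essentially sound where it applies: in particular your key uniformity observation is correct, namely that the resonant set $\{|\eps\tau+\xi\cdot\zeta|\le \delta|\zeta|\}$ is, for each fixed $(\tau,\zeta)$, a slab of width $2\delta$ in the direction $\zeta/|\zeta|$, so its intersection with $\mathrm{supp}\,\psi$ has $\xi$-measure $O(\delta)$ \emph{regardless} of the shift $\eps\tau/|\zeta|$; this is exactly what makes the $\eps\p_t$ scaling harmless and the estimate uniform in $\eps$.

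The genuine gap is in your treatment of the $L^1$ sources, which is precisely the point of this variant of the averaging lemma (otherwise the paper could have cited the classical $L^2$ statement). (a) Your truncation $h_i^{\eps}=h_i^{\eps,K}+r_i^{\eps,K}$ with $\|r_i^{\eps,K}\|_{L^1}$ small \emph{uniformly in $\eps$} requires equi-integrability of $\{h_i^{\eps}\}$, which is not among the hypotheses: an $L^1$-bounded family may concentrate, and then $\sup_{\eps}\|h_i^{\eps}\mathds{1}_{\{|h_i^{\eps}|>K\}}\|_{L^1}$ does not tend to $0$ as $K\to\infty$. (b) The fallback Duhamel argument cannot repair this. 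The representation along characteristics carries the factor $\f{1}{\eps}\int_0^t\cdots\diff s$, so the velocity average of an $L^1$ source $g$ has $L^1_{t,x}$ norm of order $T\|g\|_{L^1}/\eps$: your change of variables $\eta=(t-s)\xi/\eps$ normalizes the kernel in $\eta$ but does not remove this $1/\eps$. (c) Even setting that aside, after this change of variables a spatial shift by $y$ induces a shift of size $\eps|y|/(t-s)$ in the \emph{velocity} argument of the source, and an $L^1$-bounded family has no uniform modulus of translation continuity in $\xi$; indeed, if $L^1$ bounds alone yielded uniform translation continuity of the averages, your truncation step would have been unnecessary in the first place. (d) Duhamel from $t=0$ also uses the trace $h^{\eps}(0,\cdot,\cdot)$, which is not controlled by an $L^2_{t,x,\xi}$ bound. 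The standard repair --- and the route behind the cited proof --- is the damped resolvent identity $h^{\eps}=(\lambda+\eps\p_t+\xi\cdot\nabla_x)^{-1}\bigl[\lambda h^{\eps}+h_0^{\eps}+\nabla_\xi\cdot h_1^{\eps}\bigr]$: the $\lambda h^{\eps}$ part is in $L^2$ and is handled by your Fourier argument, while the kernel $\f{1}{\eps}e^{-\lambda s/\eps}$ has mass $1/\lambda$, so the velocity average of the $L^1$ part is $O(1/\lambda)$ in $L^1_{t,x}$ uniformly in $\eps$ --- and then its translation difference is trivially $O(1/\lambda)$ with no continuity of the source required --- up to a term $\nabla_x\cdot B_\lambda$ with $\|B_\lambda\|_{L^1}=O(\lambda^{-2})$, produced by the chain rule when commuting $\nabla_\xi$ with the characteristics, which needs separate (e.g.\ mollification) care. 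Finally, note that your $L^2$-based pieces control only $L^1_{\mathrm{loc}}$ in $x$, whereas the stated conclusion is global in $L^1_{t,x}$, so some tightness input is needed at that point as well.
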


To prove Lemma~\ref{prop:rho_strongly_L1}, we cannot apply this averaging lemma directly on $\{f_{\eps}\}$ because $\{f_{\eps}\}$ is not bounded in $L^{2}_{t,x,\xi}$ and we follow the argument in~\cite{MR2664460} which follows idea of renormalized solutions~\cite{DLBoltzmann89}. We fix $\nu > 0$ and we consider the functions 
$\beta_{\nu}(f)=\frac{f}{1+\nu f}$ with derivative $\beta_{\nu}'(f)=\frac{1}{(1+\nu f)^{2}}$.
Now we multiply \eqref{eq:V} by $\beta_{\nu}'(f)$ and obtain
$$
\eps\p_{t}\beta_{\nu}(f_{\eps})+\xi\cdot\nabla_{x}\beta_{\nu}(f_{\eps})=\f{(\varrho_{\eps}M-f)\beta_{\nu}'(f)}{\eps}-\nabla_{\xi}\cdot(F_{\eps}\beta_{\nu}(f_{\eps})).
$$
We verify assumptions of Lemma~\ref{lem:masmoudi}. From~\ref{thm_eps:est1} we see that $h^{\eps}=\beta_{\nu}(f_{\eps})$ is bounded in $L^{1}_{t,x,\xi} \cap L^{\infty}_{t,x,\xi}$ and hence in $L^{2}_{t,x,\xi}$ by interpolation. The $L^{1}_{t,x,\xi}$ bound on $h_{0}^{\eps}=\f{(\varrho_{\eps}M-f)\beta_{\nu}'(f_\eps)}{\eps}$ is deduced from~\ref{thm_eps:est5} and the $L^{\infty}_{t,x,\xi}$ bound on $\beta_{\nu}'(f_\eps)$. Finally, since $F_{\eps}$ is bounded in $L^{\infty}_{t,x}$ and $\beta_{\nu}(f_\eps)$ is bounded in $L^{1}_{t,x,\xi}$ we see that $h_{1}^{\eps}=-F_{\eps}\beta_{\nu}(f_\eps)$ is bounded in $L^{1}_{t,x,\xi}$.\\

\noindent The assumptions of Lemma \ref{lem:masmoudi} are satisfied and we obtain 

$$\left\|\int_{\R^{d}}(\beta_{\nu}(f_{\eps})(t,x+y,\xi)-\beta_{\nu}(f_{\eps})(t,x,\xi))\,\psi(\xi)\diff\xi\right\|_{L^{1}_{t,x}}\to 0,$$
when $y\to 0$, uniformly in $\eps$. As this is true for all $\nu>0$, Lemma \ref{lem:compact_beta} implies
\begin{equation}\label{eq:compactness_space_psi}
\left\|\int_{\R^{d}} (f_{\eps}(t,x+y,\xi)-f_{\eps}(t,x,\xi))\, \psi(\xi)\diff\xi\right\|_{L^{1}_{t,x}}\to 0,
\end{equation}
when $y\to 0$, uniformly in $\eps$.
\\

\noindent The final step is to remove the weight $\psi$ in the convergence \eqref{eq:compactness_space_psi} using uniform bound on $\{f_\eps \, |\xi|^2\}$. To this end, consider a sequence of functions $\{\psi_{n}(\xi)\}_{n}$ in $\mathcal{D}(\R^{d})$ such that $\psi_{n}(\xi)=1$ for $|\xi|\leq n$ and $\psi_n(\xi) = 0$ for $|\xi|\geq n+1$. Then,
$$
\left\|\int_{\R^{d}} (f_{\eps}(t,x,\xi) (1-\psi_n(\xi))\diff\xi\right\|_{L^{1}_{t,x}}   \leq \left\|\int_{|\xi|\geq n} f_{\eps}(t,x,\xi)\, \frac{|\xi|^2}{n^2}  \diff\xi\right\|_{L^{1}_{t,x}} \leq \frac{\| f_\eps |\xi|^2 \|_{L^1_{t,x,\xi}}}{n^2}
$$
and similarly for the term with $f_\eps(t, x+y, \xi)$.
Hence, we may choose first $n$ large enough and then for such $n$ apply \eqref{eq:compactness_space_psi} to deduce
\begin{equation}\label{eq:compactness_space}
\|\varrho_\eps(x+y) - \varrho_\eps(x)\|_{L^1_{t,x}} = \left\|\int_{\R^{d}} (f_{\eps}(t,x+y,\xi)-f_{\eps}(t,x,\xi)) \diff\xi\right\|_{L^{1}_{t,x}}\to 0,
\end{equation}
when $|y|\to 0$, uniformly in $\varepsilon>0$. This yields compactness in space.\\

\noindent From Lemma~\ref{lem:frechet-kolmogorov} we know that $\{\varrho_\eps\}$ is also compact in time, and as a result
\begin{equation*}
\begin{split}
&\int_{0}^{T-h} \hspace{-3pt} \int_{\R^d}  |\varrho_\eps(t+h,x+k) -\varrho_\eps(t,x)| \diff x \diff t \\
& \leq \int_{0}^{T-h} \hspace{-3pt}\int_{\R^d} |\varrho_\eps(t+h,x+k)-\varrho_\eps(t+h,x)| \diff t \diff x + \int_{0}^{T-h} \hspace{-3pt} \int_{\R^d} |\varrho_\eps(t+h,x)-\varrho_\eps(t,x)| \diff t \diff x \leq \theta(h,k),
\end{split}
\end{equation*}
where $\theta(h,k)\to 0$ whenever $|h|, |k| \to 0$ uniformly in $\eps$. This provides the equicontinuity of $\{\varrho_\eps\}$ in $L^1_{t,x}$ which provides us with local compactness in $x$. 

From \ref{thm_eps:est7} in Theorem~\ref{thm:bounds_epsilonto0} we know that
\begin{equation*}
 \sup_{0< \eps <1} \int_{(0,T)\times \R^d} |x \varrho_\eps(t,x)| \diff t \diff x<\infty,
\end{equation*}
and we obtain the strong convergence of the density in $L^{1}_{t,x}$ by Fréchet-Kolmogorov theorem, see also~\cite{MR916688}. Using Estimate~\ref{thm_eps:est1} we obtain by interpolation and~\cite[Theorem 1]{MR916688} the strong convergence in $L^{p}_{t}L^{1}_{x}$ for every $1\le p<\infty$
and this concludes the proof of Lemma~\ref{prop:rho_strongly_L1}. 
\end{proof}

\begin{proof}[Proof of Lemma \ref{lem:xi_tensor_xi_term}]
We adapt the proof of Lemma~\ref{lem:pointwise_J}. We write
\begin{equation*}
\begin{split}
R_{\eps}&:= \left|\int_{\R^{d}}\xi\otimes\xi (f_{\eps}-\varrho_{\eps}M) \diff \xi\right|\le 
\int_{\R^{d}} |\xi|^{2}|f_{\eps}-\varrho_{\eps}M|\diff \xi 
\\
&\le \int_{\left\{\left|\log(\f{f_{\eps}}{\varrho_{\eps} M})\right|\ge\f{|\xi|^{2}}{r}\right\}}{|\xi|^{2}} \varrho_{\eps} M\left|\f{f_{\eps}}{\varrho_{\eps} M}-1\right|\diff\xi 
+ \int_{\left\{\left|\log(\f{f_{\eps}}{\varrho_{\eps} M})\right|\le\f{|\xi|^{2}}{r}\right\}}{|\xi|^{2}}\varrho_{\eps} M\left|\f{f_{\eps}}{\varrho_{\eps} M}-1\right|\diff\xi =I_{1}+I_{2},
\end{split}
\end{equation*}
where $r$ is chosen later. For the first term, we just write 
\[
I_{1}\le r \int_{\R^d} \log\left(\f{f_{\eps}}{\varrho_{\eps} M}\right)\varrho_{\eps} M\left|\f{f_{\eps}}{\varrho_{\eps} M}-1\right|\diff\xi \le r \eps^{2} \,  \|{\mathcal D}_\eps\|_{L^1_{\xi}}.
\]
The term $I_2$ is decomposed in two parts: where $f_{\eps} \geq \varrho_\eps M$ and $f_{\eps} < \varrho_\eps M$. The resulting integrals are called $I_{2}^A$ and $I_2^B$. We only discuss $I_2^A$ as $I_2^B$ can be treated similarly as it was discussed in Lemma~\ref{lem:pointwise_J}. We use the Cauchy-Schwarz inequality to obtain
\begin{multline*}
I_{2}^A\le \left(\int_{\left\{0 \leq \log(\f{f_{\eps}}{\varrho_{\eps} M})\le\f{|\xi|^{2}}{r}\right\}}  {|\xi|^{4}} \f{\varrho_{\eps} M}{B(\xi)}\diff\xi \right)^{1/2} \cdot \\ 
 \cdot \left(\int_{\left\{0 \leq \log(\f{f_{\eps}}{\varrho_{\eps} M})\le\f{|\xi|^{2}}{r}\right\}}\varrho_{\eps} M\left|\f{f_{\eps}}{\varrho_{\eps} M}-1\right|^{2}B(\xi)\diff\xi \right)^{1/2} =:  I_{2}^{A,1}\cdot I_{2}^{A,2},
\end{multline*}
where, as before,  $B(\xi) = \frac{\log(A)}{A-1} =\f{|\xi|^{2}}{r(\exp(\f{|\xi|^{2}}{r})-1)}$,  with $A=A(\xi):=\exp(\f{|\xi|^{2}}{r})$. As in the proof of Lemma~\ref{lem:pointwise_J}, we have the inequality $\log(y)\ge (y-1)\f{\log(A)}{A-1}$ which yields with  $y=\f{f_{\eps}}{\varrho_{\eps}M}$
\begin{equation*}
I_{2}^{A,2}\le \left(\int_{\left\{0\leq \log(\f{f_{\eps}}{\varrho_{\eps} M})\le\frac{|\xi|^2}{r}\right\}}\varrho_{\eps} M\left|\f{f_{\eps}}{\varrho_{\eps} M}-1\right|\log\left(\f{f_{\eps}}{\varrho_{\eps}M}\right)\diff\xi \right)^{1/2} \leq \varepsilon\, \|D_{\eps}\|_{L^1_{\xi}}^{1/2}.    
\end{equation*}
Now we choose $r$ such that $M(\xi)\exp(\f{|\xi|^{2}}{r})=C\exp(-a|\xi|^{2})$ for some $a>0$. Then, we have
$$
\int_{\R^d} {|\xi|^{4}} \f{ M(\xi) }{B(\xi)} \diff \xi \leq  r\int_{\R^d} |\xi|^2\, M(\xi)\, \exp\left(\f{|\xi|^{2}}{r}\right)  \diff \xi 
\leq Cr \int_{\R^d} |\xi|^2\exp(-a|\xi|^{2}) \diff \xi =:C^2.
$$
It follows that $I_{2}^{A,1}\le C \varrho_{\eps}^{1/2} $.

Finally we get
\[
R_{\eps} \leq r \eps^{2} \,  \|{\mathcal D}_\eps\|_{L^1_{\xi}} + C\eps  \varrho_{\eps}^{1/2}\|D_{\eps}\|_{L^1_{\xi}}^{1/2}
\]
and, using the Cauchy-Schwarz inequality, the proof of Lemma \ref{lem:xi_tensor_xi_term} is concluded.

\end{proof}

This also concludes the proof of Theorem \ref{thm:limit_eps}.


\section{Conclusion} \label{conclusion}

We proved that macroscopic densities $\{\varrho_{\eps}\}$ formed from solutions of the Vlasov-Cahn-Hilliard equation \eqref{eq:V} converge to the solutions of non-local degenerate Cahn-Hilliard \eqref{limit_equation}. It is an open question whether one can obtain a local version of this equation by sending short-range interaction kernel $\omega^S$ to the Dirac mass $\delta_0$. One expects in the limit the local degenerate Cahn-Hilliard equation:
\begin{equation}\label{eq:local_deg_CH}
\p_t \varrho -D\Delta \varrho - \dv (\varrho \nabla \Phi)=0
\end{equation}
where $\Phi = -\delta\Delta\varrho$.
One can try to perform this limit either on equation \eqref{limit_equation} or directly on \eqref{eq:V}, by sending $\omega_{\alpha}^L \weaks \delta_0$, $\omega^S \weaks \delta_0$ together, see Figure \ref{fig:results}. Passing from \eqref{eq:V} to \eqref{eq:local_deg_CH}, the main difficulty is the lack of entropy which gives integrability of second-order derivatives in the nondegenerate Cahn-Hilliard. On the other hand, when one tries to pass to the limit from \eqref{limit_equation} to \eqref{eq:local_deg_CH}, the entropy is available but it yields estimates only on
$$
\Delta (\varrho \star \omega^S) \mbox{ in } L^2_t L^2_x, \qquad \nabla \sqrt{\varrho} \mbox{ in } L^2_t L^2_x.
$$
The minimal required information allowing to pass to the limit seems to be strong compactness of $\{\nabla \varrho\}$ in $L^2_t L^2_x$.\\

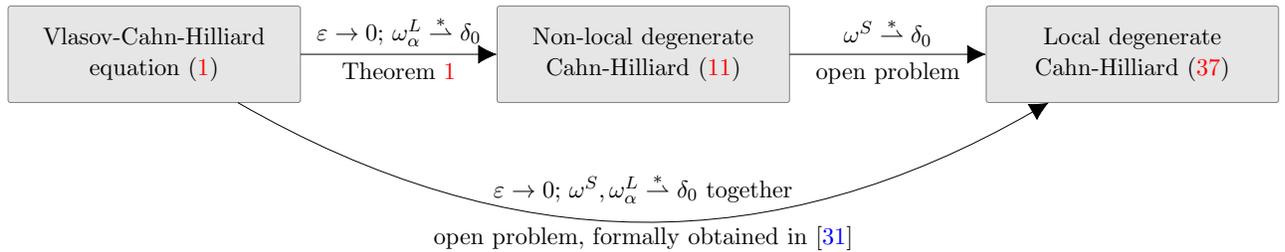
\begin{figure}[!htb]
\centering
\resizebox{\textwidth}{!}{%
\begin{tikzpicture}[node distance=7.8cm,auto]
\node [label] (init) {Vlasov-Cahn-Hilliard equation \eqref{eq:V}};
\node [label, right of=init] (node) {Non-local degenerate Cahn-Hilliard \eqref{limit_equation}};
\node [label, right of=node] (node1) {Local degenerate Cahn-Hilliard \eqref{eq:local_deg_CH}};

\draw[-{Latex[length=3mm, width=3mm]}] (init) -- node[below] {Theorem \ref{thm:limit_eps}} node[above] {$\eps\to 0$; $\omega_{\alpha}^L \weaks \delta_0$} (node);

\draw[-{Latex[length=3mm, width=3mm]}] (node) -- node[below] {open problem} node[above] {$\omega^S \weaks \delta_0$} (node1);

\draw[-{Latex[length=3mm, width=3mm]}] (init) to[bend right] node[midway,below,inner sep=2pt] {open problem, formally obtained in~\cite{takata2018simple}} node[midway,above,inner sep=8pt] {$\eps\to 0$; $\omega^S, \omega_{\alpha}^L \weaks \delta_0$ together} (node1);


\end{tikzpicture}
}

\caption{Relation between three types of the degenerate Cahn-Hilliard equations.}
\label{fig:results}
\end {figure}

\noindent Moreover, it is also open to prove whether we can add the "usual" double-well Cahn-Hilliard interaction potential in the system. In fact, as far as this modification is concerned, it is not even clear if there exists a solution to the Vlasov-Cahn-Hilliard equation when the potential $\Phi$ is a function of the density~$\varrho$. 

\section{Acknowledgments}
J.S. was supported by the National Agency of Academic Exchange project "Singular limits in parabolic equations" no. BPN/BEK/2021/1/00044. B.P. has received funding from the European Research Council (ERC) under the European Union's Horizon 2020 research and innovation programme (grant agreement No 740623).

\appendix

\section{Useful inequality and lower bound on the energy} \label{appendix_a}

We recall two lemmas which have been used in the proof of Theorem~\ref{thm:bounds_epsilonto0}. The first one is a variant of the Csiszar-Kullback inequality.

\begin{lemma}\label{lem:kullback_ineq}
Let $f, g \geq 0$ with $\|f\|_{1} = \|g\|_{1}$. Then,
$$
\|f-g\|_{1}^2 \leq  \|f\|_{1} \, \int_{\R^d} (f-g) \, (\log f - \log g)
$$
\end{lemma}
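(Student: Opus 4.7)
My plan is to prove this as a short direct consequence of the Cauchy--Schwarz inequality combined with the elementary logarithmic mean inequality. Notice first that the integrand $(f-g)(\log f - \log g)$ is pointwise nonnegative because $\log$ is monotone, so $f-g$ and $\log f - \log g$ share a sign. In particular the quotient $(f-g)/(\log f - \log g)$ is nonnegative, with the convention that it equals the common value $f=g$ at points where $f=g$.

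The first step is to write $|f-g|$ as the product
$$
|f-g| \;=\; \sqrt{\frac{f-g}{\log f - \log g}}\;\cdot\;\sqrt{(f-g)(\log f - \log g)}
$$
and apply the Cauchy--Schwarz inequality in $L^2(\mathbb{R}^d)$ to obtain
$$
\|f-g\|_1^{\,2} \;\leq\; \left(\int_{\mathbb{R}^d}\frac{f-g}{\log f - \log g}\,\mathrm{d}x\right)\,\left(\int_{\mathbb{R}^d}(f-g)(\log f - \log g)\,\mathrm{d}x\right).
$$
This reduces the problem to showing that the first factor on the right-hand side is bounded by $\|f\|_1$.

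The second (and essentially the only nontrivial) step is the logarithmic mean inequality: for any $a,b>0$,
$$
\frac{a-b}{\log a - \log b}\;\leq\;\frac{a+b}{2},
$$
with the limiting value $a$ when $a=b$. This is classical; I would justify it in one line, for instance by the substitution $a = b e^t$, which reduces it to the convexity inequality $\sinh(t/2)\leq (t/2)\cosh(t/2)$ for $t\geq 0$, or equivalently $(e^t-1)/t \leq (e^t+1)/2$. Applying this pointwise with $a=f(x)$ and $b=g(x)$ and integrating gives
$$
\int_{\mathbb{R}^d}\frac{f-g}{\log f - \log g}\,\mathrm{d}x \;\leq\;\frac{\|f\|_1+\|g\|_1}{2}\;=\;\|f\|_1,
$$
where we used the equal-mass hypothesis $\|f\|_1 = \|g\|_1$. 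Plugging this back into the Cauchy--Schwarz bound yields exactly the claimed inequality.

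The main (minor) obstacle is a measurability/integrability point: the Cauchy--Schwarz step requires that the factor $(f-g)/(\log f - \log g)$ be well defined and integrable. Points where $f=g$ are handled by the natural continuous extension, and on the set where $f$ or $g$ vanishes while the other does not, the integrand of the right-hand side of the lemma is $+\infty$ so the inequality is trivial; outside these negligible cases the logarithmic mean inequality gives an a priori $L^1$ bound by $(f+g)/2$, so no integrability problem arises. Hence the argument is complete.
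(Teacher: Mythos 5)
Your proof is correct, and it takes a genuinely different route from the paper. The paper's proof is a two-line reduction to a known black box: it invokes the classical Csisz\'ar--Kullback--Pinsker inequality $\|f-g\|_1^2 \leq 2\int_{\R^d} f\log(f/g)$ for unit-mass densities, adds to it the same inequality with $f$ and $g$ swapped (the left-hand side being symmetric) to produce exactly $\|f-g\|_1^2 \leq \int_{\R^d}(f-g)(\log f - \log g)$, and then obtains the stated factor $\|f\|_1$ by rescaling $f,g \mapsto f/c, g/c$. You instead give a self-contained proof from scratch: the weighted Cauchy--Schwarz factorization
\begin{equation*}
|f-g| = \sqrt{\frac{f-g}{\log f - \log g}}\,\sqrt{(f-g)(\log f - \log g)},
\end{equation*}
valid because monotonicity of $\log$ makes both factors nonnegative, combined with the arithmetic--logarithmic mean inequality $\frac{a-b}{\log a - \log b} \leq \frac{a+b}{2}$, whose integrated form uses the equal-mass hypothesis directly and so avoids any normalization step. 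This is in fact the standard textbook proof of the \emph{symmetrized} Pinsker inequality, so in a sense you have reproved the ingredient the paper cites rather than cited it. What each approach buys: the paper's version is shorter and appropriate for an appendix lemma, but it leans on Pinsker with its constant $2$, which itself requires a nontrivial proof; yours is elementary, yields the constant with no external input, and handles the degenerate sets ($f=g$, or one of $f,g$ vanishing, where the right-hand side is $+\infty$ and the claim is trivial) explicitly --- a point the paper's proof silently glosses over, since $\log(f/g)$ there also needs a convention on those sets. Your one-line justification of the mean inequality via $a = be^t$ and $\tanh(t/2)\le t/2$ (equivalently $\sinh(t/2)\leq (t/2)\cosh(t/2)$) is also correct.
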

The second lemma is used to control $f \log_- (f)$ from $f \log f$, which immediately establishes the Inequality~\eqref{eq:bound_neg_part_of_log}.
\begin{lemma}\label{lem:bound_energy}
Let $\log_-(f):=\max\{-\log(f),0\}$. Then
\begin{equation}\label{eq:bound_neg_part_of_log}
\int_{\R^{2d}} 2D\log_-(f_\eps(t)) f_\eps(t) \diff x \diff \xi \leq  C\left(\norm{\varrho_{\eps}}_{L^{1}_{t,x}}, \|x f^0\|_{L^{1}_{x,\xi}} \right) + \int_{\R^{2d}} \frac{|\xi|^{2}}{4} f_\eps(t) \diff \xi \diff x  + D\int_{0}^{t}||{\mathcal D}_\eps(s,\cdot,\cdot)||_{L^1_{x,\xi}} \diff s.
\end{equation}

\end{lemma}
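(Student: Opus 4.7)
The plan is a classical Maxwellian-weight argument for the negative part of $f_\eps \log f_\eps$: comparing $f_\eps$ to a well-chosen reference density produces moment terms, which are then controlled via the flux estimate of Proposition~\ref{lem:estimate_Jeps}.

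\textbf{Weight comparison.} I introduce the probability density $M_1(x,\xi) := C_0 \exp(-|\xi|^2/(8D) - |x|)$ with $C_0>0$ normalizing $\int M_1 = 1$. The elementary inequality $u\log(u/v) \geq u - v$ (valid for $u,v > 0$) applied pointwise with $u=f_\eps$, $v=M_1$ yields
\[-f_\eps \log f_\eps \leq M_1 - f_\eps + f_\eps\left(\frac{|\xi|^2}{8D} + |x| + |\log C_0|\right).\]
Since $f_\eps\log_-(f_\eps) = (-f_\eps\log f_\eps)_+$ and only the $-f_\eps$ term is negative on the right, taking positive parts and integrating in $(x,\xi)$ gives
\[\int f_\eps \log_-(f_\eps) \diff x \diff \xi \leq 1 + \frac{1}{8D}\int f_\eps|\xi|^2 \diff x \diff \xi + \int f_\eps|x| \diff x \diff \xi + |\log C_0|\,\|\varrho_\eps(t)\|_{L^1_x}.\]
Multiplying by $2D$ matches the $\tfrac14$ target coefficient for the kinetic moment; it remains to bound $2D\int f_\eps|x|$ by $D\int_0^t \|\mathcal{D}_\eps\|_{L^1_{x,\xi}}$ plus a constant depending on the allowed data.

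\textbf{$x$-moment via the flux.} Testing \eqref{eq:V} against $|x|$ (rigorously via a sequence of smooth truncations $\phi_R\to|x|$, $\nabla\phi_R\to x/|x|$), the collision term integrates to zero since $\int|x|\varrho_\eps M \diff x \diff \xi = \int|x|f_\eps \diff x \diff \xi$ (because $\int M(\xi)\diff\xi = 1$), and the force term vanishes after integration by parts in $\xi$ (as $F_\eps$ is $\xi$-independent). Thus
\[\eps^2 \frac{d}{dt}\int |x|f_\eps \diff x \diff \xi = \eps \int \frac{x}{|x|}\cdot\xi f_\eps \diff x \diff \xi = \eps^2 \int \frac{x}{|x|}\cdot J_\eps(t,x)\diff x,\]
using the identity $\int\xi f_\eps\diff\xi = \eps J_\eps$ from \eqref{eq:Mass}. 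The $\eps^2$ factors cancel, and after integration in time combined with Proposition~\ref{lem:estimate_Jeps} and Cauchy-Schwarz in time,
\[\int|x|f_\eps(t) \diff x \diff \xi \leq \|xf^0\|_{L^1_{x,\xi}} + \eps\,\|\mathcal{D}_\eps\|_{L^1_{t,x,\xi}} + C\,\|\varrho_\eps\|_{L^1_{t,x}}^{1/2}\,\|\mathcal{D}_\eps\|_{L^1_{t,x,\xi}}^{1/2}.\]

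\textbf{Closure via Young and main obstacle.} Applying $\sqrt{ab}\leq \eta a + b/(4\eta)$ to the cross term with $\eta>0$ chosen small so that $2D(\eps+\eta)\leq D$ (this is possible for $\eps$ suitably less than $1/2$; for $\eps$ closer to $1$ the same conclusion holds after an elementary adjustment of constants), I obtain
\[2D\int|x|f_\eps(t) \diff x \diff \xi \leq C\bigl(\|xf^0\|_{L^1_{x,\xi}},\|\varrho_\eps\|_{L^1_{t,x}}\bigr) + D\,\|\mathcal{D}_\eps\|_{L^1_{t,x,\xi}},\]
which substituted into the weight estimate of the first step yields exactly \eqref{eq:bound_neg_part_of_log}. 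The main obstacle is the sharp tracking of the coefficient in front of $\|\mathcal{D}_\eps\|$: it must not exceed $2D$, the coefficient produced by the raw energy identity, since otherwise combining the lemma with the energy fails to leave a net positive dissipation in~\eqref{eq:energy_pos_log}. The structural identity $\int\xi f_\eps\diff\xi = \eps J_\eps$ is essential, because a naive moment ODE on $\int|x|f_\eps$ would yield an $O(1/\eps)$ blow-up; it is precisely through Proposition~\ref{lem:estimate_Jeps} and the resulting decomposition $\eps\|\mathcal{D}_\eps\| + \|\varrho_\eps\|^{1/2}\|\mathcal{D}_\eps\|^{1/2}$ that Young's inequality can close the estimate with the correct constants.
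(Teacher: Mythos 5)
Your proposal is correct and follows essentially the same route as the paper: the paper also compares $f_\eps$ to the reference weight $\exp(-|x|/4-|\xi|^2/(8D))$ (via a two-domain splitting rather than your pointwise Gibbs inequality $u\log(u/v)\ge u-v$, a cosmetic difference), and it controls $\int_{\R^{2d}}|x|f_\eps$ in the identical way, through $\frac{d}{dt}\int_{\R^{2d}}|x|f_\eps \diff x \diff \xi=\int_{\R^d}\frac{x}{|x|}\cdot J_\eps \diff x$, Proposition~\ref{lem:estimate_Jeps} and Young's inequality. One precision on the point you flag: with your weight $\exp(-|x|)$ the closure genuinely fails for $\eps\in[1/2,1)$ and no tuning of the Young parameter $\eta$ alone repairs it, since the term $2D\eps\,\|{\mathcal D}_\eps\|_{L^1_{t,x,\xi}}$ already exceeds the allowed $D\,\|{\mathcal D}_\eps\|_{L^1_{t,x,\xi}}$; the ``elementary adjustment'' must be made in the reference density itself, replacing $|x|$ by $\lambda|x|$ with $\lambda\le 1/4$ (only the normalization $C_0$ changes), which makes the dissipation coefficient $2D\lambda(\eps+\eta)\le D$ for all $\eps\in(0,1)$ and is exactly the paper's choice $|x|/4$.
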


\begin{proof}[Proof of Lemma~\ref{lem:kullback_ineq}]
Let $\|f\|_{1} = \|g\|_{1} = 1$. Usual the Csiszar-Kullback inequality gives us
$$
\|f-g\|_{1}^2 \leq 2 \int_{\R^d} f \log \left( \frac{f}{g} \right).
$$
By symmetry of the (LHS) we have
$$
2 \|f-g\|_{1}^2 \leq 2 \int_{\R^d} f \log \left( \frac{f}{g} \right) + 2 \int_{\R^d} g \log \left( \frac{g}{f} \right) = 2 \int_{\R^d} (f-g) (\log f - \log (g)).
$$
The general case follows by rescaling.
\end{proof}

\begin{proof}[Proof of Lemma~\ref{lem:bound_energy}]

We proceed as in \cite[Proposition 5.1]{MR2664460}.

We divide the domain in two parts: 
$$
\Omega_1 := \left\{f_{\eps}>\exp\left({-\frac{|x|}{4}-\frac{|\xi|^{2}}{8D}}\right)\right\}, \qquad \qquad  
\Omega_2:= \left\{f_{\eps}\le \exp\left({-\frac{|x|}{4}-\frac{|\xi|^{2}}{8D}}\right)\right\},
$$
On $\Omega_1$, $\log_-(f_{\eps})$ is bounded so that we have
$$
f_{\eps}\log_-(f_{\eps})\le \left(\frac{|x|}{4}+\frac{|\xi|^{2}}{8D}\right)f_{\eps}, 
$$
while on $\Omega_2$, $f_{\eps} \leq 1$ so that $\sqrt{f_{\eps}}\log_-(f_{\eps})$ is bounded by some constant $C$. Hence, 
$$
f_{\eps}\log_-(f_{\eps})\le C\sqrt{f_{\eps}}\le C\, \exp\left({-\frac{|x|}{8}-\frac{|\xi|^{2}}{16D}}\right).
$$
It follows that
\begin{equation}\label{eq:log-ff_first_estimate}
\int_{\R^{2d}} \log_-(f_\eps(t)) f_\eps(t) \diff x \diff \xi \leq  \int_{\R^{2d}}C \exp\left({-\frac{|x|}{8}-\frac{|\xi|^{2}}{16D}}\right) +\left(\frac{|x|}{4}+\frac{|\xi|^{2}}{8D}\right) f_\eps(t) \diff \xi \diff x.
\end{equation}
Now, we only need to bound the term $\int_{\R^{2d}} \frac{|x|}{4} f_{\varepsilon}(t) \diff \xi \diff x$. For this, we first observe that
\[
\frac{\diff}{\diff t} \int_{\R^{d}} {|x|} f_{\varepsilon}(t) \diff \xi = \frac{1}{\varepsilon} \int_{\R^{d}} f_{\varepsilon}(t) \frac{x}{|x|} \xi \diff \xi = \frac{x}{|x|} J_{\varepsilon} \leq 2 \|{\mathcal D}_\eps(s,\cdot,\cdot)\|_{L^1_{\xi}}+ C \varrho_{\eps},
\]
where we have used Proposition~\ref{lem:estimate_Jeps} and Young's inequality (with  $\varepsilon\leq 1 $). Therefore, for all $t \geq 0$
\begin{equation}\label{eq:bound_f_|x|}
\int_{\R^{2d}} {|x|} f_{\varepsilon}(t) \diff \xi \diff x \leq \int_{\R^{2d}} {|x|} f^0 \diff \xi \diff x +C \norm{\varrho_{\eps}}_{L^{1}_{t,x}} + 2 \int_{0}^{t} \|{\mathcal D}_\eps(s,\cdot,\cdot)\|_{L^1_{x,\xi}} \diff s.
\end{equation}
Finally, equation \eqref{eq:log-ff_first_estimate} simplifies to give the desired result~\eqref{eq:bound_neg_part_of_log}.

\end{proof}

\section{Criteria for compactness}
\label{app:compactness}

\begin{lemma}[Compactness of $\beta_\nu(f_n)$ implies compactness of $f_n$]\label{lem:compact_beta}
Let $\{f_n(t,x,\xi)\}$ be a sequence such that $\{f_n\}$ and $\{f_n \log{f_n}\}$ are bounded in $L^1_{t,x,\xi}$. Let $\psi(\xi) \in C_c^{\infty}(\R^d)$. Suppose that for all $\nu>0$ and all $\varepsilon>0$, there exists $\delta(\nu, \varepsilon)$ such that,whenever $|y| \leq \delta(\nu, \varepsilon)$, 
$$
\left\| \int_{\R^d} (\beta_{\nu}(f_n(t,x+y,\xi)) - \beta_\nu(f_n(t,x,\xi)))\, \psi(\xi) \diff \xi \right\|_{L^1_{t,x}} \leq \varepsilon.
$$ 
Then, for all $\varepsilon>0$ there exists $\delta(\varepsilon)>0$ such that
$$
\left\| \int_{\R^d} (f_n(t,x+y,\xi) - f_n(t,x,\xi))\, \psi(\xi) \diff \xi \right\|_{L^1_{t,x}} \leq \varepsilon.
$$
\end{lemma}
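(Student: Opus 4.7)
The plan is to use the triangle inequality with $\beta_\nu(f_n)$ as an intermediate approximant, choosing $\nu$ small enough to make the errors $f_n - \beta_\nu(f_n)$ uniformly small in $L^1_{t,x,\xi}$, and then applying the hypothesis for this fixed $\nu$. Concretely, for any $\nu > 0$ and $y \in \R^d$, I decompose
\begin{align*}
f_n(t,x+y,\xi) - f_n(t,x,\xi) &= \bigl[f_n(t,x+y,\xi) - \beta_\nu(f_n(t,x+y,\xi))\bigr] \\
&\quad + \bigl[\beta_\nu(f_n(t,x+y,\xi)) - \beta_\nu(f_n(t,x,\xi))\bigr] \\
&\quad + \bigl[\beta_\nu(f_n(t,x,\xi)) - f_n(t,x,\xi)\bigr],
\end{align*}
integrate against $\psi(\xi)$, take the $L^1_{t,x}$ norm, and bound the first and third contributions by $\|\psi\|_\infty \, \| f_n - \beta_\nu(f_n) \|_{L^1_{t,x,\xi}}$ using translation invariance.

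The key step is the de la Vall\'ee-Poussin type estimate: for every $\eta > 0$, there exists $\nu_0(\eta) > 0$ such that for all $\nu \in (0, \nu_0]$ and all $n$,
\[
\| f_n - \beta_\nu(f_n) \|_{L^1_{t,x,\xi}} \leq \eta.
\]
Indeed, a direct computation gives $f_n - \beta_\nu(f_n) = \nu f_n^2 / (1 + \nu f_n)$. Splitting the integration domain according to a threshold $K > 1$ to be chosen, on $\{f_n \leq K\}$ I use the pointwise bound $\nu f_n^2/(1+\nu f_n) \leq \nu K f_n$, which integrates to at most $\nu K \| f_n \|_{L^1_{t,x,\xi}}$; on $\{f_n > K\}$, I have $\log f_n > \log K > 0$, so $\nu f_n^2/(1+\nu f_n) \leq f_n \leq f_n \log f_n / \log K$, which integrates to at most $\| f_n \log f_n \|_{L^1_{t,x,\xi}} / \log K$. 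Choosing $K$ large first and then $\nu$ small enough makes the sum of both contributions smaller than $\eta$, uniformly in $n$.

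With this in hand, given $\varepsilon > 0$, I fix $\nu$ so that the first and third terms in the decomposition contribute at most $\varepsilon/3$ each in the target $L^1_{t,x}$ norm, uniformly in $n$ and $y$. For that fixed $\nu$, the hypothesis provides $\delta(\varepsilon) := \delta\bigl(\nu, \varepsilon/3\bigr)$ such that for $|y| \leq \delta(\varepsilon)$ the middle term is at most $\varepsilon/3$, which concludes the proof by the triangle inequality. The only mildly technical step is the uniform smallness of $f_n - \beta_\nu(f_n)$; the rest of the argument is formal and fits the pattern of renormalized-solution arguments as in~\cite{DLBoltzmann89}.
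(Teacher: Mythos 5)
Your proposal is correct and takes essentially the same route as the paper's own proof: the identical triangle-inequality decomposition through $\beta_\nu(f_n)$, followed by the same de la Vall\'ee-Poussin type splitting of $f_n - \beta_\nu(f_n) = \nu f_n^2/(1+\nu f_n)$ at a threshold (your $K$, the paper's $M$), with the bound $\nu K f_n$ below the threshold and $f_n \log f_n/\log K$ above it, using the uniform $L^1_{t,x,\xi}$ bounds on $\{f_n\}$ and $\{f_n \log f_n\}$. The choice of constants ($K$ large, then $\nu$ small, then $\delta(\nu,\varepsilon/3)$ from the hypothesis) also matches the paper exactly.
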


\begin{proof}
First, we observe that
$$
|\beta_\nu(s) - s| \leq \left| \frac{s}{1+s\,\nu} - s \right| = \frac{\nu s^2}{1+\nu\,s} \leq \mbox{min}(\nu\,s^2, s).
$$
Therefore, for $M$ and $\nu$ to be chosen later
\begin{align*}
&\left\| \int_{\R^d} (f_n(t,x+y,\xi) - \beta_{\nu}(f_n(t,x+y,\xi)))\, \psi(\xi) \diff \xi \right\|_{L^1_{t,x}} \leq \\ 
& \qquad \quad \leq
\|\psi\|_{\infty}\, \nu \int_{f_n(t,x+y,\xi) \leq M} f_n^2(t,x+y,\xi) \diff \xi \diff x \diff t+ \|\psi\|_{\infty}\, \int_{f_n(t,x+y,\xi) \geq M} f_n(t,x+y,\xi) \diff \xi \diff x \diff t\\
& \qquad \quad \leq \|\psi\|_{\infty}\, \nu \, M \, \|f_n\|_{1} + \|\psi\|_{\infty} \, \frac{\|f_n \log f_n \|_{1}}{\log M}.
\end{align*}
Similarly,
$$
\left\| \int_{\R^d} (f_n(t,x,\xi) - \beta_{\nu}(f_n(t,x,\xi)))\, \psi(\xi) \diff \xi \right\|_{L^1_{t,x}} \leq 
\|\psi\|_{\infty}\, \nu \, M \, \|f_n\|_{1} + \|\psi\|_{\infty} \, \frac{\|f_n \log f_n \|_{1}}{\log M}.
$$
Let $\varepsilon > 0$. First, we choose $\nu$ and $M$ such that 
$$
\|\psi\|_{\infty}\, \nu \, M \, \|f_n\|_{1} + \|\psi\|_{\infty} \, \frac{\|f_n \log f_n \|_{1}}{\log M} \leq \frac{\varepsilon}{3}.
$$
Then, we take $\delta(\nu, \varepsilon/3)$ such that
$$
\left\| \int_{\R^d} (\beta_{\nu}(f_n(t,x+y,\xi)) - \beta_\nu(f_n(t,x,\xi)))\, \psi(\xi) \diff \xi \right\|_{L^1_{t,x}} \leq \varepsilon/3
$$
when $|y| \leq \delta(\nu, \varepsilon/3)$. The conclusion follows by the triangle inequality.
\end{proof}

\begin{lemma}
\label{lem:frechet-kolmogorov}
The sequence $\{\varrho_{\eps}\}$ from Lemma \ref{prop:rho_strongly_L1} is compact in time, i.e. 
\begin{equation*}
\lim_{|h|\to 0}\int_{0}^{T-h}\int_{\R^{d}}|\varrho_{\eps}(t+h,x)-\varrho_{\eps}(t,x)|\diff x \diff t = 0\quad \text{uniformly in $\eps$}.    
\end{equation*}
\end{lemma}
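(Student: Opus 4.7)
The plan is to combine the mass balance equation \eqref{eq:Mass}, the uniform $L^1_{t,x}$ bound on $\{J_\eps\}$ from Theorem~\ref{thm:bounds_epsilonto0}\ref{thm_eps:est6}, and the spatial equicontinuity \eqref{eq:compactness_space} already established in the proof of Lemma~\ref{prop:rho_strongly_L1}, through a standard Aubin-Lions style regularization. Because $J_\eps$ is controlled only in $L^1$, we cannot directly conclude that $\p_t \varrho_\eps$ lies in a space yielding time compactness, so we first regularize in the space variable.

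First, I introduce a nonnegative smooth mollifier $\chi_\delta(x) = \delta^{-d} \chi(x/\delta)$ on $\R^d$ with $\int \chi_\delta = 1$ and set $\varrho_\eps^\delta := \varrho_\eps \star \chi_\delta$. The decomposition
\begin{equation*}
\varrho_\eps(t+h,\cdot) - \varrho_\eps(t,\cdot) = \bigl(\varrho_\eps - \varrho_\eps^\delta\bigr)(t+h,\cdot) \;+\; \bigl(\varrho_\eps^\delta(t+h,\cdot) - \varrho_\eps^\delta(t,\cdot)\bigr) \;+\; \bigl(\varrho_\eps^\delta - \varrho_\eps\bigr)(t,\cdot)
\end{equation*}
splits the task into two independent pieces. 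For the mollification defect, Fubini's theorem and \eqref{eq:compactness_space} give
\begin{equation*}
\|\varrho_\eps - \varrho_\eps^\delta\|_{L^1_{t,x}} \;\le\; \int_{\R^d} \chi_\delta(y)\, \|\varrho_\eps(\cdot,\cdot) - \varrho_\eps(\cdot,\cdot - y)\|_{L^1_{t,x}}\, \diff y \;\xrightarrow[\delta\to 0]{}\; 0,
\end{equation*}
uniformly in $\eps$. For the middle piece, convolving $\p_t \varrho_\eps = -\dv J_\eps$ componentwise with $\chi_\delta$ moves the derivative onto the mollifier, namely $\p_t \varrho_\eps^\delta = -\sum_i J_\eps^i \star \p_i \chi_\delta$, so that
\begin{equation*}
\int_0^{T-h}\!\!\int_{\R^d} |\varrho_\eps^\delta(t+h,x) - \varrho_\eps^\delta(t,x)| \diff x \diff t \;\le\; \|\nabla \chi_\delta\|_{L^1} \int_0^{T-h}\!\!\int_t^{t+h} \|J_\eps(s,\cdot)\|_{L^1_x} \diff s \diff t \;\le\; \frac{C h}{\delta}\, \|J_\eps\|_{L^1_{t,x}}.
\end{equation*}

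Given $\eta>0$, I first pick $\delta$ small enough that the two mollification defect terms together contribute at most $2\eta/3$ (uniformly in $\eps$), then pick $|h|$ small enough that the middle term contributes at most $\eta/3$. This gives the desired time equicontinuity in $L^1_{t,x}$ uniformly in $\eps$.

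The only subtle point is that one might hope to avoid the regularization and directly bound $\p_t \varrho_\eps$ in some negative Sobolev space in $x$, but the mere $L^1_{t,x}$ bound on $J_\eps$ is too weak for that, and there is no better estimate available here. The trick is precisely to trade the low integrability of $J_\eps$ against the already established spatial equicontinuity \eqref{eq:compactness_space}, so that the two ingredients combine to yield full compactness.
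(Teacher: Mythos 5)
Your proof is correct and takes essentially the same route as the paper: the same three-term mollification decomposition, the same use of the spatial equicontinuity \eqref{eq:compactness_space} for the two defect terms, and the same bound $Ch\delta^{-1}\|J_\eps\|_{L^1_{t,x}}$ on the mollified time increment obtained from $\p_t\varrho_\eps=-\dv J_\eps$. The only cosmetic difference is that the paper makes the coupling explicit by taking $\delta=h^{1/2}$, whereas you choose $\delta$ first and then $|h|$, which is equivalent.
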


\noindent The proof of this lemma uses a sequence $(\varphi_{\delta})_{\delta>0}\in C_{c}^{\infty}(\R^{d})$ of standard mollifiers with mass~1 such that $\varphi_{\delta}(x)=\frac{1}{\delta^{d}}\varphi(\frac{x}{\delta})$ with $\varphi$ of mass 1 and compactly supported. Moreover
\begin{equation*}
 \|\nabla^{k}\varphi_{\delta}\|_{L^{1}(\R^{d})}\le\frac{C}{\delta^{k}},
\end{equation*}
and for any function $g\in L^{p}(\R^{d})$,
\begin{equation*}
\|g\star\varphi_{\delta}\|_{L^{p}(\R^{d})}\le \|\varphi_{\delta}\|_{L^{1}(\R^{d})}\|g\|_{L^{p}(\R^{d})}.	
\end{equation*}

\begin{proof}
We know that 
$$
\p_{t}\varrho_{\eps}+\nabla\cdot J_{\eps}=0
$$
where $J_{\eps}$ is bounded uniformly in $L^{1}_{t,x}$, see Proposition~\ref{lem:estimate_Jeps}. \\

\noindent Using the mollifiers with $\delta$ depending on $h$ to be specified later on, we first notice that
\begin{align*}
\int_{0}^{T-h}\int_{\R^{d}}|\varrho_{\eps}(t+h,x)-\varrho_{\eps}(t,x)|\diff x \diff t&\le \int_{0}^{T-h}\int_{\R^{d}}|\varrho_{\eps}(t,x)-\varrho_{\eps}(t,\cdot)\star\varphi_{\delta}(x)|\diff x \diff t\\
&+\int_{0}^{T-h}\int_{\R^{d}}|\varrho_{\eps}(t+h,x)-\varrho_{\eps}(t+h,\cdot)\star\varphi_{\delta}(x)|\diff x \diff t\\
&+\int_{0}^{T-h}\int_{\R^{d}}|\varrho_{\eps}(t+h,\cdot)\star\varphi_{\delta}(x)-\varrho_{\eps}(t,\cdot)\star\varphi_{\delta}(x)|\diff x \diff t.
\end{align*}
For the first and second terms, the computations are the same, hence, we only present it for the first term. Using the properties of the mollifiers and the compactness of $\varrho_{\eps}$ in space, we want to prove that
\begin{equation*}
\int_{0}^{T-h}\int_{\R^{d}}|\varrho_{\eps}(t,x)-\varrho_{\eps}(t,\cdot)\star\varphi_{\delta}(x)|\diff x \diff t\le  \theta(\delta).
\end{equation*}
where $\theta(\delta)\to 0$ when $\delta\to 0$ uniformly in $\eps$. We write 
\begin{equation*}
\int_{0}^{T-h}\int_{\R^{d}}|\varrho_{\eps}(t,x)-\varrho_{\eps}(t,\cdot)\star\varphi_{\delta}(x)|\diff x \diff t=\int_{0}^{T-h}\int_{\R^{d}}\left|\int_{\R^{d}}\varphi(y)(\varrho_{\eps}(t,x)-\varrho_{\eps}(t,x-\delta y))\diff y\right|\diff x \diff t.
\end{equation*}
Then we use Fubini's theorem and the fact that $\varphi$ is compactly supported in some compact set $K$ we obtain
\begin{equation*}
\int_{0}^{T-h}\int_{\R^{d}}\left|\int_{\R^{d}}\varphi(y)(\varrho_{\eps}(t,x)-\varrho_{\eps}(t,x-\delta y))\diff y\right|\diff x \diff t \le \int_{K}\|\tau_{\delta y}\varrho_{\eps}-\varrho_{\eps}\|_{L^{1}((0,T)\times\R^{d})}\diff y.
\end{equation*}
where $\tau_{x}$ is the translation operator in $x$ variable. Now we use the compactness in space obtained in~\eqref{eq:compactness_space}, so that
\begin{equation*}
\int_{K}\|\tau_{\delta y}\varrho_{\eps}-\varrho_{\eps}\|_{L^{1}((0,T)\times\R^{d})}\diff y\le |K|\sup_{y\in K}\|\tau_{\delta y}\varrho_{\eps}-\varrho_{\eps}\|_{L^{1}((0,T)\times\R^{d})}\le \theta(\delta).     
\end{equation*}
Therefore the first and the second term are bounded by $\theta(\delta)$ where $\theta(\delta)\to 0$ when $\delta \to 0$ uniformly in $\eps$. It remains to study the third term. The third term reads
\begin{align*}
\int_{0}^{T-h}\int_{\R^{d}}|\varrho_{\eps}(t+h,\cdot)\star\varphi_{\delta}(x)-\varrho_{\eps}(t,\cdot)\star\varphi_{\delta}(x)|\diff x \diff t&= \int_{0}^{T-h}\int_{\R^{d}}\left|\int_{t}^{t+h}\p_{t}\varrho_{\eps}(s,\cdot)\star\varphi_{\delta}(x)ds\right|\diff x \diff t\\
=\int_{0}^{T-h}\int_{\R^{d}}\left|-\sum_{i=1}^{d}\int_{t}^{t+h}J_{i}\star\p_{i}\varphi_{\delta}(s,x)ds\right|\diff x \diff t
&\le\sum_{i=1}^{d}\int_{0}^{T-h}\int_{\R^{d}}\int_{t}^{t+h}\left|J_{i}\star\p_{i}\varphi_{\delta}(s,x)\right|ds\diff x \diff t,
\end{align*}
where we used $J_{\eps}=(J_{i})_{i=1,...,d}$.  We perform the change of variables $v=\f{s-t}{h}$, use Fubini's theorem and obtain  

\begin{equation*}
 \int_{0}^{T-h}\int_{\R^{d}}\int_{t}^{t+h}\left|J_{i}\star\p_{i}\varphi_{\delta}(s,x)\right|ds\diff x \diff t=h    \int_{0}^{1}\int_{\R^{d}}\int_{0}^{T-h}\left|J_{i}\star\p_{i}\varphi_{\delta}(vh+t,x)\right|\diff t\diff x \diff v.
\end{equation*}
Then we use the change of variables $\tau=vh+t$ and obtain 
\begin{align*}
 h\int_{0}^{1}\int_{\R^{d}}\int_{0}^{T-h}\left|J_{i}\star\p_{i}\varphi_{\delta}(vh+t,x)\right|\diff t\diff x \diff v=   h\int_{0}^{1}\int_{\R^{d}}\int_{vh}^{T+h(v-1)}\left|J_{i}\star\p_{i}\varphi_{\delta}(\tau,x)\right|d\tau \diff x \diff v \le \f{h}{\delta}\|J_{\eps}\|_{L^{1}_{t,x}}.
\end{align*}
Using the $L^{1}_{t,x}$ bound on $J_{\eps}$ and taking $\delta=h^{1/2}$ we conclude. 

\end{proof}

\section{Uniqueness in $L^{\infty}$}
\label{app:uniqueness}

Let $d\geq3$. We are interested in the uniqueness of these solutions in the class of functions such that 
\begin{equation}\label{eq:class_for_uniqueness_slns}
\varrho \in L^{\infty}_{t,x} \cap L^{\infty}_t L^1_{x} \cap C^w_t L^1_x
\end{equation}
where $C^w_t L^1_x$ denotes the space of weakly continuous in time functions with values in $L^1_x$ . In this class, the definition of distributional solutions of Theorem~\ref{thm:limit_eps} can be formulated as follows: for every test function $\varphi\in C_{c}^{\infty}([0,T)\times\R^{d})$ we have, with 
\begin{equation*}
\begin{split}
-\int_{\R^{d}}\varrho^{0}\varphi(0,x)\diff x 
- \int_{0}^{T}\int_{\R^{d}}\varrho \p_{t}\varphi\diff x\diff t =
D\int_{0}^{T}\int_{\R^{d}}\varrho\Delta\varphi\diff x\diff t
-&\int_{0}^{T}\int_{\R^{d}}\varrho\nabla\Phi(\varrho)\cdot\nabla\varphi\diff x\diff t,   
\end{split}
\end{equation*}
where $\Phi(\varrho)=-\delta\Delta(\omega^{S}\star\omega^{S}\star\varrho)$ and $\varrho\in L^{\infty}_{t}L^{1}_{x}$.

By interpolation $\varrho$ belongs to every $L^{p}_{t,x}$, $1\le p\le\infty$ and so is $\nabla\Phi(\rho)$. Therefore this formulation implies

\begin{equation}\label{eq:weak_form_integrated_time_space}
\begin{split}
\int_{0}^{T}\langle \p_{t}\varrho,\varphi\rangle =
D\int_{0}^{T}\int_{\R^{d}}\varrho\Delta\varphi\diff x\diff t
-&\int_{0}^{T}\int_{\R^{d}}\varrho\nabla\Phi(\varrho)\cdot\nabla\varphi\diff x\diff t,   
\end{split}
\end{equation}

for every $\varphi \in L^1_t W^{1,1}_x \cap L^1_t \dot{H}_x^2$ where $\langle \cdot,\cdot\rangle$ denotes the dual pairing between $\dot{H}^{-2}$ and $\dot{H}^{2}$. 

Let $\varrho_{1},\varrho_{2}$ be two solutions as above with same initial data which satisfy $\varrho_{1},\varrho_{2}\in L^{\infty}_{t,x}$. The goal is to prove that $\varrho_{1}=\varrho_{2}$. We substract Equation~\eqref{eq:weak_form_integrated_time_space} for $\varrho_{2}$ and $\varrho_{1}$. Writing $\varrho=\varrho_{2}-\varrho_{1}$, we obtain 
\begin{equation}\label{eq:uniqueness}
\begin{split}
\int_{0}^{T}\langle \p_{t}\varrho,\varphi\rangle= D\int_{0}^{T}\int_{\R^{d}}\varrho\Delta\varphi\diff x \diff t-\int_{0}^{T}\int_{\R^{d}}&\varrho\nabla\Phi(\varrho_{2})\cdot\nabla\varphi\diff x \diff t \diff t-\int_{0}^{T}\int_{\R^{d}}\varrho_{1}\nabla\Phi(\varrho)\cdot\nabla\varphi\diff x \diff t. 
\end{split}
\end{equation}

We want to test \eqref{eq:uniqueness} with  $\varphi(t)=-\mathcal{N}\ast\varrho$ where $\mathcal{N}$ is the Newtonian potential so that $-\Delta \varphi = \varrho$. This is an admissible test function. Indeed, $\partial_{x_i, x_j} \varphi \in L^{\infty}_{t} L^2_{x}$ by the Calderon-Zygmund theory cf. \cite[Theorem 3.5, Chapter 3]{MR1616087}. Moreover, as $\nabla \mathcal{N} \in L^{\frac{d}{d-1},\infty}$ (i.e. weak $L^p$ spaces) we can use Young's convolutional inequality to deduce
$$
\|\nabla \varphi\|_{L^{\infty}_t L^2_x} \leq C\|\nabla \mathcal{N}\|_{L^{\frac{d}{d-1},\infty}} \|\varrho \|_{L^{\infty}_t L^{\frac{2d}{d+2}}_x} .
$$
Finally, $\varphi \in L^{\infty}_{t,x}$ cf. \cite[Lemma 1]{MR794002}. Therefore, testing \eqref{eq:uniqueness} with $\varphi$ we obtain 
\begin{equation*}
\f{1}{2} \int_{\R^{d}}|\nabla\varphi(T)|^{2}\diff x+D\int_0^T\int_{\R^{d}}\varrho^{2}=-\int_0^T\int_{\R^{d}}\varrho\nabla\Phi(\varrho_{2})\cdot\nabla\varphi -\int_0^T\int_{\R^{d}}\varrho_{1}\nabla\Phi(\varrho)\cdot\nabla\varphi. 
\end{equation*}

We denote by $I_{1}$ and $I_{2}$ the two terms of the right-hand side. Using $-\Delta \varphi = \varrho$ and the formula $\Delta\varphi\nabla\varphi=\nabla\cdot(\nabla\varphi\otimes\nabla\varphi)-\f{1}{2}\nabla|\nabla\varphi|^{2}$ we obtain
$$
I_1 = \int_0^T \int_{\R^d} \Delta \varphi \nabla \varphi \cdot \nabla \Phi(\varrho_2) \le C\int_0^T \int_{\R^d}  |D^{2}\Phi(\varrho_{2})||\nabla\varphi|^{2} \leq C \int_0^T \int_{\R^d} |\nabla\varphi|^{2}.
$$

as $|D^2 \Phi(\varrho_2)|$ can be bounded as in Lemma \ref{lem:potential_boundedness} only in terms of $\|\varrho_2\|_{L^{\infty}_{t,x}}$. For $I_{2}$ we recall that $\varrho_{1}$ is bounded in $L^{\infty}_{t,x}$. Using the Cauchy-Schwarz inequality it remains 
to see that $\norm{\nabla \Phi(\varrho)}_{L^{2}}\le C\norm{\nabla\varphi}_{L^{2}}$ which can be achieved by definition of $\Phi(\varrho)$ and $\varphi$ and the fact that convolutions commute with derivatives. Therefore 
\begin{equation*}
I_{2}\le C\norm{\nabla\varphi}_{L^{2}_{t,x}}^{2}.
\end{equation*}
Combining the previous results we obtain 
\begin{equation*}
\norm{\nabla\varphi(T,\cdot)}_{L^{2}}^{2} \le C \int_{0}^T \norm{\nabla\varphi}_{L^{2}}^{2},
\end{equation*}
so that $\norm{\nabla\varphi}_{L^{2}}^{2} = 0$ and the proof is concluded.

\section{Estimate on $J_{\varepsilon} \log^{1/2} \log^{1/2} \max(J_{\varepsilon},e)$}
\label{ap:loglog}
From Lemma~\ref{lem:pointwise_J} we recall that for $0<r\le 1$
\begin{equation*}
|J_{\eps}(s,x)|\le r \eps  \|{\mathcal D}_\eps(s, x,\cdot)\|_{L^1_{\xi}}+C\f{1}{r^{d}}\exp\left(\f{2C_{M}}{r^{2}}\right)\varrho_{\eps}(s,x)^{1/2}\|{\mathcal D}_\eps(s, x,\cdot)\|_{L^1_{\xi}}^{1/2}.
\end{equation*}
We can make further simplifications: applying a simple rescaling of $r$, ignoring $\varepsilon$, estimating $\frac{1}{r^d} \leq \exp(\frac{1}{r^d})$ and changing $r= \frac{1}{\alpha}$ we can assume
\begin{equation}\label{eq:estimate_J_with_alpha}
|J_{\eps}(s,x)|\le \frac{C}{\alpha} \|{\mathcal D}_\eps(s, x,\cdot)\|_{L^1_{\xi}}+C \exp\left(\alpha^2\right)\varrho_{\eps}(s,x)^{1/2}\|{\mathcal D}_\eps(s, x,\cdot)\|_{L^1_{\xi}}^{1/2}.
\end{equation}
To choose the best $\alpha$ in the inequality above, we let $u = \varrho_{\eps}$, $v = \|{\mathcal D}_\eps(s, x,\cdot)\|_{L^1_{\xi}}$ so that we can estimate
\begin{equation}\label{eq:Jeps_estimate_interms_ofuv}
|J_{\varepsilon}(s,x)| \leq C\, v \min_{1<\alpha < \infty} \left[\frac{1}{\alpha} + \exp\left({\alpha^2}\right) \sqrt{\frac{u}{v}}\, \right].
\end{equation}
\begin{lemma}\label{lem:minimum_in_estimate_Jeps}
Let $v \geq e$, $u \geq 0$, $v > e^2 \, u$. The minimum in \eqref{eq:Jeps_estimate_interms_ofuv} is attained for $\alpha > 1$ which is the unique solution of
$$
2 \,\alpha^3\, \exp(\alpha^2) = \sqrt{\frac{v}{u}}.
$$
For such $\alpha > 1$ we have
$$
v  \left[\frac{1}{\alpha} + \exp\left({\alpha^2}\right) \sqrt{\frac{u}{v}}\, \right] = v \left[\frac{1}{\alpha} + \frac{1}{2\alpha^3}\right] \leq \frac{2v}{\alpha}.
$$
 Then,
\begin{equation}\label{eq:cases_estimate_J_with_minimization}
\frac{2v}{\alpha} \leq \begin{cases}
\frac{2\sqrt{2}\,v}{\log^{1/2}_+\log^{1/2}_+ v} &\mbox{ if } v \geq u \log^{1/2}_+\ v,\\
2\,{u \log_+^{1/2} v} &\mbox{ if } v < u \log^{1/2}_+ v.
\end{cases}
\end{equation}
\end{lemma}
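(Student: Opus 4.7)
My approach is elementary calculus applied to the one-variable function
\[
g(\alpha) := \frac{1}{\alpha} + \exp(\alpha^2)\,\sqrt{u/v}, \qquad \alpha > 1.
\]
I would first compute $g'(\alpha) = -\alpha^{-2} + 2\alpha\exp(\alpha^2)\sqrt{u/v}$ and observe that $\phi(\alpha) := 2\alpha^{3}\exp(\alpha^{2})$ is strictly increasing on $(0,\infty)$ with $\phi(1)=2e$ and $\phi(\alpha)\to+\infty$. Hence, once $\sqrt{v/u}$ exceeds $2e$ (which is essentially the content of the hypothesis $v > e^{2}u$ together with $v\geq e$), the critical equation $\phi(\alpha)=\sqrt{v/u}$ has a unique root $\alpha^* > 1$, and the sign of $g'$ (negative on $(1,\alpha^*)$, positive on $(\alpha^*,\infty)$) shows that $\alpha^*$ is the unique global minimizer of $g$ on $(1,\infty)$.

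Next, substituting the critical identity $\exp((\alpha^*)^2)\sqrt{u/v}=1/(2(\alpha^*)^3)$ back into $g$ yields
\[
g(\alpha^*) = \frac{1}{\alpha^*} + \frac{1}{2(\alpha^*)^3} \leq \frac{2}{\alpha^*},
\]
the inequality being immediate from $\alpha^* > 1$. Multiplying by $v$ then produces the first displayed estimate in the lemma.

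To obtain the case split in \eqref{eq:cases_estimate_J_with_minimization} it remains to estimate $2v/\alpha^*$ in terms of $u$ and $v$. In the easy case $v < u\,\log_+^{1/2}v$, I would simply use $\alpha^*>1$ to get $2v/\alpha^* < 2v \leq 2u\log_+^{1/2}v$. The other case $v \geq u\,\log_+^{1/2}v$ is where the technical work sits: setting $\beta := (\alpha^*)^2$, the critical equation reads $\beta + \tfrac{3}{2}\log\beta = \tfrac{1}{2}\log(v/u) - \log 2$. From $\log\beta\geq 0$ one reads off the cheap upper bound $\beta \leq \tfrac{1}{2}\log(v/u)$; plugging this into the $\log\beta$ term on the left and using $\log(v/u) \geq \tfrac{1}{2}\log_+\log_+ v$ (the case hypothesis), a one-step bootstrap then gives $\beta \geq \tfrac{1}{4}\log_+\log_+ v$ up to lower-order terms. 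Using the identity $\log_+^{1/2}\log_+^{1/2}v = \tfrac{1}{\sqrt 2}(\log_+\log_+ v)^{1/2}$, this translates into $2v/\alpha^* \leq 2\sqrt{2}\,v/\log_+^{1/2}\log_+^{1/2}v$, as required.

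The main obstacle is keeping the constants honest in Case 1's bootstrap, since only the leading asymptotic $\alpha^* \sim \sqrt{\tfrac12 \log(v/u)}$ of the implicit equation is obvious, and the prefactor $2\sqrt 2$ in the target bound must be extracted cleanly. A second, minor point is the degeneracy of $\log_+\log_+ v$ when $v$ is close to $e$; this regime should be handled separately, by observing that the right-hand side of the target inequality is effectively infinite there so the bound follows at once from the trivial bound $\alpha^*>1$. Everything else is routine once the critical-point analysis is in place.
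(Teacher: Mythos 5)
Your overall skeleton coincides with the paper's: the same critical-point computation for $g(\alpha)=\frac{1}{\alpha}+e^{\alpha^2}\sqrt{u/v}$, the same back-substitution giving $g(\alpha^*)=\frac{1}{\alpha^*}+\frac{1}{2(\alpha^*)^3}\le\frac{2}{\alpha^*}$, the same case split, and the identical trivial treatment of the case $v<u\log_+^{1/2}v$. The first genuine gap is the location of the minimizer. You need $\sqrt{v/u}>\phi(1)=2e$ for the root of $\phi(\alpha)=2\alpha^3e^{\alpha^2}=\sqrt{v/u}$ to lie in $(1,\infty)$, and you assert this is ``essentially the content'' of the hypotheses; it is not: $v>e^2u$ gives only $\sqrt{v/u}>e<2e$, and for instance $v/u=9$ puts the unique critical point near $\alpha\approx 0.88<1$, so on the constrained range $(1,\infty)$ the function $g$ is increasing and your sign analysis does not apply. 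This borderline regime must be treated separately (it is easy: there $v/u\le 4e^2$, so under the first case hypothesis $v\ge u\log_+^{1/2}v$ one gets $\log_+^{1/2}v\le v/u\le 4e^2$, hence $\log_+\log_+v$ is bounded by an absolute constant and the trivial bound on $\alpha$ suffices at the price of a larger constant). To be fair, the paper glosses the same point: its display \eqref{eq:estimate_alpha_big} asserts $\frac{1}{2}\log(v/u)\le\alpha^2$, whereas taking logarithms in $\sqrt{v/u}\le e^{2\alpha^2}$ honestly yields only $\frac{1}{2}\log(v/u)\le 2\alpha^2$, i.e.\ $\alpha>1/\sqrt{2}$ under $v>e^2u$.

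The second, more substantial gap is that your hard case is not closed. The bootstrap on $\beta+\frac{3}{2}\log\beta=\frac{1}{2}\log(v/u)-\log 2$ only produces $\beta\ge\frac{1}{4}\log_+\log_+v$ ``up to lower-order terms'', and you yourself flag extracting the constant as the main obstacle --- but that quantitative bound is precisely the content of \eqref{eq:cases_estimate_J_with_minimization}; moreover the correction $\frac{3}{2}\log\beta$ is not uniformly lower order over the admissible range of $(u,v)$ (for moderate $x=\frac{1}{2}\log(v/u)$, say $x=2$, the quantity $x-\log 2-\frac{3}{2}\log x$ is a small fraction of $x$), so no definite constant comes out of your sketch without further case analysis. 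The paper avoids the bootstrap altogether with one observation your proposal is missing: $2\alpha^3\le e^{\alpha^2}$ for every $\alpha>0$, hence $\sqrt{v/u}=2\alpha^3e^{\alpha^2}\le e^{2\alpha^2}$ and $\alpha^2\ge\frac{1}{4}\log(v/u)$ in one line; combined with the case hypothesis in the form $\log_+(v/u)\ge\log_+\log_+^{1/2}v=\frac{1}{2}\log_+\log_+v$ (valid for $v\ge e$), this yields $\alpha\ge\frac{1}{2}\log_+^{1/2}\log_+^{1/2}v$ and so $\frac{2v}{\alpha}\le\frac{4v}{\log_+^{1/2}\log_+^{1/2}v}$ directly --- constant $4$ rather than the stated $2\sqrt{2}$, which in the paper rests on the factor-2 slip noted above; the precise constant is immaterial for the application in Appendix~\ref{ap:loglog}. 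Replacing your bootstrap by this exponential-domination bound and adding the borderline-threshold discussion would make the argument complete; your separate handling of $v$ near $e$, where $\log_+\log_+v$ degenerates, is fine as stated.
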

\begin{proof}
The first statement is a consequence of simple calculus and we only have to prove that the minimum is attained for $\alpha > 1$. This follows from
\begin{equation}\label{eq:estimate_alpha_big}
\sqrt{\frac{v}{u}} = 2\alpha^3 \exp(\alpha^2) \leq \exp(2 \alpha^2) \implies \frac{1}{2} \log \left( \frac{v}{u} \right) \leq \alpha^2.
\end{equation}
As $v > e^2\,u$, we deduce $\alpha > 1$.\\

We proceed to the estimates on $\frac{v}{2\alpha}$. Suppose that $v \geq u \log^{1/2}_+ v$. Then, we have
$$
\log v \geq \log u  + \log\log^{1/2}_+ v \implies \log^{1/2}_+\left(\frac{v}{u}\right) \geq {\log_+^{1/2}\log^{1/2}_+ v}
$$
(we use here $\frac{v}{u}>e^2$ and $v > e$ to write $\log_+$ instead of $\log$).
In view of \eqref{eq:estimate_alpha_big}, this gives lower bound on $\alpha$ which implies
$$
\frac{2v}{\alpha} \leq \frac{2\sqrt{2}v}{\log_+^{1/2}\log^{1/2}_+ v}.
$$
We are left with the case $v < u \log_+^{1/2} v$. In this case we estimate directly using $\alpha > 1$:
$$
\frac{2v}{\alpha} \leq 2v \leq 2\,{u \log_+^{1/2} v}.
$$
\end{proof}

We proceed to estimating $J_{\varepsilon} \log^{1/2} \log^{1/2} \max (J_{\varepsilon},e)$ in $L^1_{t,x}$. Let us observe that we can always restrict the set of integration to the points $(t,x)$ where $\|\mathcal{D}_{\varepsilon}\|_{L^1_{\xi}}$ is arbitrarily large. Indeed, given $M\geq e$, we estimate
\begin{align*}
\int_0^T\int_{\R^d}& J_{\varepsilon} \log^{1/2} \log^{1/2} \max(J_{\varepsilon},e) \leq \\ \leq &  \int_0^T\int_{\R^d} J_{\varepsilon} \log^{1/2} \log^{1/2} \max(J_{\varepsilon},e) \mathds{1}_{J_{\varepsilon} \leq M} + 
\int_0^T\int_{\R^d} J_{\varepsilon} \log^{1/2} \log^{1/2} \max(J_{\varepsilon},e) \, \mathds{1}_{\|\mathcal{D}_{\varepsilon}\|_{L^1_{\xi}} \leq e^2 \varrho_{\varepsilon}} \\
&+ 
\int_0^T\int_{\R^d} J_{\varepsilon} \log^{1/2} \log^{1/2} \max(J_{\varepsilon},e) \, \mathds{1}_{\|\mathcal{D}_{\varepsilon}\|_{L^1_{\xi}} > e^2 \varrho_{\varepsilon}} \, \mathds{1}_{ J_{\varepsilon} > M}.
\end{align*}
The first integral is bounded by $\|J_{\varepsilon}\|_{L^1_{t,x}} \, \log^{1/2} \log^{1/2} M$. For the second integral, we note that \eqref{eq:estimate_J_with_alpha} implies that $J_{\varepsilon} \leq C\, \varrho_{\varepsilon}$ so this integral is finite because we can use Young's
inequality and $\log x \leq x$
to get
$$
\varrho_{\varepsilon} \log^{1/2} \log^{1/2} \max( \varrho_{\varepsilon}, e) \leq 
\varrho_{\varepsilon} + \frac{1}{2} \varrho_{\varepsilon} \log\max( \varrho_{\varepsilon}, e).
$$
In the third integral, by estimate \eqref{eq:estimate_J_with_alpha} with $\alpha = 2$, we have $\|\mathcal{D}_{\varepsilon}\|_{L^1_{\xi}} \geq \frac{M}{C}$ for some constant $C$. It follows that $\|\mathcal{D}_{\varepsilon}\|_{L^1_{\xi}} $ can be assumed to be arbitrarily large by taking sufficiently large $M$. This allows us to apply Lemma~\ref{lem:minimum_in_estimate_Jeps}. \\

\noindent Splitting the domain of integration for two subsets as in Lemma \ref{lem:minimum_in_estimate_Jeps}, it is sufficient to prove that the following functions
\begin{align*}
P^1_{\varepsilon} &:= \frac{\|\mathcal{D}_{\varepsilon}\|_{L^1_{\xi}}}{\log_+^{1/2}\log_+^{1/2}\|\mathcal{D}_{\varepsilon}\|_{L^1_{\xi}}} \log_+^{1/2} \log_+^{1/2} \left(\frac{\|\mathcal{D}_{\varepsilon}\|_{L^1_{\xi}}}{\log_+^{1/2}\log_+^{1/2}\|\mathcal{D}_{\varepsilon}\|_{L^1_{\xi}}} \right), \\
P^2_{\varepsilon} &:= \varrho_{\varepsilon} \, \log_{+}^{1/2} \|\mathcal{D}_{\varepsilon}\|_{L^1_{\xi}}\,  \log_+^{1/2}\log_+^{1/2} \left(\varrho_{\varepsilon} \, \log_{+}^{1/2} \|\mathcal{D}_{\varepsilon}\|_{L^1_{\xi}}\right).
\end{align*}
are bounded in $L^1_{t,x}$ (here, we use that $\log_+^{1/2} \log_+^{1/2} v = \log^{1/2} \log^{1/2} \max(v,e)$). \\

For $P^1_{\varepsilon}$ (this is the limiting case!), we restrict to the values of $\|\mathcal{D}_{\varepsilon}\|_{L^1_{\xi}}$ so large that $\log_+^{1/2}\log_+^{1/2}\|\mathcal{D}_{\varepsilon}\|_{L^1_{\xi}} > 1$. Then,
$$
\log_+^{1/2} \log_+^{1/2} \left(\frac{\|\mathcal{D}_{\varepsilon}\|_{L^1_{\xi}}}{\log_+^{1/2}\log_+^{1/2}\|\mathcal{D}_{\varepsilon}\|_{L^1_{\xi}}} \right) \leq \log_+^{1/2} \log_+^{1/2} \left(\|\mathcal{D}_{\varepsilon}\|_{L^1_{\xi}} \right)
$$
so that $P^1_{\varepsilon} \leq \|\mathcal{D}_{\varepsilon}\|_{L^1_{\xi}}$.\\

For $P^2_{\varepsilon}$, we apply $\log x \leq x$, $\sqrt{x+y}\leq \sqrt{x}+\sqrt{y}$ and $2\,x\,y \leq {x^2 + y^2}$ to get
\begin{align*}
P^2_{\varepsilon} \leq \varrho_{\varepsilon} \, \log_{+}^{1/2} &\|\mathcal{D}_{\varepsilon}\|_{L^1_{\xi}}\,  \log_+^{1/2} \left(\varrho_{\varepsilon} \, \log_{+}^{1/2} \|\mathcal{D}_{\varepsilon}\|_{L^1_{\xi}}\right) \leq \\ &\leq
\varrho_{\varepsilon} \, \log_{+}^{1/2} \|\mathcal{D}_{\varepsilon}\|_{L^1_{\xi}}\,  \log_+^{1/2} \varrho_{\varepsilon} +
\varrho_{\varepsilon} \, \log_{+}^{1/2} \|\mathcal{D}_{\varepsilon}\|_{L^1_{\xi}}\,  \log_+^{1/2} \log_{+}^{1/2} \|\mathcal{D}_{\varepsilon}\|_{L^1_{\xi}}\\
&\leq \varrho_{\varepsilon} \log_+ \varrho_{\varepsilon}  + 
\varrho_{\varepsilon} \log_+\|\mathcal{D}_{\varepsilon}\|_{L^1_{\xi}} + \varrho_{\varepsilon} \log_+\|\mathcal{D}_{\varepsilon}\|_{L^1_{\xi}}
\end{align*}
so it is sufficient to prove that $\varrho_{\varepsilon} \, \log_{+} \|\mathcal{D}_{\varepsilon}\|_{L^1_{\xi}}$ is bounded in $L^1_{t,x}$. This follows from Fenchel-Young's inequality
$$
\varrho_{\varepsilon} \, \log_{+} \|\mathcal{D}_{\varepsilon}\|_{L^1_{\xi}} \leq
\varrho_{\varepsilon} \log \varrho_{\varepsilon} + \varrho_{\varepsilon}
+
\|\mathcal{D}_{\varepsilon}\|_{L^1_{\xi}}.
$$

%
%
%


\bibliographystyle{siam}  \bibliography{biblio}
\end{document}